\documentclass[11pt]{article}

\usepackage{amsmath, amsfonts, amssymb, amsthm,  graphicx, enumerate}
\usepackage[letterpaper, left=1truein, right=1truein, top = 1truein, bottom = 1truein]{geometry}

\usepackage[numbers, square]{natbib}

\usepackage[
            CJKbookmarks=true,
            bookmarksnumbered=true,
            bookmarksopen=true,
            colorlinks=true,
            citecolor=red,
            linkcolor=blue,
            anchorcolor=red,
            urlcolor=blue,
            ]{hyperref}

\usepackage[usenames,dvipsnames]{color}
\usepackage{xcolor}

\usepackage[ruled, vlined, lined, commentsnumbered]{algorithm2e}

\usepackage{prettyref,soul,xspace}

\usepackage{tikz}
\usepackage{pgfplots,makecell}

\newcommand{\bem}{\begin{bmatrix}}
\newcommand{\eem}{\end{bmatrix}}

\newcommand{\reals}{{\mathbb{R}}}

\newcommand{\naturals}{{\mathbb{N}}}



\newcommand{\Expect}{\mathbb{E}}

\newcommand{\Prob}{\mathbb{P}}



\newcommand{\argmin}{\mathop{\rm argmin}}
\newcommand{\argmax}{\mathop{\rm argmax}}


\newcommand{\Th}{{^{\rm th}}}


\theoremstyle{remark}
\newtheorem{remark}{Remark}
\theoremstyle{plain}

\newtheorem{lemma}{Lemma}
\newtheorem{theorem}{Theorem}

\theoremstyle{definition}

\newtheorem{example}{Example}



\theoremstyle{plain}

\newrefformat{eq}{(\ref{#1})}
\newrefformat{chap}{Chapter~\ref{#1}}
\newrefformat{sec}{Section~\ref{#1}}
\newrefformat{algo}{Algorithm~\ref{#1}}
\newrefformat{fig}{Fig.~\ref{#1}}
\newrefformat{tab}{Table~\ref{#1}}
\newrefformat{rmk}{Remark~\ref{#1}}
\newrefformat{clm}{Claim~\ref{#1}}
\newrefformat{def}{Definition~\ref{#1}}
\newrefformat{cor}{Corollary~\ref{#1}}
\newrefformat{lmm}{Lemma~\ref{#1}}
\newrefformat{lemma}{Lemma~\ref{#1}}
\newrefformat{prop}{Proposition~\ref{#1}}
\newrefformat{app}{Appendix~\ref{#1}}
\newrefformat{ex}{Example~\ref{#1}}
\newrefformat{exer}{Exercise~\ref{#1}}
\newrefformat{soln}{Solution~\ref{#1}}
\newrefformat{cond}{Condition~\ref{#1}}

\newcommand{\lunder}[1]{{\underset{\raise0.3em\hbox{$\smash{\scriptscriptstyle-}$}}{#1}}}

\newcommand{\floor}[1]{{\left\lfloor {#1} \right \rfloor}}
\newcommand{\ceil}[1]{{\left\lceil {#1} \right \rceil}}


\newcommand{\norm}[1]{\|{#1} \|}

\newcommand{\fnorm}[1]{\|#1\|_{\rm F}}

\newcommand{\opnorm}[1]{\|#1\|_{\rm op}}

\newcommand{\iprod}[2]{\left \langle #1, #2 \right\rangle}

\newcommand{\indc}[1]{{\mathbf{1}_{\left\{{#1}\right\}}}}

\def\innergetnumber#1[#2]#3{#2}
\def\getnumber{\expandafter\innergetnumber\jobname}



\newcommand{\bbQ}{{\mathbb{Q}}}

\newcommand{\calP}{{\mathcal{P}}}

\newcommand{\tz}{{\tilde{z}}}








\newcommand{\renyi}{R\'enyi\xspace}





\newcommand{\pth}[1]{\left( #1 \right)}
\newcommand{\qth}[1]{\left[ #1 \right]}
\newcommand{\sth}[1]{\left\{ #1 \right\}}








\newtheorem{corollary}{Corollary}

\newtheorem*{thma}{Condition N}

\newcommand{\E} {\mathbb{E}}
\newcommand{\p} {\mathbb{P}}

\DeclareMathOperator*{\ave}{\operatorname{ave}}

\newcommand{\dcn}{degree correction}

\newcommand{\folds}{M}
\newcommand{\nmin}{n_{\min}}
\newcommand{\nmax}{n_{\max}}
\newcommand{\ztrue}{z}


\title{
Community Detection in Degree-Corrected Block Models
}

\author{Chao Gao$^1$, Zongming Ma$^2$, Anderson Y.~Zhang$^1$ and Harrison H.~Zhou$^1$\\
~\\
$^1$Yale University and $^2$University of Pennsylvania
}
\date{~}

\begin{document}
\maketitle

\begin{abstract}
Community detection is a central problem of network data analysis. 
Given a network, the goal of community detection is to partition the network nodes into a small number of clusters, which could often help reveal interesting structures.
The present paper studies community detection in Degree-Corrected Block Models (DCBMs). 
We first derive asymptotic minimax risks of the problem for a misclassification proportion loss under appropriate conditions.
The minimax risks are shown to depend on degree-correction parameters, community sizes, and average within and between community connectivities in an intuitive and interpretable way. 
In addition,
we propose a polynomial time algorithm to adaptively perform consistent and even asymptotically optimal community detection in DCBMs.

\smallskip

\textbf{Keywords.} Clustering, Minimax rates, Network analysis, Spectral clustering, Stochastic block models.
\end{abstract}
 



\section{Introduction}
\label{sec:intro}

In many fields such as social science, neuroscience and computer science, it has become increasingly important to process and make inference on relational data.
The analysis of network data, a prevalent form of relational data, becomes an important topic for statistics and machine learning.
One central problem of network data analysis is \emph{community detection}: to partition the nodes in a network into subsets. 
A meaningful partition of nodes can often uncover interesting information that is not apparent in a complicated network. 

An important line of research on community detection is based on Stochastic Block Models (SBMs) \citep{holland83}.
For any $p\in [0,1]$, let $\text{Bern}(p)$ be the Bernoulli distribution with success probability $p$.
Under an SBM with $n$ nodes and $k$ communities, given a symmetric probability matrix $B = (B_{uv}) =B^T\in[0,1]^{k\times k}$ and a label vector $z = (z(1),\dots,z(n))^T\in[k]^n$, where $[k] = \{1,\dots, k\}$ for any $k\in \naturals$, 
its adjacency matrix $A = (A_{ij})\in \{0,1\}^{n\times n}$, with ones encoding edges, is assumed to be symmetric with zero diagonals and $A_{ij}=A_{ji}\stackrel{ind.}{\sim} \text{Bern}(B_{z(i)z(j)})$ for all $i>j$.
In other words, the probability of an edge connecting any pair of nodes only depends on their community memberships.
To date, researchers in physics, computer science, probability theory and statistics have gained great understanding on community detection in SBMs.
See, for instance, \cite{bickel09,decelle2011asymptotic,mossel2012stochastic,mossel2013proof,massoulie2014community,mossel2014consistency,abbe2014exact,hajek2014achieving,chin2015stochastic,hajek2015achieving,abbe2015community,gao2015achieving,zhang2015minimax} and the references therein.
Despite a rich literature dedicated to their theoretical properties, SBMs suffer significant drawbacks when it comes to modeling real world social and biological networks.
In particular, due to the model assumption, all nodes within the same community in an SBM are exchangeable and hence have the same degree distribution.
In comparison, nodes in real world networks often exhibit degree heterogeneity even when they belong to the same community \cite{peixoto2015model}.
For example, Bickel and Chen \citep{bickel09} showed that for a karate club network, SBM does not provide a good fit for the data set, and the resulting clustering analysis is qualitatively different from the truth.

One way to accommodate degree heterogeneity is to introduce a set of degree-correction parameters $\sth{\theta_i:i=1,\dots,n}$, one for each node, which can be interpreted as the popularity or importance of a node in the network. 
Then one could revise the edge distributions to $A_{ij} = A_{ji}\stackrel{ind.}{\sim} \text{Bern}(\theta_i\theta_jB_{z(i)z(j)})$ for all $i>j$, and this gives rise to the Degree-Corrected Block Models (DCBMs) \cite{dasgupta2004spectral,karrer2011stochastic}.
In a DCBM, within the same community, a node with a larger value of degree-correction parameter is expected to have more connections than that with a smaller value.
On the other hand, SBMs are special cases of DCBMs in which the degree-correction parameters are all equal.
Empirically, the larger class of DCBMs is able to provide possibly much better fits to many real world network datasets \cite{peixoto2015model}. 
Since the proposal of the model, there have been various methods proposed for community detection in DCBMs, including but not limited to spectral clustering \citep{qin2013regularized, lei2015consistency, jin2015fast, gulikers2015spectral} and modularity based approaches \cite{karrer2011stochastic, zhao2012consistency, amini2013pseudo,chen2015convexified}.
On the theoretical side, \cite{gulikers2015impossibility} provides an information-theoretic characterization of the impossibility region of community detection for DCBMs with two clusters, and sufficient conditions have been given in \citep{zhao2012consistency,chen2015convexified} for strongly and weakly consistent community detection.
However, two fundamental statistical questions remain unanswered:
\begin{itemize}
\item What are the fundamental limits of community detection in DCBMs? 
\item Once we know these limits, can we achieve them adaptively via some polynomial time algorithm?
\end{itemize}
The answer to the first question can provide important benchmarks for comparing existing approaches and for developing new procedures. 
The answer to the second question can lead to new practical methodologies with theoretically justified optimality.
The present paper is dedicated to provide answers to these two questions.


\paragraph{Main contributions}
Our main contributions are two-folded. 
First, we carefully formulate community detection in DCBMs as a decision-theoretic problem and then work out its asymptotic minimax risks with sharp constant in the exponent under certain regularity conditions.
For example, let $k$ be a fixed constant. 
Suppose there are $k$ communities all about the same size $n/k$ and the average within community and between community edge probabilities are $p$ and $q$ respectively with $p>q$ and $p/q = O(1)$, 
then under mild regularity conditions, the minimax risk under the loss function that counts the proportion of misclassified nodes takes the form 
\begin{equation}
\qth{\frac{1}{n}\sum_{i=1}^n\exp\left(-\theta_i\frac{n}{k}(\sqrt{p}-\sqrt{q})^2\right)}^{1+ o(1)}
\label{eq:minimax-form}
\end{equation}
as $n\to\infty$ whenever it converges to zero and the maximum expected node degree scales at a sublinear rate with $n$.
The general fundamental limits to be presented in \prettyref{sec:main_result} allow the community sizes to differ and the number of communities $k$ to grow to infinity with $n$.
To the best our knowledge, this is the first minimax risk result for community detection in DCBMs. 
The minimax risk (\ref{eq:minimax-form}) has an intuitive form.
In particular, the $i\Th$ term in the summation can be understood as the probability of the $i\Th$ node being misclassified. 
When $\theta_i$ is larger, the chance of the node being misclassified gets smaller as it has more edges and hence more information of its community membership is available in the network.
The term $n/k$ is roughly the community size. Since the community detection problem can be reduced to a hypothesis testing problem with $n/k$ as its effective sample size, a larger $k$ implies a more difficult problem.
Furthermore, $(\sqrt{p}-\sqrt{q})^2$ reflects the degree of separation among the $k$ clusters.
Note that $p$ and $q$ are the average within and between community edge probabilities and so $(\sqrt{p}-\sqrt{q})^2$ measures the difference of edge densities within and between communities.
If the clusters are more separated in the sense that the within and between community edge densities differ more, the chance of each node being misclassified becomes smaller. 
When the degree-correction parameters are all equal to one and $p=o(1)$, the expression in (\ref{eq:minimax-form}) reduces to the minimax risk of community detection in SBMs in \cite{zhang2015minimax}.

In addition, we investigate computationally feasible algorithms for adaptively achieving minimax optimal performance.
In particular, we propose a polynomial time two-stage algorithm. 
In the first stage, we obtain a relatively crude community assignment via a weighted $k$-medians procedure on a low-rank approximation to the adjacency matrix. 
Working with a low-rank approximation (as opposed to 
the leading eigenvectors of the adjacency matrix) enables us to avoid common eigen-gap conditions needed to establish weak consistency for spectral clustering methods.
Based on result of the first stage, the second stage applies a local optimization to improve on the community assignment of each network node. 
Theoretically, we show that it can adaptively achieve asymptotic minimax optimal performance for a large collection of parameter spaces.
The empirical effectiveness of the algorithm is illustrated 
by simulation.


\paragraph{Connection to previous work}
The present paper is connected to a number of papers on community detection in DCBMs and SBMs.

It is connected to the authors' previous work on minimax community detection in SBMs \cite{gao2015achieving,zhang2015minimax}.
However, the involvement of degree-correction parameters poses significant new challenges.
For the study of fundamental limits, especially minimax lower bounds, the fundamental two-point testing problem in DCBMs compares two product probability distributions with 
different marginals, while in SBMs, the two product distributions can be divided to two equal sized blocks within which the marginals are the same.
Consequently, a much more refined Cram\'{e}r--Chernoff argument is needed to establish the desired bound.
{In addition, to establish matching minimax upper bounds, the analysis of the maximum likelihood estimators is technically more challenging than that in \cite{zhang2015minimax}
due to the presence of degree-correction parameters and the wide range in which they can take values. 
In particular, we use a new folding argument to obtain the desired bounds.}
For adaptive estimation, the degree-correction parameters further increase the number of nuisance parameters.
As a result, although we still adopt a ``global-to-local'' two-stage strategy to construct the algorithm, 
neither stage of the proposed algorithm in the present paper can be borrowed from the algorithm proposed in \cite{gao2015achieving}. 
We will give more detailed comments on the first stage below.
For the second stage, the penalized neighbor voting approach in \cite{gao2015achieving} requires estimation of degree-correction parameters with high accuracy and hence is infeasible. 
We propose a new \emph{normalized} neighbor voting procedure to avoid estimating $\theta_i$'s.

The first stage of the proposed algorithm is connected to the literature on spectral clustering, especially \cite{jin2015fast}.
The novelty in our proposal is that we cluster the rows of a low-rank approximation to the adjacency matrix directly as opposed to the rows of the matrix containing the leading eigenvectors of the adjacency matrix.
As a result, the new spectral clustering algorithm does not require any eigen-gap condition to achieve consistency.

\paragraph{Organization}
After a brief introduction to common notation, the rest of the paper is organized as follows. 
\prettyref{sec:main_result} presents the decision-theoretic formulation of community detection in DCBMs and derives matching asymptotic minimax lower and upper bounds under appropriate conditions.
Given the fundamental limits obtained, we propose in \prettyref{sec:algo} a polynomial time two-stage algorithm and study when a version of it can adaptively achieve minimax optimal rates of convergence.
The finite sample performance of the proposed algorithm is examined in \prettyref{sec:nume} on simulated data examples.
Some proofs of the main results are given in \prettyref{sec:proof} with additional proofs deferred to the appendices.

\paragraph{Notation}
For an integer $d$, we use $[d]$ to denote the set $\{1,2,...,d\}$. For a positive real number $x$, $\ceil{x}$ is the smallest integer no smaller than $x$ and $\floor{x}$ is the largest integer no larger than $x$. For a set $S$, we use $\indc{S}$ to denote its indicator function and $|S|$ to denote its cardinality. For a vector $v\in\mathbb{R}^d$, its norms are defined by $\norm{v}_1=\sum_{i=1}^n|v_i|$, $\norm{v}^2=\sum_{i=1}^nv_i^2$ and $\norm{v}_{\infty}=\max_{1\leq i\leq n}|v_i|$. For two matrices $A,B\in\mathbb{R}^{d_1\times d_2}$, their trace inner product is defined as $\iprod{A}{B}=\sum_{i=1}^{d_1}\sum_{j=1}^{d_2}A_{ij}B_{ij}$. The Frobenius norm and the operator norm of $A$ are defined by $\fnorm{A}=\sqrt{\iprod{A}{A}}$ and $\opnorm{A}=s_{\max}(A)$, where $s_{\max}(\cdot)$ denotes the largest singular value.



\section{Fundamental Limits}
\label{sec:main_result}

In this section, we present fundamental limits of community detection in 
DCBMs.
We shall first define an appropriate parameter space and a loss function. 
A characterization of asymptotic minimax risks then follows.

\subsection{Parameter Space and Loss Function}

Recall that a random graph of size $n$ generated by a DCBM has its adjacency matrix $A$ satisfying $A_{ii} = 0$ for all $i\in [n]$ and
\begin{equation}
	\label{eq:DCBM}
A_{ij} = A_{ji} \stackrel{ind}{\sim}
\text{Bern}(\theta_i\theta_jB_{z(i)z(j)})\,\,\,\,\mbox{for all $i\neq j\in [n]$}.
\end{equation}
For each $u\in [k]$ and a given $z\in [k]^n$, we let $n_u = n_u(z) = \sum_{i=1}^n \indc{z(i)=u}$ be the size of the $u\Th$ community.
Let $P = \Expect[A]\in [0,1]^{n\times n}$.
We propose to consider the following parameter space for DCBMs of size $n$:
\begin{equation}
\label{eq:DCBM-space}
\begin{aligned}
\calP_n(\theta,p,q,k,\beta;\delta) 
= \big\{& P\in [0,1]^{n\times n}: \exists z\in [k]^n\,\,\mbox{and}\,\, B = B^T\in \reals^{k\times k},\\
&  \mbox{s.t.~} P_{ii} = 0, P_{ij} = \theta_i\theta_jB_{z(i)z(j)},\, \forall i\neq j\in [n],\\
& \frac{1}{n_u}\sum_{z(i)=u} \theta_i \in [1-\delta, 1+\delta], \, \forall u\in [k], \\
& \max_{u\neq v}B_{uv}\leq q< p\leq \min_{u}B_{uu},\\
& \frac{n}{\beta k}-1 \leq n_u \leq \frac{\beta n}{k}+1,\,\, \forall u \in [k]\big\}. 
\end{aligned}
\end{equation}

We are mostly interested in the behavior of minimax risks over a sequence of such parameter spaces as $n$ tends to infinity and the key model parameters $\theta,p,q,k$ scale with $n$ in some appropriate way. 
On the other hand, we take $\beta \geq 1$ as an absolute constant and require the (slack) parameter $\delta$ to be an $o(1)$ sequence throughout the paper.

To see the rationale behind the definition in \eqref{eq:DCBM-space}, let us examine each of the parameters used in the definition.
The starting point is $\theta\in \reals_+^n$, which we treat for now as a given sequence of {\dcn} parameters.
Given $\theta$, we consider all possible label vectors $z$ such that the approximate normalization $\frac{1}{n_u}\sum_{z(i)=u}\theta_u =1 + o(1)$ holds for all communities.
{The introduction of the slack parameter $0<\delta=o(1)$ rules out those parameter spaces in which community detection can be trivially achieved by only examining the normalization of the $\theta_i$'s.}
On the other hand, the proposed normalization ensures that for all $u\neq v\in [k]$,
\begin{equation*}
B_{uu} \approx \frac{1}{n_u(n_u-1)}\sum_{i:z(i)=u}\sum_{j\neq i: z(j)=u}
P_{ij}
\quad \mbox{and} \quad
B_{uv} \approx \frac{1}{n_u n_v}\sum_{i:z(i)=u}\sum_{j: z(j)=v} P_{ij}.
\end{equation*}
Therefore, $B_{uu}$ and $B_{uv}$
can be understood as the (approximate) average connectivity within the $u\Th$ community and between the $u\Th$ and the $v\Th$ communities, respectively.
Under this interpretation, $p$ can be seen as a lower bound on the within community connectivities and $q$ an upper bound on the between community connectivities.
We require the assumption $p>q$ to ensure that the model is ``assortative'' in an average sense.
Finally, we also require the individual community sizes to be contained in the interval $[n/(\beta k)-1,\beta n/k+1]$. In other words, the community sizes are assumed to be of the same order.

\begin{remark}
	\vskip -0.75em
An interesting special case of the parameter space in \eqref{eq:DCBM-space} is when $\theta = 1_n$, where $1_n\in\mathbb{R}^n$ is the all one vector. In this case, the parameter space reduces to one for assortative stochastic block models.	
\end{remark}

\vskip -0.75em
As for the loss function, we use the following misclassification proportion that has been previously used in the investigation of community detection in stochastic block models \cite{zhang2015minimax,gao2015achieving}:
\begin{equation}
	\label{eq:loss}
\ell(\hat{z}, z)=\frac{1}{n}\min_{\pi\in\Pi_k} H(\hat{z}, \pi(z)),
\end{equation}
where $H(\cdot,\cdot)$ is the Hamming distance defined as $H(z_1,z_2)=\sum_{i\in[n]}\indc{z_1(i)\neq z_2(i)}$ 
and $\Pi_k$ is the set of all permutations on $[k]$.
Here, the minimization over all permutations is necessary since we are only interested in comparing the partitions resulting from $z$ and $\hat{z}$ and so the actual labels used in defining the partitions should be inconsequential.

\subsection{Minimax Risks}

Now we study the minimax risk of the problem
\begin{equation}
\inf_{\hat{z}}\sup_{\calP_n(\theta,p,q,k,\beta;\delta)} \Expect\, \ell(\hat{z},z).\label{eq:def-mmm}
\end{equation}
In particular, we characterize the asymptotic behavior of (\ref{eq:def-mmm}) as a function of $n,\theta,p,q,k$ and $\beta$. 
The key information-theoretic quantity that governs the minimax risk of community detection is $I$, which is defined through 
\begin{equation}
\exp(-I)=\begin{cases}\frac{1}{n}\sum_{i=1}^n\exp\left(-\theta_i\frac{n}{2}(\sqrt{p}-\sqrt{q})^2\right), & k=2, \\
\frac{1}{n}\sum_{i=1}^n\exp\left(-\theta_i\frac{n}{\beta k}(\sqrt{p}-\sqrt{q})^2\right),& k\geq 3.
\end{cases}
\label{eq:formula-I}
\end{equation}
Note that $I$ depends on $n$ not only directly but also through $\theta$, $p$, $q$ and $k$.

\paragraph{Minimax upper bounds}
Given any parameter space $\calP_n(\theta,p,q,k,\beta;\delta)$,
we can define the following estimator:
\begin{equation}
\label{eq:mle}
\begin{aligned}
\hat{z} = \argmax_{z'\in \calP_n(\theta,p,q,k,\beta;\delta)} \prod_{1\leq i<j \leq n} 
\big[&(\theta_i\theta_j p)^{A_{ij}}(1-\theta_i\theta_j p)^{1-A_{ij}} \indc{z'(i)=z'(j)}  \\
&+ (\theta_i\theta_j q)^{A_{ij}}(1-\theta_i\theta_j q)^{1-A_{ij}} \indc{z'(i)\neq z'(j)} 
\big].
\end{aligned}
\end{equation}
If there is a tie, we break it arbitrarily.
The estimator \eqref{eq:mle} is the maximum likelihood estimator for a special case of DCBM where $B_{uu}= p$ and $B_{uv}=q$ for all $u\neq v\in [k]$.
In other cases, the objective function in \eqref{eq:mle} is a misspecified likelihood function.
For any sequences $\{a_n\}$ and $\{b_n\}$, we write $a_n = \Omega(b_n)$ if $a_n\geq C b_n$ for some absolute constant $C>0$ for all $n\geq 1$.
The following theorem characterizes the asymptotic behavior of the risk bounds for the estimator \eqref{eq:mle}.

\begin{theorem}[Minimax Upper Bounds]
	\label{thm:upper-nbar}
Consider any sequence \\
$\{\calP_n(\theta,p,q,k,\beta;\delta)\}_{n=1}^\infty$ such that as $n\to\infty$, $I\to\infty$, $p>q$, $\|\theta\|_{\infty} = o(n/k)$,
$\min_{i\in[n]}\theta_i=\Omega(1)$ and $\log k = o(\min(I,\log{n}))$. 
When $k\geq 3$, further assume $\beta\in[1,\sqrt{5/3})$.
Then the estimator in \eqref{eq:mle} satisfies
\begin{equation*}
\limsup_{n\to\infty}\, \frac{1}{I}\log\bigg(\sup_{\calP_n(\theta,p,q,k,\beta;\delta)} \Expect\, \ell(\hat{z},z) \bigg) \leq -1.
\end{equation*}
\end{theorem}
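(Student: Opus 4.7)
The plan is to analyze the misspecified MLE $\hat z$ defined in \eqref{eq:mle} via a standard "global-to-local" strategy: (i) reduce the expected misclassification loss to a sum of single-node error probabilities (after a preliminary coarse consistency bound that also fixes the permutation in $\ell(\hat z,z)$), and (ii) bound each such probability by a Chernoff inequality whose exponent matches $I$ in \eqref{eq:formula-I}.

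For step (ii), I would first analyze the idealized \emph{pointwise} MLE at a node $i$: conditional on all $j\neq i$ carrying their true labels, this estimator selects the community maximizing the single-node contribution to \eqref{eq:mle}. Fixing $z(i)=u$, it prefers $v\neq u$ over $u$ precisely when $\sum_{j\neq i}X_{ij}\bigl[\indc{z(j)=v}-\indc{z(j)=u}\bigr]>0$, where
\[
X_{ij}=\log\frac{(\theta_i\theta_j p)^{A_{ij}}(1-\theta_i\theta_j p)^{1-A_{ij}}}{(\theta_i\theta_j q)^{A_{ij}}(1-\theta_i\theta_j q)^{1-A_{ij}}}.
\]
Applying Chernoff's inequality with parameter $1/2$ and noting that the relevant moment generating function is monotone in $B_{uu}$ and $B_{uv}$ (so the worst case is $B_{uu}=p,\ B_{uv}=q$), each factor is controlled by the Bhattacharyya-type bound $\theta_i\theta_j\sqrt{pq}+\sqrt{(1-\theta_i\theta_j p)(1-\theta_i\theta_j q)}\le\exp(-\tfrac{1}{2}\theta_i\theta_j(\sqrt p-\sqrt q)^2)$. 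Multiplying over $j$ with $z(j)\in\{u,v\}$ and invoking the normalization $\sum_{z(j)=u}\theta_j=(1+o(1))n_u$ together with the size bounds $n_u\ge n/(\beta k)-1$ yields a pointwise error bound of the form $\exp(-\theta_i (n/(\beta k))(\sqrt p-\sqrt q)^2(1+o(1)))$. A union bound over $v\ne u$ costs only $\log k=o(I)$ in the exponent, and summing over $i$ recovers the target $e^{-I(1+o(1))}$ by the definition \eqref{eq:formula-I}; the case $k=2$ needs no union bound over $v$ and gives the exponent $n/2$ instead of $n/(\beta k)$.

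The main obstacle is step (i): showing that the global MLE is actually well-approximated by this pointwise MLE at every node. A preliminary weak-consistency bound (using $I\to\infty$ together with an entropy count over candidate label vectors and standard concentration) is needed to guarantee that $\hat z$ agrees with $z$ on all but a vanishing fraction of nodes with high probability; this in turn implies that the minimizing permutation in $\ell(\hat z,z)$ can be taken as the identity. Given this, $\hat z(i)\ne z(i)$ forces either the pointwise MLE to err at $i$ (handled by Chernoff above) or a large collection of other nodes to be simultaneously miscolored in a way that is unfavorable to the likelihood at $i$. Here is where the \emph{folding argument} alluded to in the introduction enters: because the $\theta_j$'s can span a wide dynamic range, the impact of a miscolored auxiliary node on the decision at $i$ depends strongly on its $\theta_j$, so one partitions the nodes into $\theta$-value strata ("folds") and bounds the excess risk in each stratum uniformly, obtaining enough slack to absorb the coupling between the pointwise and global errors. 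Combining this reduction with the pointwise Chernoff analysis and summing over $i$ yields
\[
\sup_{\calP_n(\theta,p,q,k,\beta;\delta)}\E\,\ell(\hat z,z)\le \exp(-I(1+o(1))),
\]
which establishes the theorem.
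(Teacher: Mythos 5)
Your step~(ii) is correct and matches the paper's Step~1: the Chernoff bound with parameter $1/2$ produces the factor $\theta_i\theta_j\sqrt{pq}+\sqrt{(1-\theta_i\theta_jp)(1-\theta_i\theta_jq)}\le\exp\bigl(-\tfrac12\theta_i\theta_j(\sqrt p-\sqrt q)^2\bigr)$, with the monotonicity-in-$B$ observation allowing passage to the worst case $B_{uu}=p$, $B_{uv}=q$. But the overall architecture you propose is different from the paper's, and it has a genuine gap at its load-bearing step.

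The paper does not prove \prettyref{thm:upper-nbar} by a global-to-local reduction to pointwise MLEs. (That two-stage idea is what the paper does for the polynomial-time Algorithm~\ref{alg:prove_refine} in \prettyref{sec:algo}, not here.) Instead it starts from $\Expect n\ell(\hat z,z)=\sum_{m=1}^n m\,\Prob(n\ell(\hat z,z)=m)$, bounds $\Prob(L(\tilde z)>L(z))$ by the Chernoff product for each fixed $\tilde z$, and then applies a union bound over all $\tilde z$ at Hamming distance $m$, split into three regimes of $m$. Your description of the folding argument is also off: it is not a stratification of a pointwise analysis that bounds ``excess risk in each stratum uniformly.'' Its purpose is to control $\sum_{i\in\Gamma}\theta_i$, where $\Gamma$ is the set of misclassified nodes. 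Absent any upper bound on $\norm{\theta}_\infty$, that sum is not $O(|\Gamma|)$; the map $\tau$ folds $\Gamma$ into the low-$\theta$ fraction $\mathcal{C}_u^-$ of each community, where individual $\theta_i\le 3/2$, so $\sum_{i\in\tau(\Gamma)}\theta_i\le C|\Gamma|$, at the cost of multiplicities $\binom{m\folds}{m}\binom{m}{m/\folds}$ absorbed into the union bound. This is a counting device for enumerating $\tilde z$'s, not a $\theta$-strata argument. Finally, the real gap in your proposal is step~(i): the reduction from the global likelihood maximizer to pointwise tests is asserted, not established. The events $\{\hat z(i)\neq z(i)\}$ are strongly coupled across $i$ for the global MLE, and there is no canonical decomposition of ``$\hat z(i)$ wrong'' into ``$i$'s pointwise test failed'' versus ``a few auxiliary nodes flipped unfavourably.'' A weak-consistency step giving $n\ell(\hat z,z)\le\eta n/k$ whp does not by itself decouple the errors: one must still sum over all configurations of the remaining $\le\eta n/k$ misclassified nodes, which is precisely the union bound the paper carries out in its Case~2, with the $k^m$, binomial and factorial factors. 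If you make your reduction precise you will essentially reconstruct that union bound; as written, the crux of the argument is missing.
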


Before proceeding, we briefly discuss the conditions in
\prettyref{thm:upper-nbar}.
First, the condition $\min_{i\in[n]}\theta_i=\Omega(1)$ requires that all $\theta_i$'s are at least of constant order. 
One should note that this condition does not rule out the possibility that $\max_i\theta_i \gg \min_i\theta_i$, and so a great extent of degree variation, even within the same community, is allowed.
Next, $\log k = o(\log n)$ requires that the number of communities $k$, if it diverges to infinity, grows at a sub-polynomial rate with the number of nodes $n$.
Furthermore, $\beta\in[1,\sqrt{5/3})$ is a technical condition that we need for a combinatorial argument in the proof to go through when $k\geq 3$. 
{Informed readers might find the result in \prettyref{thm:upper-nbar} in parallel to that in \cite{zhang2015minimax}. 
However, due to the presence of degree-correction parameters, the proof of \prettyref{thm:upper-nbar} is significantly different from that of the corresponding result in \cite{zhang2015minimax}.
For example, a new folding argument is employed to deal with degree heterogeneity.}

\paragraph{Minimax lower bounds}
We now show that the rates in \prettyref{thm:upper-nbar} are asymptotic minimax optimal by establishing matching minimax lower bounds.
To this end, we require the following condition on 
the degree-correction parameters $\theta\in\mathbb{R}_+^n$. 
The condition guarantees that $\calP_n(\theta,p,q,k,\beta;\delta)$ is non-empty.
Moreover, it is only needed for establishing minimax lower bounds.

\begin{thma}
We say that $\theta\in\mathbb{R}_+^n$ satisfies Condition N if
\begin{enumerate}
\item{When $k=2$,}
there exists a disjoint partition $\mathcal{C}_1,\mathcal{C}_2$ of $[n]$, such that $|\mathcal{C}_1|=\floor{n/2}$,  $|\mathcal{C}_2|\in\{\floor{n/2},\floor{n/2}+1\}$ and $|\mathcal{C}_u|^{-1}\sum_{i\in\mathcal{C}_u}\theta_i\in (1-\delta/4,1+\delta/4)$ for $u=1,2$.
\item{When $k\geq 3$,} there exists a disjoint partition $\{\mathcal{C}_u\}_{u\in[k]}$ of $[n]$, such that $|\mathcal{C}_1|\leq |\mathcal{C}_2|\leq...\leq |\mathcal{C}_k|$, $|\mathcal{C}_1|=|\mathcal{C}_2|=\floor{n/(\beta k)}$ and $|\mathcal{C}_u|^{-1}\sum_{i\in\mathcal{C}_u}\theta_i\in (1-\delta/4,1+\delta/4)$ for all $u\in[k]$.
\end{enumerate}
\end{thma}
We note that the condition is only on $\theta$ (as opposed to the parameter space) and the actually communities in the data generating model
need not coincide with the partition that occurs in the statement of the condition.

With the foregoing definition, we have the following result.
\begin{theorem}[Minimax Lower Bounds]
	\label{thm:lower-nbar}
Consider any sequence \\
$\{\calP_n(\theta,p,q,k,\beta;\delta)\}_{n=1}^\infty$ such that as $n\to\infty$, $I\to\infty$, $1< p/q=O(1)$, $p\|\theta\|_\infty^2 = o(1)$, $\log k = o(I)$, $\log(1/\delta) = o(I)$ and $\theta$ satisfies Condition N.
Then 
\begin{equation*}
\liminf_{n\to\infty}\, \frac{1}{I}\log\bigg(\inf_{\hat{z}}\sup_{\calP_n(\theta,p,q,k,\beta;\delta)} \Expect\, \ell(\hat{z},z) \bigg) \geq -1.
\end{equation*}
\end{theorem}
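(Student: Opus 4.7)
The plan is a two-point testing lower bound strengthened by a sharp Cram\'er--Chernoff computation that carefully handles the non-identically distributed summands arising from heterogeneous degree-correction. Using the partition $\{\mathcal{C}_u\}_{u\in[k]}$ from Condition N, fix a base label vector $z^*$ with $z^*(i)=u$ iff $i\in\mathcal{C}_u$, and for each $i\in\mathcal{C}_1\cup\mathcal{C}_2$ let $z^{(i)}$ be obtained from $z^*$ by swapping node $i$'s label between communities $1$ and $2$. The $\delta/4$ slack in Condition N together with $\|\theta\|_\infty=o(n/k)$ guarantees that both $z^*$ and every $z^{(i)}$ lie in $\mathcal{P}_n(\theta,p,q,k,\beta;\delta)$. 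Lower-bounding $\sup_z\mathbb{E}_z\ell(\hat z,z)$ by the average over this finite collection, and using $\ell(\hat z,z)\ge \frac{1}{n}\Indc\{\hat z_\pi(i)\ne z(i)\}$ for any permutation $\pi$, reduces the task to lower-bounding, for each $i$, the Bayes error of the two-point test between $z^*$ and $z^{(i)}$.

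For each such $i$, the log-likelihood ratio $L_i=\log\frac{dP_{z^{(i)}}}{dP_{z^*}}=\sum_{j\ne i}L_{ij}$ is a sum of independent Bernoulli log-ratios, with $L_{ij}$ non-degenerate only for $j\in(\mathcal{C}_1\cup\mathcal{C}_2)\setminus\{i\}$ (the other communities induce identical marginals under $z^*$ and $z^{(i)}$). The Chernoff--Cram\'er bound gives
$$\inf_{\text{test}}(\alpha_i+\beta_i)\ge \exp\bigl(-\Lambda_i^*(0)(1+o(1))\bigr),\quad \Lambda_i^*(0)=-\inf_{t\in[0,1]}\log\mathbb{E}_{z^*}\bigl[e^{tL_i}\bigr].$$
Because the two hypotheses differ by a clean exchange of roles between $\mathcal{C}_1$ and $\mathcal{C}_2$ and $|\mathcal{C}_1|=|\mathcal{C}_2|=\lfloor n/(\beta k)\rfloor$, the saddle point is exactly $t^*=\tfrac12$. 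Combining the Taylor expansion $-\log\mathrm{BC}(\mathrm{Bern}(\theta_i\theta_jp),\mathrm{Bern}(\theta_i\theta_jq))=\tfrac12\theta_i\theta_j(\sqrt p-\sqrt q)^2(1+o(1))$ (valid under $p\|\theta\|_\infty^2=o(1)$) with the approximate normalization $\sum_{j\in\mathcal{C}_u}\theta_j=(1+o(1))|\mathcal{C}_u|$ gives $\Lambda_i^*(0)=C_i(1+o(1))$ with $C_i=\tfrac12\theta_i(\sqrt p-\sqrt q)^2(|\mathcal{C}_1|+|\mathcal{C}_2|)$, matching the per-node exponent in \eqref{eq:formula-I}.

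Aggregating the per-node bounds and extending by symmetry---applying the argument to every $i\in[n]$ when $k=2$, and, for $k\ge 3$, invoking Condition N with different choices of the two smallest communities so that every node plays the role of the swapped node in some instance---yields
$$R\gtrsim \frac{1}{n}\sum_{i=1}^n \exp\bigl(-C_i(1+o(1))\bigr)=\exp\bigl(-I(1+o(1))\bigr).$$
The conditions $\log k=o(I)$ and $\log(1/\delta)=o(I)$ absorb the polynomial factors arising from restricting to a single pair of communities and from the slack in the normalization, while $1<p/q=O(1)$ and $\min_i\theta_i=\Omega(1)$ ensure the Bernoulli Taylor expansions and the saddle-point calculation are uniformly valid.

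The principal difficulty is the sharp Cram\'er--Chernoff lower bound on $\mathbb{P}_{z^*}[L_i\le 0]$ when the summands $L_{ij}$ are independent but have means and variances that depend on $\theta_j$; the classical Bahadur--Rao result handles only the i.i.d.\ case. The argument requires a change of measure to the exponentially tilted distribution at $t^*=\tfrac12$, converting the rare event into a typical one for the tilted law, followed by a Lindeberg/Berry--Esseen step for non-identically distributed summands. The hypothesis $p\|\theta\|_\infty^2=o(1)$ makes each tilted summand small enough for the saddle-point expansion to be uniformly valid in $\theta$, while $\min_i\theta_i=\Omega(1)$ ensures non-degeneracy of the variance in the central-limit step. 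A secondary obstacle is ruling out that $\hat z$ matches $z^{(i)}$ via a non-identity permutation in the loss $\ell$; this is controlled by restricting attention to hypotheses that differ from $z^*$ in only a single position and invoking $\log k=o(I)$ to absorb the sub-exponential number of competing permutations into the $(1+o(1))$ factor.
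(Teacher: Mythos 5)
Your overall strategy --- fix a base $z^*$, perturb a single node at a time, reduce to a two-point Bernoulli test, and apply a sharp tilted Cram\'er--Chernoff bound with a Lindeberg step for the tilted law --- is the right skeleton and does match the paper's proof of the testing lemma. But there is a genuine gap in the construction of the hypothesis class, and it is exactly the place where the $\theta$-heterogeneity bites.

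You assert that ``the $\delta/4$ slack in Condition N together with $\|\theta\|_\infty=o(n/k)$ guarantees that both $z^*$ and every $z^{(i)}$ lie in $\mathcal{P}_n(\theta,p,q,k,\beta;\delta)$.'' Two problems. First, $\|\theta\|_\infty = o(n/k)$ is \emph{not} a hypothesis of Theorem~\ref{thm:lower-nbar} (it appears only in the upper-bound Theorem~\ref{thm:upper-nbar}), and neither is $\min_i\theta_i = \Omega(1)$, which you also invoke in your final paragraph. Second, even if $\|\theta\|_\infty = o(n/k)$ held, it would not suffice: moving a node $i$ with weight $\theta_i$ between two communities of size $\asymp n/(\beta k)$ shifts each community average by roughly $\theta_i k/n$, so the resulting label vector only stays in $\mathcal{P}_n(\theta,p,q,k,\beta;\delta)$ if $\theta_i \lesssim \delta n/k$. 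The theorem merely assumes $\log(1/\delta) = o(I)$, which allows $\delta$ to decay (even polynomially in $n$), and then a nontrivial fraction of nodes may have $\theta_i$ far exceeding $\delta n/k$. For these nodes the swapped vector $z^{(i)}$ leaves the parameter space, so the corresponding terms $\exp(-\theta_i \frac{n}{\beta k}(\sqrt{p}-\sqrt{q})^2)$ are not justified in your aggregated sum.

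The paper's proof repairs exactly this. It defines $T_u$ to collect, within each community, the indices with the \emph{largest} $\theta_i$, taking $|T_u^c| \asymp \delta n/k^2$, so that every $i \in T^c$ has $\theta_i$ bounded by a constant (this follows from the normalization constraint); the candidate truths in $Z^*$ are required to agree with $z^*$ on $T$ and can only vary on $T^c$. This guarantees membership in the parameter space, and freezing the labels on the large set $T$ also makes the permutation in $\ell(\cdot,\cdot)$ trivial, a point your proposal handles only informally. The crucial observation you are missing is that restricting the sum to $T^c$ is \emph{not a loss}: since $T^c$ collects precisely the smallest $\theta_i$'s in each community, and $t\mapsto \exp(-t \cdot \frac{n}{\beta k}(\sqrt{p}-\sqrt{q})^2)$ is decreasing, one has
\begin{equation*}
\frac{1}{|T^c|}\sum_{i\in T^c}\exp\!\Big(-(1+o(1))\theta_i\tfrac{n}{\beta k}(\sqrt{p}-\sqrt{q})^2\Big)
\;\geq\;
\frac{1}{n}\sum_{i=1}^n\exp\!\Big(-(1+o(1))\theta_i\tfrac{n}{\beta k}(\sqrt{p}-\sqrt{q})^2\Big),
\end{equation*}
which recovers the full sum defining $\exp(-I)$, with the prefactor $|T^c|/n \asymp \delta/k$ absorbed into the $(1+o(1))$ exponent via $\log k = o(I)$ and $\log(1/\delta)=o(I)$. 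Without this monotonicity step, your argument only lower bounds by a sum over a subset of nodes and does not close.

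A secondary remark: your single-coordinate perturbations compare $z^*$ with $z^{(i)}$ directly, whereas the paper's reduction allows the candidate truths in $Z^*$ to differ on all of $T^c$ simultaneously (then peels off one coordinate at a time via $\ell(\hat z, z)\geq \frac{1}{n}\sum_{i\in T^c}\Indc\{\hat z(i)\neq z(i)\}$). Both are viable once the set $T^c$ is introduced, but without it your permutation argument and your parameter-space membership argument both have holes.

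The Cram\'er--Chernoff part of your plan --- tilting to the Hellinger saddle point $t^*=1/2$, Taylor expanding $-\log\mathrm{BC}$, and finishing with a Lindeberg argument for the tilted law --- is correct in spirit and is what the paper does in Lemma~\ref{lem:t-lower}/\ref{lem:t-lower-aprox}. If you incorporate the $T/T^c$ decomposition and the monotonicity observation above, the proof should go through.
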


Compared with the conditions in \prettyref{thm:upper-nbar}, the conditions of \prettyref{thm:lower-nbar} are slightly different.
The condition $1< p/q=O(1)$ ensures that the smallest average within community connectivity is of the same order as (albeit larger than) the largest average between community connectivity. 
Such an assumption is typical in the statistical literature on block models.
The condition $\norm{\theta}^2_{\infty}p=o(1)$ ensures that the maximum expected node degree scales at a sublinear rate with the network size $n$.
Furthermore, 
when $k=O(1)$, the condition $\log k=o(I)$ can be dropped because it is equivalent to $I\rightarrow\infty$, 
which in turn 
is necessary for the minimax risk to converge to zero.

Combining both theorems, we have the minimax risk of the problem.
\begin{corollary}
	\label{cor:minimax}
Under the conditions of Theorems \ref{thm:upper-nbar} and \ref{thm:lower-nbar}, we have
$$
\inf_{\hat{z}}\sup_{{\mathcal{P}}_n(\theta,p,q,k,\beta; \delta)}\mathbb{E} \ell(\hat{z},z)=\exp(-(1+o(1))I),
$$
where $o(1)$ stands for a sequence whose absolute values tend to zero as 
$n$ tends to infinity.
\end{corollary}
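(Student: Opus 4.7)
The proof will be a one-step assembly: Theorems~\ref{thm:upper-nbar} and \ref{thm:lower-nbar} are jointly applicable under the hypotheses of the corollary, and the corollary is the logical ``and'' of their conclusions. So my plan is simply to verify that the two statements glue together cleanly at the level of $\exp(-(1+o(1))I)$.

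Concretely, let $\hat{z}^\star$ denote the estimator defined in \eqref{eq:mle}. Since the hypotheses of \prettyref{thm:upper-nbar} are in force, that theorem yields
\begin{equation*}
\limsup_{n\to\infty}\frac{1}{I}\log\bigg(\sup_{\calP_n(\theta,p,q,k,\beta;\delta)} \Expect\,\ell(\hat{z}^\star,z)\bigg)\leq -1.
\end{equation*}
Because $\inf_{\hat z}\sup \Expect\,\ell(\hat z,z)\leq \sup \Expect\,\ell(\hat z^\star,z)$, monotonicity of $\log$ and $\limsup$ gives the same bound with the infimum-over-estimators inserted on the left-hand side. Next, the hypotheses of \prettyref{thm:lower-nbar} are also in force, producing the matching
\begin{equation*}
\liminf_{n\to\infty}\frac{1}{I}\log\bigg(\inf_{\hat{z}}\sup_{\calP_n(\theta,p,q,k,\beta;\delta)} \Expect\,\ell(\hat{z},z)\bigg)\geq -1.
\end{equation*}
Sandwiching these two inequalities shows that the normalized logarithm of the minimax risk converges to $-1$, which is precisely the statement
\begin{equation*}
\inf_{\hat z}\sup_{\calP_n(\theta,p,q,k,\beta;\delta)} \Expect\,\ell(\hat{z},z)=\exp\bigl(-(1+o(1))I\bigr).
\end{equation*}

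There is essentially no obstacle in this step beyond a bookkeeping check that the union of the two theorems' hypotheses is exactly what the corollary assumes (e.g., $I\to\infty$ appears in both; $\min_i\theta_i=\Omega(1)$, $\|\theta\|_\infty=o(n/k)$, $\log k=o(\min(I,\log n))$, $\beta\in[1,\sqrt{5/3})$ for $k\geq3$ come from the upper bound; $1<p/q=O(1)$, $p\|\theta\|_\infty^2=o(1)$, $\log(1/\delta)=o(I)$ and Condition~N come from the lower bound). The real work has already been done upstream: the delicate Cram\'er--Chernoff two-point analysis and the folding argument behind \prettyref{thm:upper-nbar}, and the refined lower-bound construction behind \prettyref{thm:lower-nbar}. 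Once both are granted, the corollary is immediate.
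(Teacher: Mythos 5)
Your proof is correct and is the standard one-step assembly the paper implicitly intends: the upper bound transfers to the minimax risk because $\inf_{\hat z}\sup\Expect\,\ell(\hat z,z)\leq\sup\Expect\,\ell(\hat z^\star,z)$, the lower bound is already in terms of the infimum, and the $\limsup/\liminf$ sandwich gives the stated form. The paper leaves Corollary~\ref{cor:minimax} unproved precisely because it follows immediately from Theorems~\ref{thm:upper-nbar} and~\ref{thm:lower-nbar} in exactly this way.
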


Setting $\beta=1$ in \prettyref{cor:minimax} leads to the minimax result \eqref{eq:minimax-form} in the introduction.
We refer to \prettyref{sec:intro} for the meanings of the terms in $I$. 


\begin{remark}
	\vskip -0.75em
When $\theta = 1_n$, the foregoing minimax risk reduces to the corresponding result for stochastic block models \cite{zhang2015minimax} in the sparse regime where $q<p=o(1)$.
In this case, \eqref{eq:formula-I} implies that the minimax risk is
$$
\exp(-(1+o(1))I) = \begin{cases}\exp\left(-(1+o(1))\frac{n}{2} (\sqrt{p}-\sqrt{q})^2\right), & k=2,\\
\exp\left(-(1+o(1))\frac{n}{\beta k}(\sqrt{p}-\sqrt{q})^2\right), & k\geq 3.
\end{cases}
$$
Note that when $q<p=o(1)$, the \renyi divergence of order $\frac{1}{2}$ used in the minimax risk expression in \cite{zhang2015minimax} is equal to $(1+o(1))(\sqrt{p}-\sqrt{q})^2$.
\end{remark}

\section{An Adaptive and Computationally Feasible Procedure}
\label{sec:algo}

Theorem \ref{thm:upper-nbar} shows that the minimax rate can be achieved by the estimator \eqref{eq:mle} obtained via combinatorial optimization which is not computationally feasible.
Moreover, the procedure depends on the knowledge of the parameters $\theta$, $p$ and $q$.
These features make it not applicable in practical situations.
In this section, we introduce a two-stage algorithm for community detection in DCBMs which is not only computationally feasible but also adaptive over a wide range of unknown parameter values.
We show that the procedure achieves
minimax optimal rates under certain regularity conditions.

\subsection{A Two-Stage Algorithm}

The proposed algorithm consists of an initialization stage and a refinement stage.

\paragraph{Initialization: weighted $k$-medians clustering}
{To explain the rationale behind our proposal,}
with slight abuse of notation, let $P = (P_{ij})\in [0,1]^{n\times n}$, where
for all $i,j\in [n]$, $P_{ij} = P_{ji} = \theta_i\theta_j B_{z(i) z(j)}$. 
Except for the diagonal entries, $P$ is the same as in \eqref{eq:DCBM-space}.
For any $i\in [n]$, let $P_i$ denote the $i\Th$ row of $P$. 
Then for all $i$ such that $z(i)=u$, we observe that
\begin{equation*}
\theta_i^{-1}P_i=(\theta_1 B_{u,z(1)}, \dots,  \theta_n B_{u,z(n)})	
\end{equation*}
are all equal.
Thus, there are exactly $k$ different vectors that the normalized row vectors $\{\theta_i^{-1}P_i\}_{i=1}^n$ can be.
Moreover, which one of the $k$ vectors the $i\Th$ normalized row vector equals is determined solely by its community label $z(i)$.
This observation suggests one can design a reasonable community detection procedure by clustering the sample counterparts of the vectors $\{\theta_1^{-1}P_1,\theta_2^{-1}P_2,...,\theta_n^{-1}P_n\}$,
which leads us to the proposal of Algorithm \ref{alg:algl3}.

In Algorithm \ref{alg:algl3}, 
Steps 1 and 2 aim to find an estimator $\hat{P}$ of $P$ by solving a low rank approximation problem. 
Then, in Step 3, we can use $\norm{\hat{P}_i}_1^{-1}\hat{P}_i$ as a surrogate for $\theta_i^{-1}P_i$.
Finally, Step 4 performs a weighted $k$-median clustering procedure applied on the row vectors of the $n\times k$ matrix
$\begin{bmatrix}
\norm{\hat{P}_1}_1^{-1}\hat{P}_1 \\ \cdots \\ \norm{\hat{P}_n}_1^{-1}\hat{P}_n 
\end{bmatrix}$.

The main novelty of the proposed Algorithm \ref{alg:algl3} lies in the first two steps.
To improve the effect of denoising in the sparse regime, Step 1 removes the rows and the columns of $A$ whose sums are too large. This idea was previously used in community detection in SBMs \citep{chin2015stochastic}.
{If one omits this step, the high probability error bound for the output of Algorithm \ref{alg:algl3} could suffer an extra multiplier of order $O(\log n)$.}
The choice of $\tau$ will be made clear in \prettyref{lem:initial3} and \prettyref{rmk:tau} below.
Note that the potential loss of information in Step 1 for those highly important nodes will be later recovered in the refinement state.
The $\hat{P}$ matrix sought in Step 2 can be obtained by an eigen-decomposition of $T_{\tau}(A)$.
That is, $\hat{P}=\hat{U}\hat{\Lambda}\hat{U}^T$, where $\hat{U}\in\mathbb{R}^{n\times k}$ collects the $k$ leading eigenvectors, and $\hat{\Lambda}$ is a diagonal matrix of top $k$ eigenvalues.
A notable difference between Algorithm \ref{alg:algl3} and many existing spectral clustering algorithms (e.g., \cite{qin2013regularized, lei2015consistency, jin2015fast}) is that we work with the estimated probability matrix $\hat{P}$ directly rather than its leading eigenvectors $\hat{U}$.
As we shall see later, such a difference allows us to avoid eigen-gap assumption required for performance guarantees in the aforementioned papers.
{Using weighted $k$-median in step 4 is mainly for technical reasons, as it allows us to establish the same error bound under weaker conditions.}
In a recent paper \cite{chen2015convexified}, a weighted $k$-medians algorithm was also used in community detection in DCBMs. 
A key difference is that we apply it on the matrix $\hat{P}$, 
while \cite{chen2015convexified} applied it on an estimator of the 
membership
matrix $(\indc{z(i)=z(j)})\in \sth{0,1}^{n\times n}$ obtained from a convex program.

\begin{algorithm}[htb]
    \SetAlgoLined
    \KwData{Adjacency matrix $A\in\{0,1\}^{n\times n}$, number of clusters $k$, tuning parameter $\tau$.}
    \KwResult{Initial label estimator $\hat z^0$.}
    \nl Define $T_\tau(A)\in\{0,1\}^{n\times n}$ by replacing the $i$th row and column of $A$ whose row sum is larger than $\tau$ by zeroes for each $i\in[n]$\;
    \nl Solve $$\hat{P}=\argmin_{\text{rank}(P)\leq k}\fnorm{T_{\tau}(A)-P}^2;$$
    \nl Let $\hat P_i$ be the $i\Th$ row of $\hat P$. Define $S_0=\{i\in[n]:\norm{\hat P_i}_1=0\}$. Set $\hat z^0(i)=0$ for $i\in S_0$, and define $\tilde P_i=\hat P_i/\norm{\hat P_i}_1$ for $i\notin S_0$\;
    \nl Solve a $(1+\epsilon)$-$k$-median optimization problem on $S_0^c$. That is, find $\{\hat{z}^0(i)\}_{i\in S_0^c}$ in $[k]^{|S_0^c|}$ that satisfies
    \begin{equation}
    \sum_{u=1}^k\min_{v_u\in\mathbb{R}^n}\sum_{\{i\in S_0^c:\hat{z}^0(i)=u\}}\norm{\hat P_i}_1\norm{\tilde P_i-v_u}_1\leq (1+\epsilon)\min_{z\in[k]^n}\sum_{u=1}^k\min_{v_u\in\mathbb{R}^n}\sum_{\{i\in S_0^c:z(i)=u\}}\norm{\hat P_i}_1\norm{\tilde P_i-v_u}_1.\label{eq:opt-km}
    \end{equation}
\caption{Weighted $k$-medians Clustering\label{alg:algl3}}
\end{algorithm}




\paragraph{Refinement: normalized network neighbor counts}
As we shall show later, the error rate of Algorithm \ref{alg:algl3} decays polynomially with respect to the key quantity $I$ defined in \eqref{eq:formula-I}. 
To achieve the desired exponential decay rate with respect to $I$ as in the minimax rate, we need to further refine the community assignments obtained from Algorithm \prettyref{alg:algl3}.

To this end, we propose a prototypical refinement procedure in Algorithm \ref{alg:refine}.
The algorithm determines a possibly new community label for the $i\Th$ node by counting the number of neighbors that the $i\Th$ node has in each community normalized by the corresponding community size, and then picking the label of the community that maximizes the normalized counts.
If there is a tie, we break it in an arbitrary way.

\begin{algorithm}[tb]
	\label{alg:refine}
	\SetAlgoLined
	\KwData{Adjacency matrix $A\in\{0,1\}^{n\times n}$, number of clusters $k$ and a community label vector $\hat{z}^0$;}
	\KwResult{A refined community label vector $\hat{z}\in[k]^n$;}
	\caption{A Prototypical Refinement Procedure}
	\nl For each $i\in[n]$, let
	\begin{equation}
	\hat z(i)=\argmax_{u\in[k]}\frac{1}{|\{j:\hat z^0(j)=u\}|}\sum_{\{j:\hat z^0(j)=u\}}A_{ij}.\label{eq:ave-edge}
	\end{equation}
\end{algorithm}


To see the rationale behind Algorithm \ref{alg:refine}, let us consider a simplified version of the problem.
Suppose $k = 2$, $n = 2m + 1$ for some integer $m\geq 1$, $B_{11} = B_{22} = p$ and $B_{12} = B_{21} = q$.
Moreover, let us assume that the community labels of the first $2m$ nodes are such that $z(i) = 1$ for $i=1,\dots,m$ and $z(i)=2$ for $i=m+1,\dots, 2m$. The label of the last node $z(n)$ remains to be determined from the data.
When $\sth{z(i): i=1,\dots, 2m}$ are the truth, the determination of the label for the $n\Th$ node reduces to the following testing problem:
\begin{equation}
\begin{aligned}
H_0: \{A_{n,i}\}_{i\in[n-1]} &\sim\bigotimes_{i=1}^{m}\text{Bern}(\theta_{n}\theta_ip)\otimes\bigotimes_{i=m+1}^{2m}\text{Bern}(\theta_{n}\theta_iq),	\quad \mbox{vs.}\\
H_1: \{A_{n,i}\}_{i\in[n-1]}
&\sim\bigotimes_{i=1}^m\text{Bern}(\theta_{n}\theta_iq)\otimes\bigotimes_{i=m+1}^{2m}\text{Bern}(\theta_{n}\theta_ip).
\end{aligned}
\label{eq:2-test}	
\end{equation}
The hypotheses $H_0$ and $H_1$ are joint distributions of $\{A_{n,i}\}_{i\in[n-1]}$ in the two cases $z(n)=1$ and $z(n)=2$, respectively.
For this simple vs.~simple testing problem, the Neyman--Pearson lemma dictates that the likelihood ratio test is optimal.
However, it is not a satisfying answer for our goal, since the likelihood ratio test needs to use the values of the unknown parameters $p$, $q$ and $\theta$.
While it is possible to obtain sufficiently accurate estimators for $p$ and $q$, it is hard to do so for $\theta$, especially when the network is sparse.
In summary, the dependence of the likelihood ratio test on nuisance parameters makes it impossible to apply in practice.
To overcome this difficulty, we propose to consider a simple test which
\begin{equation}
\mbox{rejects $H_0$ if}
\sum_{i:z(i)=1}A_{n,i} < \sum_{i:z(i)=2}A_{n,i}.
\label{eq:count}
\end{equation}
As we shall show later in Lemma \ref{lem:t-lower} and Lemma \ref{lem:t-upper}, this simple procedure achieves the optimal testing error exponent.
It is worthwhile to point out that it does not require any knowledge of $p$, $q$ or $\theta$, and hence the procedure is adaptive. 
A detailed study of the testing problem (\ref{eq:2-test}) is given in Section \ref{sec:testing}.

Inspired by the foregoing discussion, when $k=2$ and the two community sizes are different, we propose to normalize the counts in \eqref{eq:count} by the community sizes. 
Moreover, when there are more than two communities, we propose to perform pairwise comparison based on the foregoing (normalized) test statistic for each pair of community labels, which becomes the procedure in \eqref{eq:ave-edge} as long as we replace the unknown truth $z$ with an initial estimator $\hat z^0$. For a good initial estimator such as the one output by Algorithm \ref{alg:algl3}, the refinement can lead to minimax optimal errors in misclassification proportion for a large collection of parameter spaces.

\subsection{Performance Guarantees}

In this part, we state high probability performance guarantees for the proposed procedure. The theoretical property of the algorithms requires an extra bound on the maximal entry of $\mathbb{E}A$. We incorporate this condition into the following parameter space
\begin{eqnarray*}
&& \mathcal{P}_n'(\theta,p,q,k,\beta;\delta,\alpha) \\
&& =\big\{P=(\theta_i\theta_jB_{z(i)z(j)}\indc{i\neq j})\in\mathcal{P}_n(\theta,p,q,k,\beta;\delta): \max_{u\in[k]}B_{uu}\leq\alpha p \big\}.
\end{eqnarray*}
The parameter $\alpha$ is assumed to be a constant no smaller than $1$ that does not change with $n$.
By studying the proofs of Theorem \ref{thm:lower-nbar} and Theorem \ref{thm:upper-nbar}, the minimax lower and upper bounds do not change for the slightly smaller parameter space $\mathcal{P}_n'(\theta,p,q,k,\beta;\delta,\alpha)$. Therefore, the rate $\exp(-(1+o(1))I)$ still serves as a benchmark for us to develop theoretically justifiable algorithms for the parameter space $\mathcal{P}_n'(\theta,p,q,k,\beta;\delta,\alpha)$.

\paragraph{Error rate for the initialization stage}
As a first step, we provide the following high probability error bound for Algorithm \ref{alg:algl3}.

\begin{lemma}[Error Bound for Algorithm \ref{alg:algl3}]\label{lem:initial3}
Assume $\delta=o(1)$, $1<p/q=O(1)$ and $\norm{\theta}_{\infty}=o(n/k)$. 
Let $\tau=C_1(np\norm{\theta}_{\infty}^2+1)$ for some sufficiently large constant $C_1>0$ in Algorithm \ref{alg:algl3}. 
Then, there exist some constants $C',C>0$, such that for any generative model in $\mathcal{P}_n'(\theta,p,q,k,\beta;\delta,\alpha)$, we have with probability at least $1-n^{-(1+C')}$,
$$\min_{\pi\in\Pi_k}\sum_{\{i:\hat{z}(i)\neq \pi(z(i))\}}\theta_i\leq C\frac{(1+\epsilon)k^{5/2}\sqrt{n\norm{\theta}_{\infty}^2p+1}}{p-q}\,.
$$
\end{lemma}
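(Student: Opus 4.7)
The plan is to combine three ingredients: (i) a spectral concentration bound for the trimmed adjacency matrix, which yields an entrywise $\ell_1$ control of $\hat{P} - P$; (ii) the structural fact that the rows of $P$ collapse to exactly $k$ well-separated prototypes after $\ell_1$-normalization; and (iii) a majority-assignment argument that converts the $(1+\epsilon)$-optimality of the weighted $k$-medians solution into a misclassification bound.

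First, I would exploit that $\hat{P}$ is the best rank-$k$ approximation to $T_\tau(A)$. Since $\mathrm{rank}(P) \leq k$, the best-rank-$k$ property gives $\opnorm{\hat{P} - P} \leq 2\opnorm{T_\tau(A) - P}$, and because $\hat{P} - P$ has rank at most $2k$, $\fnorm{\hat{P} - P} \leq 2\sqrt{k}\,\opnorm{T_\tau(A) - P}$. For $\tau = C_1(np\|\theta\|_\infty^2 + 1)$, standard concentration for trimmed Bernoulli matrices (in the spirit of Feige--Ofek or Chin--Rao--Vu, adapted to the heterogeneous entry probabilities of a DCBM) will yield $\opnorm{T_\tau(A) - P} \lesssim \sqrt{n\|\theta\|_\infty^2 p + 1}$ with probability at least $1 - n^{-(1+C')}$. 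Entrywise Cauchy--Schwarz then converts this to $\|\hat{P}-P\|_{1,1} := \sum_{i,j}|\hat{P}_{ij} - P_{ij}| \leq n\fnorm{\hat{P}-P} \lesssim n\sqrt{k(n\|\theta\|_\infty^2 p + 1)}$.

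Second, I would set $v_u^* = (\theta_j B_{u, z(j)})_{j=1}^n$ and $\mu_u^* := v_u^*/\|v_u^*\|_1$, so that $P_i/\|P_i\|_1 = \mu_{z(i)}^*$, i.e.\ all rows of $P$ collapse to one of exactly $k$ normalized prototypes. Under the $\delta$-approximate normalization together with $1 < p/q = O(1)$ and $\beta = O(1)$, direct computation will give $\|v_u^*\|_1 = \Theta(np)$, $\|P_i\|_1 = \Theta(\theta_i np)$, and the separation $\|\mu_u^* - \mu_v^*\|_1 \gtrsim (p-q)/(kp)$ for any $u \neq v$. The same row-norm estimate also controls the $\theta$-weight of the dead set $S_0$: if $i \in S_0$ then $\|\hat{P}_i - P_i\|_1 = \|P_i\|_1 \asymp \theta_i np$, giving $\sum_{i \in S_0}\theta_i \lesssim \|\hat{P}-P\|_{1,1}/(np\cdot\min_i\theta_i)$, which is well within the target rate.

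Third, I would plug the truth $(z, \mu^*)$ into the weighted $k$-medians cost. Writing $\hat{P}_i = \|\hat{P}_i\|_1 \tilde{P}_i$ and $P_i = \|P_i\|_1 \mu_{z(i)}^*$, the triangle inequality gives
\begin{equation*}
\|\hat{P}_i\|_1\|\tilde{P}_i - \mu_{z(i)}^*\|_1 = \bigl\|\hat{P}_i - \|\hat{P}_i\|_1\mu_{z(i)}^*\bigr\|_1 \leq \|\hat{P}_i - P_i\|_1 + \bigl|\|\hat{P}_i\|_1 - \|P_i\|_1\bigr| \leq 2\|\hat{P}_i - P_i\|_1,
\end{equation*}
so the truth achieves $k$-medians cost at most $2\|\hat{P}-P\|_{1,1}$, and the $(1+\epsilon)$-optimality in Step~4 of \prettyref{alg:algl3} then carries the same bound to the algorithm's output $(\hat{z}^0, \hat{v})$ up to a factor $2(1+\epsilon)$. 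To convert this aggregate cost bound into a misclassification count, I would assign each learned cluster $u$ to its $\|\hat{P}_\cdot\|_1$-weighted majority true label $\pi(u)$, show by a weighted counting argument that $\hat{v}_u$ must then be within $O((p-q)/(kp))$ (in $\ell_1$) of $\mu_{\pi(u)}^*$ on a constant fraction of the weight of cluster $u$, and use the separation from step two to lower bound each misclassified index's contribution to the cost by $\Omega(\|\hat{P}_i\|_1(p-q)/(kp))$. Summing and rearranging will give $\sum_{z(i)\neq \pi(\hat{z}^0(i))}\theta_i \cdot np \cdot (p-q)/(kp) \lesssim (1+\epsilon)\|\hat{P}-P\|_{1,1}$, which together with the $S_0$ contribution delivers the claimed bound. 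The main obstacle will be precisely this last step: $(1+\epsilon)$-optimal $k$-medians only delivers an aggregate cost bound, while what is needed is a label-level conclusion, so one must run a careful weighted majority-voting argument on each learned cluster that copes with unbalanced community sizes, a wide range of $\theta_i$ values, and the nodes in $S_0$. This is also where the polynomial-in-$k$ slack enters, accounting for the $k^{5/2}$ factor rather than the $k^{3/2}$ one would naively read off from the separation and the $\|\hat{P}-P\|_{1,1}$ bound alone.
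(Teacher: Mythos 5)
Your high-level plan --- spectral concentration for the trimmed adjacency, $\ell_1$-normalized row prototypes with an explicit separation, and a conversion of $(1+\epsilon)$-optimal weighted $k$-medians cost into a weighted misclassification count --- is the same architecture the paper uses, and you correctly identify the cost-to-label conversion as the hard step. However, two parts of the sketch do not survive closer inspection.

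First, the claim $\|P_i\|_1 = \Theta(\theta_i np)$ (and $\|v_u^*\|_1 = \Theta(np)$) is wrong as stated. Writing $\|P_i\|_1/\theta_i = \sum_{u}B_{z(i),u}\sum_{j:z(j)=u}\theta_j$, the only term with a guaranteed lower bound is the diagonal one, $B_{z(i),z(i)}\,n_{z(i)} \gtrsim p\,n/(\beta k)$; the off-diagonal $B_{z(i),u}$ for $u\neq z(i)$ may be arbitrarily small, since the parameter space only imposes $B_{uv}\leq q$. The correct two-sided bound is therefore $pn/(2\beta k)\lesssim \|P_i\|_1/\theta_i \lesssim np$, as in \eqref{eq:p1t}. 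That missing $1/k$ in the lower bound is exactly where the extra factor of $k$ in the statement comes from: the conversion $\theta_i \lesssim (k/np)\|P_i\|_1$, multiplied by the $kp/(p-q)$ inverse separation and the $\sqrt{k}\sqrt{np\|\theta\|_\infty^2+1}$ Frobenius bound, gives $k^{5/2}$. Your attribution of the extra $k$ to the majority-voting slack is therefore misplaced; indeed \prettyref{rmk:tau} of the paper (the remark following Corollary~\ref{coro:I-J}) confirms that assuming $\min_{u\neq v}B_{uv}=\Omega(q)$ --- which is what $\|P_i\|_1 = \Theta(\theta_i np)$ implicitly assumes --- recovers $k^{3/2}$.

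Second, the majority-voting device you sketch has a hole you leave open: the map sending each learned cluster to its weighted-majority true label need not be injective, so one cannot simply ``lower bound each misclassified index's contribution'' and sum. The paper avoids this entirely via Lemma~\ref{lem:S-anderson}: it fixes a radius $b$ equal to half the separation, declares $S=\{i\notin S_0:\|\hat{V}_i-\bar{P}_i\|_1\geq b\}$, and observes that any two good nodes lying in distinct true communities must receive distinct $\hat{z}^0$ labels (otherwise their shared centroid would violate the separation). It then partitions $[k]$ into $R_1$ (true communities with no good node), $R_2$ (good nodes all carry the same learned label), and $R_3$ (good nodes split across labels), and uses the counting inequality $|R_2|+2|R_3|\leq k$, hence $|R_3|\leq|R_1|$, to absorb the $R_3$ mass into the $S$ mass, yielding $\min_\pi \sum_{\hat{z}^0(i)\neq\pi(z(i))}\theta_i \leq \sum_{i\in S_0}\theta_i + (2\beta^2+1)\sum_{i\in S}\theta_i$ with no bijective majority map needed. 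To complete your route you would essentially have to reprove this lemma. Once it is in hand, the rest of your sketch --- chaining $(1+\epsilon)$-optimality through the triangle inequality to bound $\sum_{i\in S}\|\hat{P}_i\|_1$, then trading $\|\hat{P}_i\|_1$ for $\|P_i\|_1$ and $\theta_i$ via Cauchy--Schwarz and the (corrected) row-norm bound --- is the same as the paper's.
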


Lemma \ref{lem:initial3} provides a uniform high probability bound for the sum of $\theta_i$'s of the nodes which are assigned wrong labels. Before discussing the implication of this result, we give two remarks.

\begin{remark}
	\vskip -0.75em
Algorithm \ref{alg:algl3} applies a weighted $k$-medians procedure on the matrix $\hat{P}$ instead of its leading eigenvectors. This is the main difference between Algorithm \ref{alg:algl3} and many traditional spectral clustering algorithms. As a result, we avoid any eigengap assumption that is imposed to prove consistency results for spectral clustering algorithms \citep{rohe2011spectral,joseph2013impact,qin2013regularized, lei2015consistency, jin2015fast}.
\end{remark}

\begin{remark}
	\label{rmk:tau}
		\vskip -0.75em
Lemma \ref{lem:initial3} suggests that the thresholding parameter $\tau$ in Algorithm \ref{alg:algl3} should be set at the order of $np\norm{\theta}_{\infty}^2+1$. Under the extra assumption $\frac{\max_{i\neq j}\mathbb{E}A_{ij}}{\min_{i\neq j}\mathbb{E}A_{ij}}=O(1)$, we can use a data-driven version $\tau=C_1\frac{1}{n}\sum_{i\neq j}A_{ij}$ for some large constant $C_1>0$. The result of Lemma \ref{lem:initial3} stays the same.
\end{remark}

\begin{remark}
		\vskip -0.75em
The extra $(1+\epsilon)$ slack that we allow in Algorithm \ref{alg:algl3} is also reflected in the error bound. 
\end{remark}

	\vskip -0.75em
The following corollary exemplifies how the result of Lemma \ref{lem:initial3} can be specialized into a high probability bound for the loss function \eqref{eq:loss} with a rate depending on $I$ under some stronger conditions.
These conditions, especially $\min_i\theta_i=\Omega(1)$, can be relaxed in Theorem \ref{thm:algo-J} stated in the next paragraph.

\begin{corollary}
	\label{cor:ini}
Under the conditions of \prettyref{lem:initial3}, if we further assume $p\geq n^{-1}$, $k=O(1)$ and $\Omega(1)=\min_i\theta_i\leq\norm{\theta}_{\infty}=O(1)$, then there exist some constants $C',C>0$, such that for any generative model in $\mathcal{P}_n'(\theta,p,q,k,\beta;\delta,\alpha)$, we have
$\ell(\hat{z},z)\leq  C(1+\epsilon) I^{-1/2}$	
with probability at least $1-n^{-(1+C')}$.
\end{corollary}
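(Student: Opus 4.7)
The plan is to derive the bound directly from Lemma~\ref{lem:initial3} by converting the $\theta$-weighted misclassification bound into an unweighted one, then expressing the resulting rate in terms of $I$ using the regularity assumptions of the corollary.

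First, I will pass from the weighted sum in Lemma~\ref{lem:initial3} to the loss $\ell(\hat z, z)$. Let $\pi^\star$ attain $\min_{\pi \in \Pi_k}\sum_{\{i:\hat z(i)\neq\pi(z(i))\}}\theta_i$. Then
\begin{equation*}
\min_{\pi\in\Pi_k}\sum_{\{i:\hat z(i)\neq\pi(z(i))\}}\theta_i \;\geq\; \bigl(\min_i\theta_i\bigr)\cdot\bigl|\{i:\hat z(i)\neq\pi^\star(z(i))\}\bigr| \;\geq\; \bigl(\min_i\theta_i\bigr)\,n\,\ell(\hat z, z).
\end{equation*}
Together with the assumption $\min_i\theta_i=\Omega(1)$, this lets me extract $n\,\ell(\hat z,z)$ on the left-hand side up to a constant factor.

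Next, I will plug in the regularity conditions $k=O(1)$ and $\|\theta\|_\infty=O(1)$ into the right-hand side of Lemma~\ref{lem:initial3}. The factor $k^{5/2}$ becomes a constant, and $\sqrt{n\|\theta\|_\infty^2 p+1}$ is of order $\sqrt{np+1}$. Since $p\geq n^{-1}$ gives $np\geq 1$, I have $\sqrt{np+1}\asymp \sqrt{np}$. Combining this with the display above yields, with high probability,
\begin{equation*}
\ell(\hat z,z)\;\leq\; C(1+\epsilon)\,\frac{\sqrt{np}}{n(p-q)}.
\end{equation*}

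The final step is to show $\sqrt{np}/[n(p-q)]\asymp I^{-1/2}$. Using $\min_i\theta_i=\Omega(1)$, $\|\theta\|_\infty=O(1)$ and $k=O(1)$ in formula \eqref{eq:formula-I}, every summand in $\exp(-I)$ is of order $\exp(-c\,n(\sqrt{p}-\sqrt{q})^2)$ for some constant $c$, so $I\asymp n(\sqrt{p}-\sqrt{q})^2$. Meanwhile, $1<p/q=O(1)$ gives $\sqrt{p}+\sqrt{q}\asymp\sqrt{p}$, hence
\begin{equation*}
p-q=(\sqrt{p}-\sqrt{q})(\sqrt{p}+\sqrt{q})\asymp \sqrt{p}\,(\sqrt{p}-\sqrt{q}),
\end{equation*}
which yields $\sqrt{np}/[n(p-q)]\asymp 1/[\sqrt{n}(\sqrt{p}-\sqrt{q})]\asymp I^{-1/2}$. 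Substituting back gives $\ell(\hat z,z)\leq C(1+\epsilon)I^{-1/2}$ on the same high-probability event $1-n^{-(1+C')}$ provided by Lemma~\ref{lem:initial3}.

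The argument is essentially algebraic and contains no real obstacle — all the probabilistic work is already packaged inside Lemma~\ref{lem:initial3}. The one place requiring a little care is verifying $p/q=O(1)$ (already in the hypotheses of Lemma~\ref{lem:initial3}) together with $\min_i\theta_i=\Omega(1)$ and $\|\theta\|_\infty=O(1)$ are jointly enough to pin down $I$ up to multiplicative constants; without the lower bound on $\min_i\theta_i$ the summands in \eqref{eq:formula-I} could be dominated by a few large-$\theta_i$ terms and $I$ would no longer be of order $n(\sqrt{p}-\sqrt{q})^2$, which is exactly why the corollary needs the extra $\min_i\theta_i=\Omega(1)$ hypothesis.
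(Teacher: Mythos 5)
Your proposal is correct and follows essentially the same route as the paper's own proof: lower-bound the weighted misclassification count by $(\min_i\theta_i)\,n\ell(\hat z,z)$, simplify the right side of Lemma~\ref{lem:initial3} under $k=O(1)$, $\|\theta\|_\infty=O(1)$, $p\geq n^{-1}$, and then relate $\sqrt{n}\,(\sqrt p-\sqrt q)^{-1}$ to $I^{1/2}$.

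One small remark on your closing paragraph: you only need the one-sided bound $I\lesssim n(\sqrt p-\sqrt q)^2$ to conclude $\sqrt{np}/[n(p-q)]\lesssim I^{-1/2}$, and that direction follows from $\|\theta\|_\infty=O(1)$ alone via $e^{-I}\geq\exp\bigl(-\|\theta\|_\infty\,\frac{n}{\beta k}(\sqrt p-\sqrt q)^2\bigr)$, exactly as in the paper. The hypothesis $\min_i\theta_i=\Omega(1)$ is needed in this corollary primarily for the first step (turning $\sum_{\text{misclassified}}\theta_i$ into $n\ell$), not for pinning down $I$; your two-sided $I\asymp n(\sqrt p-\sqrt q)^2$ is harmless but slightly stronger than what the argument requires.
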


\paragraph{Error rate for the refinement stage}

As exemplified in Corollary \ref{cor:ini}, the convergence rate for the initialization step is typically only polynomial in $I$ as opposed to the exponential rate in the minimax rate.
Thus, there is room for improvement.
In what follows, we show that a specific way of applying Algorithm \ref{alg:refine} on the output of Algorithm \ref{alg:algl3} leads to significant performance enhancement in terms of misclassification proportion.
To this end, let us first state in Algorithm \ref{alg:prove_refine} the combined algorithm for which we are able to establish the improved error bounds.
Here and after, for any $i\in [n]$, let $A_{-i}\in \{0,1\}^{(n-1)\times (n-1)}$ be the submatrix of $A$ obtained from removing the $i\Th$ row and column of $A$.

\begin{algorithm}[!tbh]\label{alg:prove_refine}
	\SetAlgoLined
	\KwData{Adjacency matrix $A\in\{0,1\}^{n\times n}$ and number of clusters $k$;}
	\KwResult{Clustering label estimator $\hat{z}\in[k]^n$;}
	\caption{A Provable Version of Algorithm \ref{alg:algl3} $+$ Algorithm \ref{alg:refine}}
	\nl For each $i\in[n]$, apply Algorithm \ref{alg:algl3} to $A_{-i}$. The result, which is a vector of dimension $n-1$, is stored in $(\hat{z}^0_{-i}(1),...,\hat{z}^0_{-i}(i-1),\hat{z}^0_{-i}(i+1),...,\hat{z}^0_{-i}(n))$\;
	\nl For each $i\in[n]$, the $i$th entry of $\hat{z}_{-i}^0$ is set as
	$$\hat{z}_{-i}^0(i)=\argmax_{u\in[k]}\frac{1}{|\{j:\hat z_{-i}^0(j)=u\}|}\sum_{j:\hat z_{-i}^0(j)=u}A_{ij};$$
	\nl Set $\hat{z}(1)=\hat{z}_{-1}^0(1)$. For each $i\in\{2,...,n\}$, set
	\begin{equation}
	\hat{z}(i)=\argmax_{u\in[k]}|\{j:\hat{z}_{-1}^0(j)=u\}\cap\{j:\hat{z}_{-i}^0(j)=\hat{z}_{-i}^0(i)\}|.\label{eq:consensus}
	\end{equation}
\end{algorithm}

\begin{remark}
		\vskip -0.75em
The last step (\ref{eq:consensus}) of Algorithm \ref{alg:prove_refine} constructs a final label estimator $\hat{z}$ from $\hat{z}_{-1}^0,\hat{z}_{-2}^0,...,\hat{z}_{-n}^0$. Since the labels given by $\hat{z}_{-1}^0,\hat{z}_{-2}^0,...,\hat{z}_{-n}^0$ are only comparable after certain permutations in $\Pi_k$, we need this extra step to resolve this issue. 
\end{remark}

\begin{remark}
	\vskip -0.75em
Algorithm \ref{alg:prove_refine} is a theoretically justifiable version for combining Algorithm \ref{alg:algl3} and Algorithm \ref{alg:refine}. In order to obtain a rate-optimal label assignment for the $i\Th$ node, we first apply the initial clustering procedure in Algorithm \ref{alg:algl3} on the sub-network consisting of the remaining $n-1$ nodes and the edges among them. Then, one applies the refinement procedure in Algorithm \ref{alg:refine} to assign a label for the $i\Th$ node. 
The independence between the initialization and the refinement stages facilitates the technical arguments in the proof. 
However, in practice, one can simply apply Algorithm \ref{alg:algl3} followed by Algorithm \ref{alg:refine}. 
The numerical difference from Algorithm \ref{alg:prove_refine} is negligible in all the data examples we have examined.
\end{remark}

\subparagraph{A special case: almost equal community sizes}
In the special case where the community sizes are almost equal, we can show that $\hat{z}$ output by Algorithm \ref{alg:prove_refine} achieves the minimax rate. 
\begin{theorem}
	\label{thm:eq-size-I}
Under the conditions of \prettyref{lem:initial3},
we further assume $\beta = 1$, $k=O(1)$, $\min_i\theta_i = \Omega(1)$, $\delta = o(\frac{p-q}{p})$,  $\norm{\theta}_\infty^2 p\geq n^{-1}$, and $\frac{(1+\epsilon)\norm{\theta}_{\infty} p^{3/2}}{\sqrt{n}(p-q)^2}=o(1)$.
Then there is a sequence $\eta = o(1)$ such that the output $\hat{z}$ of Algorithm \ref{alg:prove_refine} satisfies
\begin{equation*}
\lim_{n\to\infty} \inf_{\calP_n'(\theta,p,q,k,\beta;\delta,\alpha)} \Prob\sth{\ell(\hat{z},z)\leq \exp\big(-(1-\eta)I\big)} = 1.
\end{equation*}
\end{theorem}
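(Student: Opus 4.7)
The plan is to exploit the leave-one-out construction in Algorithm \ref{alg:prove_refine} so that, for every $i$, the initialization $\hat{z}^0_{-i}$ is independent of the $i\Th$ row of $A$. This decouples the refinement step from the initialization and reduces the per-node analysis to a clean testing problem.

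First, I would apply Lemma \ref{lem:initial3} to each sub-network obtained from $A_{-i}$. Under the assumed scaling (in particular $\|\theta\|_\infty^2 p \geq n^{-1}$, $\min_j\theta_j=\Omega(1)$, $k=O(1)$, and $\frac{(1+\epsilon)\|\theta\|_\infty p^{3/2}}{\sqrt{n}(p-q)^2}=o(1)$), a reasoning parallel to Corollary \ref{cor:ini} gives that on an event of probability at least $1-n^{-C'}$, there exists a permutation $\pi_i\in\Pi_k$ under which $|\{j\neq i:\hat{z}^0_{-i}(j)\neq \pi_i(z(j))\}|/n = o(1)$, with the $o(1)$ factor being polynomial in $I^{-1}$. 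A union bound over $i\in[n]$ absorbs a factor of $n$ into the failure probability and still leaves a sequence $\eta_1 = o(1)$ controlling all initializations simultaneously.

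Second, I would condition on this good initialization event and analyze the refinement in step 2. For each $i$ and each label $u$, write
\[
\hat{T}_u(i) \;=\; \frac{1}{|\{j:\hat{z}^0_{-i}(j)=u\}|}\sum_{j:\hat{z}^0_{-i}(j)=u} A_{ij}
\;=\; T_u^\star(i) + \Delta_u(i),
\]
where $T_u^\star(i)$ is the oracle statistic with $\hat{z}^0_{-i}$ replaced by $\pi_i\circ z$ (restricted to indices $\neq i$). The perturbation $\Delta_u(i)$ is a sum of $A_{ij}$ differences over the $o(n)$ coordinates where $\hat z^0_{-i}$ disagrees with $\pi_i\circ z$, divided by a community size of order $n/k$. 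A Bernstein bound (conditionally on $\hat z^0_{-i}$) shows $\Delta_u(i) = o(p-q)$ with probability $1-o(1/n)$, which is asymptotically negligible compared to the separation $\Theta(p-q)$ between $\E T_u^\star(i)$ under the two competing labelings. Consequently, the event $\{\hat{z}^0_{-i}(i)\neq \pi_i(z(i))\}$ is contained in a testing error event of the form (\ref{eq:2-test})--(\ref{eq:count}). Applying the optimal Chernoff exponent for normalized neighbor voting (to be established in Lemma \ref{lem:t-lower} and Lemma \ref{lem:t-upper} of Section \ref{sec:testing}), I obtain
\[
\Prob\sth{\hat{z}^0_{-i}(i)\neq \pi_i(z(i))} \;\leq\; \exp\bigl(-(1-\eta_2)\,\theta_i \tfrac{n}{\beta k}(\sqrt{p}-\sqrt{q})^2\bigr)
\]
for some $\eta_2=o(1)$. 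Averaging over $i$ using the definition \eqref{eq:formula-I} of $I$ yields $\Expect\bigl[\tfrac{1}{n}\sum_i \indc{\hat z^0_{-i}(i)\neq \pi_i(z(i))}\bigr] \leq \exp(-(1-\eta_3)I)$, and a Markov inequality together with $I\to\infty$ upgrades this to a high-probability bound of the stated form.

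Third, I would handle the consensus step (\ref{eq:consensus}) to produce a single label vector $\hat z$ rather than the family $\{\hat z^0_{-i}\}$. The key observation is that each $\hat z^0_{-i}$ agrees with $\pi_i\circ z$ on a $(1-o(1))$-fraction of nodes. For any two indices $1$ and $i$, the intersection $\{j:\hat z^0_{-1}(j)=u\}\cap\{j:\hat z^0_{-i}(j)=\hat z^0_{-i}(i)\}$ is overwhelmingly dominated by the true cluster with label $\pi_1^{-1}(u)$ precisely when $\pi_i(z(i))$ corresponds to $u$ under the composition $\pi_1\circ\pi_i^{-1}$, because each cluster has size $\Theta(n/k)$ while the discrepancies from the true labelings are of size $o(n)$. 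Hence the argmax in (\ref{eq:consensus}) recovers $\pi_1(z(i))$ on all but $o(n)$ nodes, and the loss $\ell(\hat z, z)$, which already optimizes over permutations, inherits the bound from the previous step.

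The main obstacle is step two: carefully controlling the perturbation $\Delta_u(i)$ so that it does not degrade the Chernoff exponent. This is delicate because the initialization error is only $o(1)$ in proportion, and the effective signal of the testing problem is $(\sqrt{p}-\sqrt{q})^2$, so one must exploit both the small mis-clustering proportion and the fact that the $o(n)$ ``contaminating'' edges are Bernoulli with small mean. Establishing the testing lemmas used above (the sharp Cram\'er--Chernoff bound for heterogeneous Bernoulli sums with per-node parameters $\theta_j$) is the other substantive piece of work, and is where the paper's new folding argument comes in.
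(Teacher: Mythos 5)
Your overall architecture --- leave-one-out initialization, a per-node testing error bound conditional on a good initialization event, and a consensus step to reconcile label permutations --- matches the paper's strategy: Theorem~\ref{thm:eq-size-I} is derived from Theorem~\ref{thm:algo-J}, whose proof rests on an unnamed lemma giving exactly such a per-node bound, and the consensus step (\ref{eq:consensus}) is handled the same way.

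The gap is in your step two, specifically the claim that a Bernstein bound gives $\Delta_u(i) = o(p-q)$ with probability $1 - o(1/n)$. This is not implied by the stated conditions. Conditionally on a good initialization, $\Delta_u(i)$ is (up to the normalization $\Theta(n/k)$) a signed sum of roughly $\gamma n$ Bernoulli$(\Theta(\|\theta\|_\infty^2 p))$ random variables, where $\gamma \lesssim \frac{(1+\epsilon)k^{5/2}\sqrt{np}\,\|\theta\|_\infty}{n(p-q)}$ by Lemma~\ref{lem:initial3} (after dividing by $\theta_{\min}=\Omega(1)$). The expected number of contaminating ones, $\gamma n p$, may be of constant order or diverging slowly, and to obtain a $1-o(1/n)$ guarantee a Poisson-type tail forces the fluctuation threshold to be of order $\log n/\log\log n$; the resulting bound $\Delta_u(i) \lesssim \frac{k\log n}{n\log\log n}$ is \emph{not} $o(p-q)$ unless $n(p-q)\gg \log n/\log\log n$. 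The hypotheses of Theorem~\ref{thm:eq-size-I} only force $n(p-q)\gg (np)^{3/4}$, which (taking, say, $np = \log\log n$) can grow much slower than $\log n$. So the ``$\Delta_u(i)$ negligible against $\Theta(p-q)$'' step collapses precisely in the sparse, barely-consistent regime that the theorem is supposed to cover.

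The paper sidesteps this by never taking a high-probability bound on the contamination. In the lemma preceding the proof of Theorem~\ref{thm:algo-J}, the Chernoff bound is applied to the \emph{full} contaminated statistic, with the initialization event $E_1$ entering only through the indicator inside the expectation. The contamination then appears multiplicatively in the moment generating function, as the terms (\ref{eq:an2}), (\ref{eq:an3}), (\ref{eq:an4}), and on $E_1$ each such factor is bounded \emph{deterministically} by $\exp\left(O\left(\gamma n\theta_i(p-q)\right)\right)$; since $\gamma = o((p-q)/(kp))$ this is an $o(1)$ correction to the exponent, with no additive $n^{-c}$ loss beyond $\mathbb{P}(E_1^c)$. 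This is a genuinely different (and sharper) way to absorb the initialization error than your Bernstein split, and it is the essential technical ingredient: Lemma~\ref{lem:t-upper}, which you invoke, treats only the clean exactly-normalized testing problem (\ref{eq:hypo}) and does not by itself yield the exponent for a contaminated statistic with random community-size normalizations. Filling the gap would require you to prove something equivalent to that intermediate lemma, which is where most of the work of Theorem~\ref{thm:eq-size-I} actually lives.
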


Theorem \ref{thm:eq-size-I} shows that when the community sizes are almost equal, the minimax rate $\exp(-(1+o(1))I)$ can be achieved within polynomial time. We note that the conditions that we need here are stronger than those of Theorem \ref{thm:upper-nbar}. When $k=O(1)$, $\epsilon=O(1)$ and $\Omega(1)=\min_i\theta_i\leq\norm{\theta}_{\infty}=O(1)$, Theorem \ref{thm:upper-nbar} only requires $I\rightarrow\infty$, which is equivalent to $\frac{p^{1/2}}{\sqrt{n}(p-q)}=o(1)$, while Theorem \ref{thm:eq-size-I} requires $\frac{p^{3/2}}{\sqrt{n}(p-q)^2}=o(1)$. Whether the extra factor $\frac{p}{p-q}$ can be removed from the assumptions is an interesting problem to investigate in the future.

\subparagraph{General case}
We now state a general high probability error bound for Algorithm \ref{alg:prove_refine}.
To introduce this result, we define another information-theoretic quantity. For any $t\in(0,1)$, define
\begin{equation}
	\label{eq:Jtpq}
J_t(p,q)=2\left(tp+(1-t)q-p^tq^{1-t}\right).	
\end{equation}
By Jensen's inequality, it is straightforward to verify that $J_t(p,q)\geq 0$ and $J_t(p,q)=0$ if and only if $p=q$. As a special case, when $t=\frac{1}{2}$, we have 
\begin{equation}
	\label{eq:J-hf}
J_{1\over 2}(p,q)=(\sqrt{p}-\sqrt{q})^2.	
\end{equation}
For a given $z\in[k]^n$, let $n_{(1)}\leq ...\leq n_{(k)}$ be the order statistics of community sizes $\{n_{u}(z): u=1,\dots,k\}$. 
Then, we define the quantity $J$ by through
\begin{equation}
\exp(-J)=\frac{1}{n}\sum_{i=1}^n\exp\left(-\theta_i\,\left(\frac{n_{(1)}+n_{(2)}}{2}\right)\,J_{t^*}(p,q)\right)
\label{eq:J-def}
\end{equation}
with $t^* = \frac{n_{(1)}}{n_{(1)}+n_{(2)}}$.
With the foregoing definitions, the following theorem gives a general error bound for Algorithm \ref{alg:prove_refine}.

\begin{theorem}
	\label{thm:algo-J}
Under the conditions of Lemma \ref{lem:initial3}, we further assume
 that $\delta = o(\frac{p-q}{p})$, $\norm{\theta}_{\infty}^2p\geq n^{-1}$,
\begin{align}
\label{eqn:algo-J1}
\frac{(1+\epsilon)k^{5/2}\norm{\theta}_{\infty}\sqrt{p}}{\sqrt{n}(p-q)}=o\left(\frac{p-q}{kp}\right), \quad \mbox{and}\\
\label{eqn:algo-J2}
\min_{\gamma\geq 0}\left\{n^{-1}|\{i\in[n]:\theta_i\leq\gamma\}|+\frac{(1+\epsilon)k^{5/2}\norm{\theta}_{\infty}\sqrt{p}}{\gamma\sqrt{n}(p-q)}\right\}=o\left(\frac{p-q}{k^2p}\right).
\end{align}
Then there is a sequence $\eta = o(1)$ such that the output $\hat{z}$ of Algorithm \ref{alg:prove_refine} satisfies
\begin{equation*}
\lim_{n\to\infty} \inf_{\mathcal{P}_n'(\theta,p,q,k,\beta;\delta,\alpha)} \Prob\sth{\ell(\hat{z},z)\leq \exp\big(-(1-\eta)J\big)} = 1.
\end{equation*}
%
\end{theorem}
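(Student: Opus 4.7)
}
The plan is to exploit the leave-one-out structure of Algorithm \ref{alg:prove_refine} so that each refinement step reduces to a one-parameter hypothesis test, then control the Cram\'er--Chernoff exponent of that test uniformly over the nodes. First, I would apply Lemma \ref{lem:initial3} to the subnetwork $A_{-i}$ for each $i \in [n]$, together with a union bound over $i$, to obtain a high-probability event $\calE$ on which every leave-one-out initializer $\hat z_{-i}^0$ satisfies $\min_{\pi\in\Pi_k}\sum_{\{j\neq i:\hat z_{-i}^0(j)\neq\pi(z(j))\}}\theta_j\lesssim\tfrac{(1+\epsilon)k^{5/2}\sqrt{n\|\theta\|_\infty^2p+1}}{p-q}$. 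The crucial point is that $\hat z_{-i}^0$ is a function of $A_{-i}$ and is therefore \emph{independent} of the edge variables $\{A_{ij}:j\neq i\}$ that drive the refinement step for node $i$. This decoupling is what makes a clean conditional analysis possible.

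Next, condition on $\calE$ and, given the permutation $\pi=\pi_i$ achieving the minimum above, regard $\hat z_{-i}^0$ as a nearly correct label vector. For each pair of true labels $u=z(i)$ and $v\neq u$, set
\begin{equation*}
T_{u}(i)=\tfrac{1}{|\{j:\hat z_{-i}^0(j)=\pi(u)\}|}\sum_{j:\hat z_{-i}^0(j)=\pi(u)}A_{ij},\qquad T_{v}(i)\text{ analogously},
\end{equation*}
so that the refinement misclassifies node $i$ as $v$ only if $T_v(i)\geq T_u(i)$. Writing $T_u(i)-T_v(i)$ as a sum of independent Bernoulli terms and separating the ``good'' indices (where $\hat z_{-i}^0$ agrees with $\pi\circ z$) from the ``bad'' ones, the contribution of the good indices has the distribution of a difference of two averages of $\mathrm{Bern}(\theta_i\theta_j p)$ and $\mathrm{Bern}(\theta_i\theta_j q)$ variables over $\approx n_u$ and $\approx n_v$ terms. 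A standard Cram\'er--Chernoff computation, optimized over the tilt parameter $t$, yields
\begin{equation*}
\Prob\{T_v(i)\geq T_u(i)\mid\hat z_{-i}^0\}\leq \exp\!\Big(-(1+o(1))\,\theta_i\,\tfrac{n_u n_v}{n_u+n_v}\,J_{n_u/(n_u+n_v)}(p,q)\Big),
\end{equation*}
after using $\delta=o(\tfrac{p-q}{p})$ and $\frac{1}{n_u}\sum_{z(j)=u}\theta_j\approx 1$ to absorb the degree-correction normalization into the $1+o(1)$ factor. Taking a union over the $k-1$ alternatives $v$ and choosing the worst (pair of smallest) community sizes bounds the per-node error probability by $\exp(-(1-\eta/2)\theta_i\cdot\tfrac{n_{(1)}+n_{(2)}}{2}\cdot J_{t^*}(p,q))$ up to a $k$ factor absorbed by $\log k=o(J)$.

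The main obstacle is controlling the contribution of the ``bad'' indices and of small-$\theta$ nodes to the Chernoff bound, since these perturbations must be shown to be negligible relative to the leading exponent. This is exactly what the two displayed conditions in the theorem buy: \eqref{eqn:algo-J1} ensures that the average perturbation from misclassified nodes, which is of order $\theta_i \cdot p \cdot (\text{misclassification weight})$, is $o((p-q)/k)$ times the size of the leading drift $\theta_i(n/k)(p-q)$, while \eqref{eqn:algo-J2} is tailored to discard the negligible set of nodes with $\theta_i\leq\gamma$ whose per-node exponent is too small, by choosing $\gamma$ to balance the two terms. Concretely, I would partition $[n]$ into a ``typical'' set on which the per-node Chernoff bound gives $\exp(-(1-\eta)\theta_i\cdot(n_{(1)}+n_{(2)})J_{t^*}(p,q)/2)$ and a negligible set whose size is $o(\exp(-(1-\eta)J)\cdot n)$ by \eqref{eqn:algo-J2}.

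Finally, I would take expectations and apply Markov's inequality on the fraction of misclassified nodes: summing the per-node probability bounds over the typical set yields $n\exp(-(1-\eta)J)$, which gives $\ell(\hat z,z)\leq \exp(-(1-\eta')J)$ with high probability modulo the label permutation. The consensus step \eqref{eq:consensus} reconciles the separate permutations $\pi_i$ returned by the $n$ leave-one-out runs: on the event $\calE$, each $\hat z_{-i}^0$ disagrees with $\hat z_{-1}^0$ on a set of $\theta$-weight at most $o(n/k)$, so the intersection in \eqref{eq:consensus} is dominated by the correct label class, establishing that $\hat z$ is a single well-defined relabeling of the refined vectors and completing the proof.
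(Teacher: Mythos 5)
Your overall architecture matches the paper's: prove a per-node lemma showing that, conditionally on a good leave-one-out initialization $\hat z_{-i}^0$ (independent of the edges at node $i$), a Cram\'er--Chernoff computation bounds $\Prob(\hat z_{-i}^0(i)\neq\pi_i(z(i)))$ by $\exp(-(1-o(1))\theta_i\cdot(n_{(1)}+n_{(2)})J_{t^*}(p,q)/2)$ plus a small failure probability; then sum and use Markov, with the consensus step handling the $n$ possibly-different permutations. Two points should be fixed, however.

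First, the exponent you quote from the Chernoff computation, $\frac{n_u n_v}{n_u+n_v}J_{n_u/(n_u+n_v)}(p,q)$, is not the right one. With the optimal tilt $\lambda=\frac{m_1m_2}{m_1+m_2}\log(p/q)$, the log-MGF of the difference of normalized neighbor counts evaluates to $-\frac{m_1+m_2}{2}J_{m_1/(m_1+m_2)}(p,q)$, not $-\frac{m_1m_2}{m_1+m_2}J_{m_1/(m_1+m_2)}(p,q)$; the latter is the Gaussian/$t$-test effective-sample-size formula and is off by roughly a factor of two when the sizes are comparable. If you carry the calculation through honestly you will land on the correct constant, but as written the bound would not match the definition of $J$ in \eqref{eq:J-def} and hence would not prove the theorem at its sharp rate.

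Second, you have mis-identified the role of \eqref{eqn:algo-J2}. You propose to use it to discard the set $\{i:\theta_i\leq\gamma\}$ as ``negligible, of size $o(n\exp(-(1-\eta)J))$.'' This does not follow: \eqref{eqn:algo-J2} only forces the left side to be $o\!\left(\frac{p-q}{k^2p}\right)$, which is a fixed polynomial-ish rate (indeed $\Theta(1)$ when $k=O(1)$ and $p/q=O(1)$), whereas $\exp(-J)\to 0$ exponentially; there is no reason the small-$\theta$ set should be that small, and discarding it in the final Markov step would break the argument. The actual purpose of \eqref{eqn:algo-J2} is upstream: Lemma \ref{lem:initial3} controls the $\theta$-weighted initialization error $\sum_{j}\theta_j\indc{\hat z_{-i}^0(j)\neq\pi(z(j))}$ but not the raw count, and the per-node Chernoff bound needs the unweighted misclassification fraction $\gamma_2 = n^{-1}|\{j:\hat z_{-i}^0(j)\neq\pi(z(j))\}|$ to be $o((p-q)/(k^2p))$ so that the estimated community sizes $m_u$ and sums $\hat\Theta_u$ are close enough to $n_u$ for the perturbation terms in the Chernoff exponent to be negligible. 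The threshold-at-$\gamma$ split is precisely the mechanism for converting the weighted bound into an unweighted one, and that is what \eqref{eqn:algo-J2} encodes. Once the per-node bound is in hand, the final Markov step needs no typical/atypical split at all: the small-$\theta$ nodes are already accounted for inside the sum $\sum_i\exp(-(1-\eta)\theta_i(n_{(1)}+n_{(2)})J_{t^*}/2)=n\exp(-(1-\eta')J)$.
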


Theorem \ref{thm:algo-J} gives a general error bound for the performance of Algorithm \ref{alg:prove_refine}. It is easy to check that the conditions (\ref{eqn:algo-J1}) and (\ref{eqn:algo-J2}) are satisfied under the settings of Theorem \ref{thm:eq-size-I}. Therefore, Theorem \ref{thm:eq-size-I} is a special case of Theorem \ref{thm:algo-J}.
Theorem \ref{thm:algo-J} shows that Algorithm \ref{alg:prove_refine} converges at the rate $\exp(-(1+o(1))J)$. According to the properties of $J_t(p,q)$ stated in Appendix \ref{sec:property}, one can show that when $n_{(1)} = (1+o(1))n_{(2)}$, $J = (1+o(1))I$, and that in general
$$n_{(1)}(\sqrt{p}-\sqrt{q})^2\leq\left(\frac{n_{(1)}+n_{(2)}}{2}\right)\,J_{t^*}(p,q)\leq \left(\frac{n_{(1)}+n_{(2)}}{2}\right)(\sqrt{p}-\sqrt{q})^2.$$
Using this relation, we can state the convergence rate in Theorem \ref{thm:algo-J} using the quantity $I$.

\begin{corollary}
\label{coro:I-J}
Under the conditions of \prettyref{thm:algo-J}, 
there is a sequence $\eta= o(1)$ such that the output $\hat{z}$ of Algorithm \ref{alg:prove_refine} satisfies
\begin{align*}
\lim_{n\to\infty} & \inf_{\mathcal{P}_n'(\theta,p,q,2,\beta;\delta,\alpha)} \Prob\sth{\ell(\hat{z},z) \leq \exp\big(-(1-\eta) \beta^{-1}I\big)} = 1, \\
\lim_{n\to\infty} & \inf_{\mathcal{P}_n'(\theta,p,q,k,\beta;\delta,\alpha)} \Prob\sth{\ell(\hat{z},z) \leq \exp\big(-(1-\eta)I\big)} = 1, \text{ for }k\geq 3.
\end{align*}
\end{corollary}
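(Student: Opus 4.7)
The plan is to deduce Corollary~\ref{coro:I-J} directly from Theorem~\ref{thm:algo-J} by establishing a lower bound on $J$ in terms of $I$. Theorem~\ref{thm:algo-J} already gives $\ell(\hat{z},z)\leq\exp(-(1-\eta')J)$ with probability tending to one for some $\eta'=o(1)$, so it suffices to show $J\geq(1-o(1))\beta^{-1}I$ when $k=2$ and $J\geq(1-o(1))I$ when $k\geq 3$. A single $\eta=o(1)$ absorbing both $\eta'$ and this comparison error then yields the corollary.

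The first ingredient is the inequality $\tfrac{n_{(1)}+n_{(2)}}{2}J_{t^*}(p,q)\geq n_{(1)}(\sqrt{p}-\sqrt{q})^2$ recorded just before the corollary statement, together with the community-size constraint $n_{(1)}\geq n/(\beta k)-1$ that is built into $\calP_n(\theta,p,q,k,\beta;\delta)$. Since $I\to\infty$ forces $n\to\infty$ while $(\sqrt{p}-\sqrt{q})^2$ remains bounded, the additive $-1$ is asymptotically negligible, so
\begin{equation*}
\frac{n_{(1)}+n_{(2)}}{2}J_{t^*}(p,q)\;\geq\;(1-o(1))\,c_k\,I_{\mathrm{term}},
\end{equation*}
where $(c_k,I_{\mathrm{term}})=(\beta^{-1},\tfrac{n}{2}(\sqrt{p}-\sqrt{q})^2)$ for $k=2$ and $(1,\tfrac{n}{\beta k}(\sqrt{p}-\sqrt{q})^2)$ for $k\geq 3$.

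The second ingredient converts this termwise bound on the exponent into a bound on $J$ itself. Substituting the previous inequality into the definition \eqref{eq:J-def} of $\exp(-J)$ and setting $\alpha:=(1-o(1))c_k$, which lies in $(0,1]$ since $\beta\geq 1$, I would apply Jensen's inequality to the concave map $x\mapsto x^{\alpha}$ to obtain
\begin{equation*}
\exp(-J)\;\leq\;\frac{1}{n}\sum_{i=1}^{n}\bigl(\exp(-\theta_i I_{\mathrm{term}})\bigr)^{\alpha}\;\leq\;\left(\frac{1}{n}\sum_{i=1}^{n}\exp(-\theta_i I_{\mathrm{term}})\right)^{\alpha}=\exp(-\alpha I).
\end{equation*}
This gives $J\geq\alpha I=(1-o(1))c_k I$, which is precisely what is needed in each of the two cases.

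No step here is deep, but one point requires care: since $I$ diverges, a multiplicative $(1-o(1))$ factor inside the exponential cannot be discarded casually. Pulling that factor outside the sum via Jensen handles this cleanly, and combining the resulting bound on $J$ with Theorem~\ref{thm:algo-J} yields the stated high-probability bound in both regimes.
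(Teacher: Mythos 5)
Your proof is correct and follows essentially the same route as the paper's: both deduce the corollary from Theorem~\ref{thm:algo-J} by establishing $J\geq(1-o(1))\beta^{-1}I$ for $k=2$ and $J\geq(1-o(1))I$ for $k\geq 3$, using the pointwise bound $\tfrac{n_{(1)}+n_{(2)}}{2}J_{t^*}(p,q)\geq n_{(1)}(\sqrt{p}-\sqrt{q})^2$ together with $n_{(1)}\geq n/(\beta k)-1$, and then Jensen's inequality for the concave map $x\mapsto x^{c}$ with $c\in(0,1]$. In fact your version is stated a bit more carefully than the printed proof: the displayed inequality in the paper's $k=2$ argument carries an exponent $\beta$ on the right-hand side where $1/\beta$ is clearly intended (as written it reverses for $\beta>1$), and your uniform treatment of both cases via a single exponent $(1-o(1))c_k\in(0,1]$ applies Jensen in the correct direction and handles the additive $-1$ in the size constraint cleanly (though you may want to avoid reusing $\alpha$, which the paper already reserves for a parameter of $\mathcal{P}_n'$).
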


Therefore, when $k\geq 3$, the minimax rate $\exp(-(1+o(1))I)$ is achieved by Algorithm \ref{alg:prove_refine}. The only situation where the minimax rate is not achieved by Algorithm \ref{alg:prove_refine} is when $k=2$ and $\beta>1$. For this case, there is an extra $\beta^{-1}$ factor on the exponent of the convergence rate.

\begin{remark}
		\vskip -0.75em
If we further assume that $\min_{i\neq j} B_{ij} = \Omega(q)$, a careful examination of the proofs shows that we can improve the term $k^{5/2}$ in the conclusion of \prettyref{lem:initial3} and in the conditions \eqref{eqn:algo-J1} and \eqref{eqn:algo-J2} to $k^{3/2}$.
Since it is unclear what the optimal power exponent for $k$ is in these circumstances, we do not pursue it explicitly in this paper.	
\end{remark}

\section{Numerical Results}
\label{sec:nume}

In this section,
we present numerical experiments on simulated datasets generated from DCBMs. 
In particular, 
we compare the performance of two versions of our algorithm with two state-of-the-art methods: SCORE \citep{jin2015fast} and CMM \citep{chen2015convexified} in two different scenarios. 
On simulated data examples, both versions of our algorithm outperformed SCORE in terms of misclassification proportion. 
The performance of CMM was comparable to our algorithm.
However, our algorithm not only demonstrated slightly better accuracy on the simulated datasets when compared with CMM, but it also enjoys the advantage of easy implementation, fast computation and scalability to networks of large sizes since it does not involve convex programming.


\paragraph{Scenario 1}
We set $n=300$ nodes and $k=2$. 
The sizes of the two communities were set as $100$ and $200$, respectively. 
The off-diagonal entries of the adjacency matrix were generated as $A_{ij} = A_{ji}\stackrel{ind.}{\sim}\text{Bern}(\theta_i\theta_j p)$ if $z(i)=z(j)$ and $A_{ij}= A_{ji}\stackrel{ind.}{\sim}\text{Bern}(\theta_i\theta_jq)$ if $z(i)\neq z(j)$. 
We let $p=0.1$ and $q=3p/10$. 
The degree-correction parameters were set as $\theta_i = |Z_i|+1-(2\pi)^{-1/2}$ where $Z_i\stackrel{iid}{\sim} N(0,0.25)$ for $i=1,\dots,n$. 
It is straightforward to verify that $\Expect \theta_i=1$.

We compare misclassification proportions of the following five algorithms\footnote{The numerical performance of Algorithm \ref{alg:prove_refine} was indistinguishable from that of the second algorithm in the list in all the experiments conducted.}:
\begin{enumerate}
\item The weighted $k$-medians procedure in Algorithm \ref{alg:algl3};
\item Refinement of the output of Algorithm \ref{alg:algl3} by Algorithm \ref{alg:refine};
\item Iterate Algorithm \ref{alg:refine} $10$ times after initialization by Algorithm \ref{alg:algl3}.
\item The SCORE method in \cite{jin2015fast};
\item The CMM method in \cite{chen2015convexified}.
\end{enumerate}

\begin{figure}[h]
\centering
\includegraphics[width=0.58\textwidth]{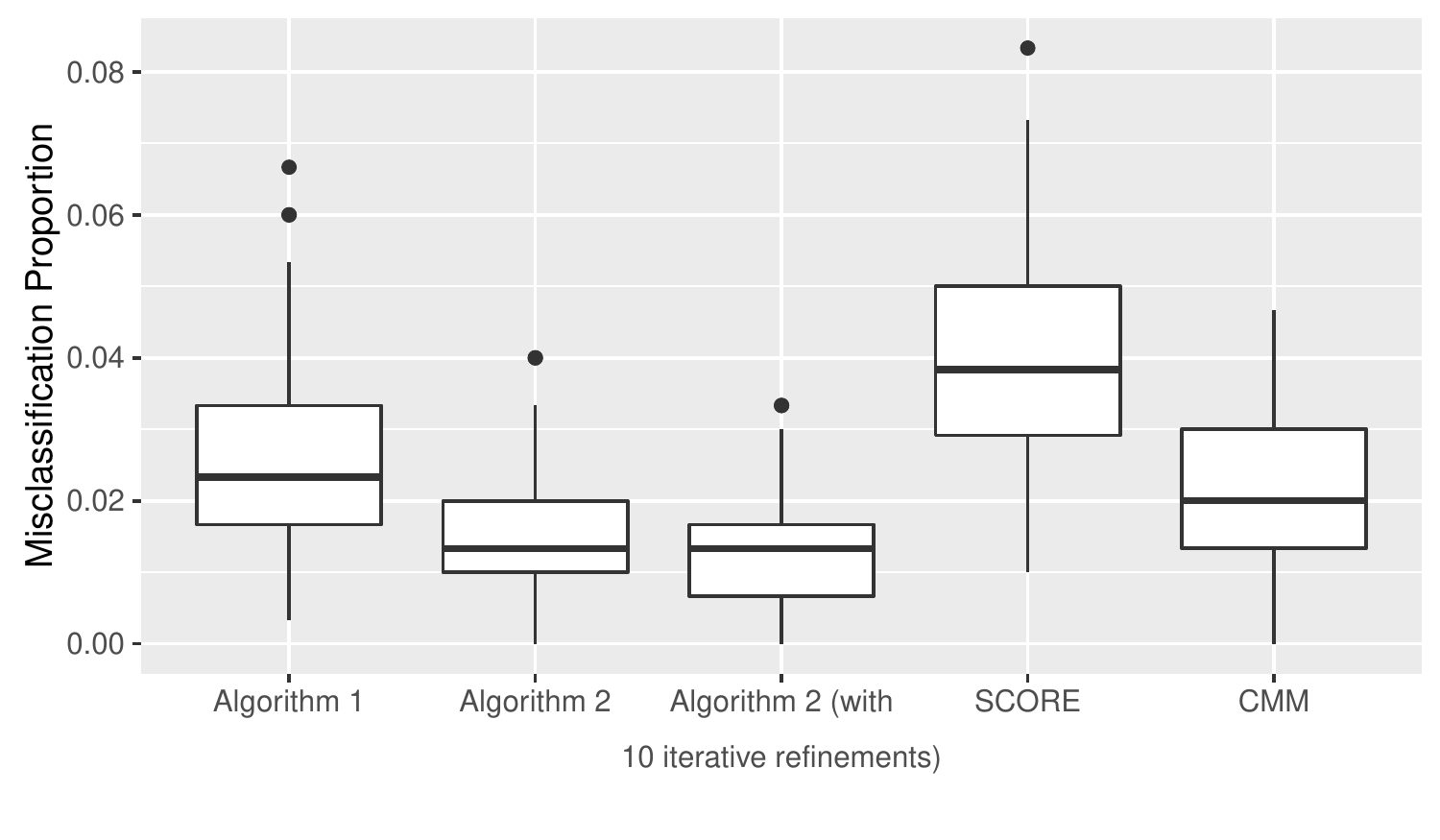}
\includegraphics[width=0.4\textwidth]{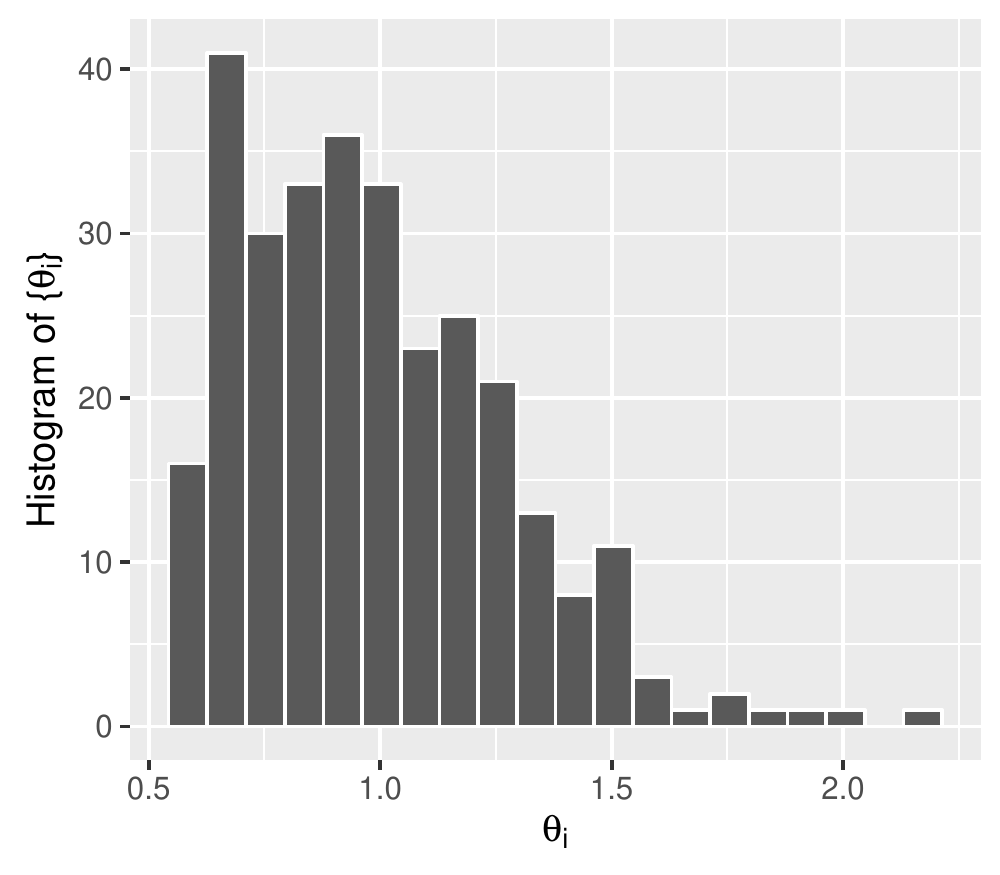}
\caption{Left panel: boxplots of misclassification proportions for the five algorithms over $100$ independent repetitions. Right panel: histogram of $\theta_i$. \label{fig:num1}}
\end{figure}

We conducted the experiments with $100$ independent repetitions and summarize the performances of the five algorithms through boxplots of misclassification proportions.
\prettyref{fig:num1} shows that our refinement step (Algorithm \ref{alg:refine}) significantly improved the performance of the initialization step (Algorithm \ref{alg:algl3}). 
Moreover, it helped to further reduce the error if we apply the refinement step for a few more iterations. 
Among the five algorithms, our proposed procedures give the best performance.
The CMM algorithm performed slightly worse than our procedures with refinement, but was better than Algorithm \ref{alg:algl3} and SCORE.




\paragraph{Scenario 2}

Here, we set $n=800$ and $k=4$ and all community sizes were set equal. 
The adjacency matrix was generated in the same way as in Scenario 1 except that
$\theta_i$'s were independent draws from a Pareto distribution with density function $f(x)=\frac{\alpha \beta^\alpha}{x^{\alpha+1}}1_{\{x\geq \beta\}}$, with $\alpha=5$ and $\beta=4/5$. 
The choice of $\alpha$ and $\beta$ ensures that $\mathbb{E}\theta_i=1$.

\begin{figure}[h]
\centering
\includegraphics[width=0.58\textwidth]{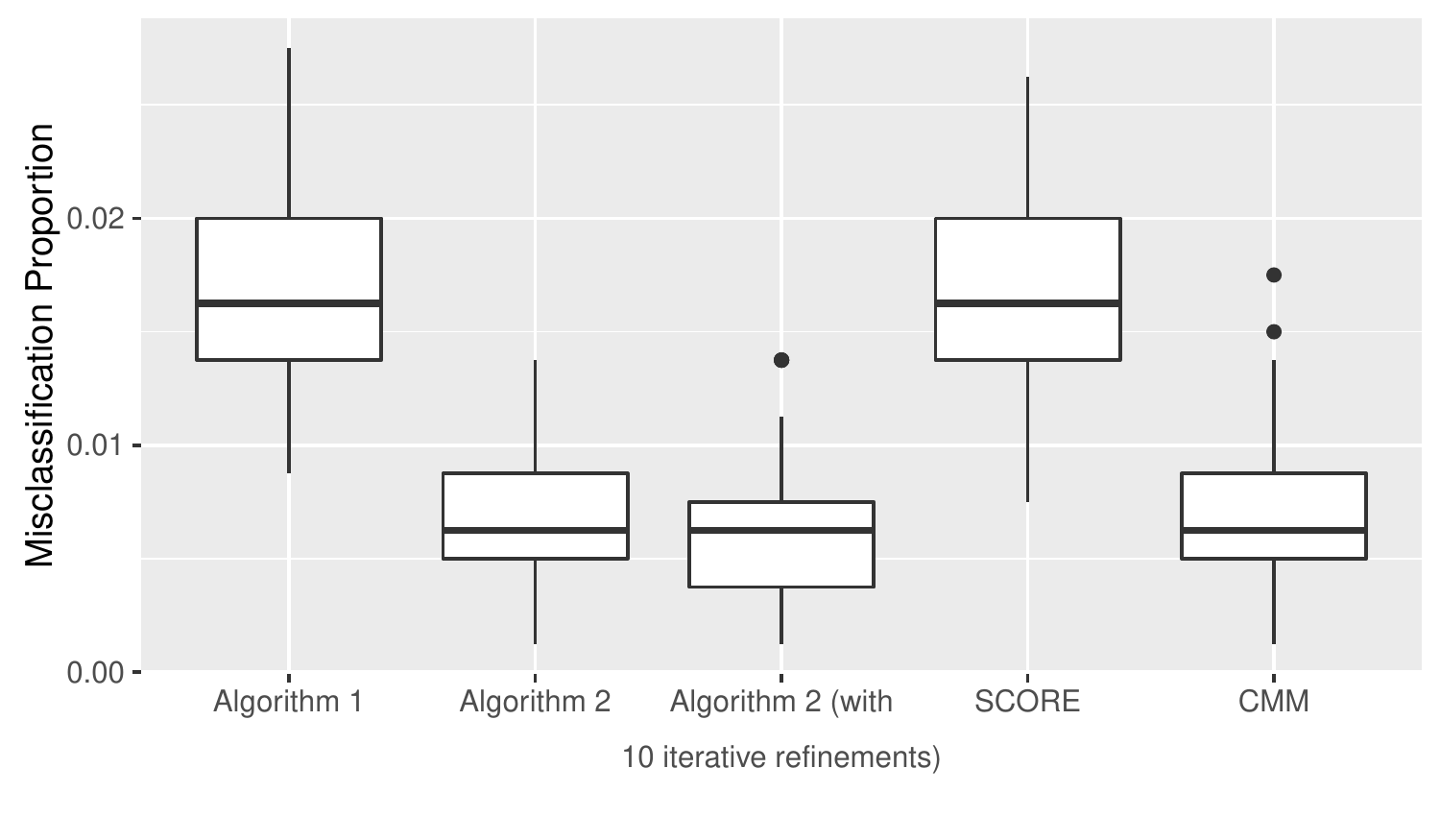}
\includegraphics[width=0.4\textwidth]{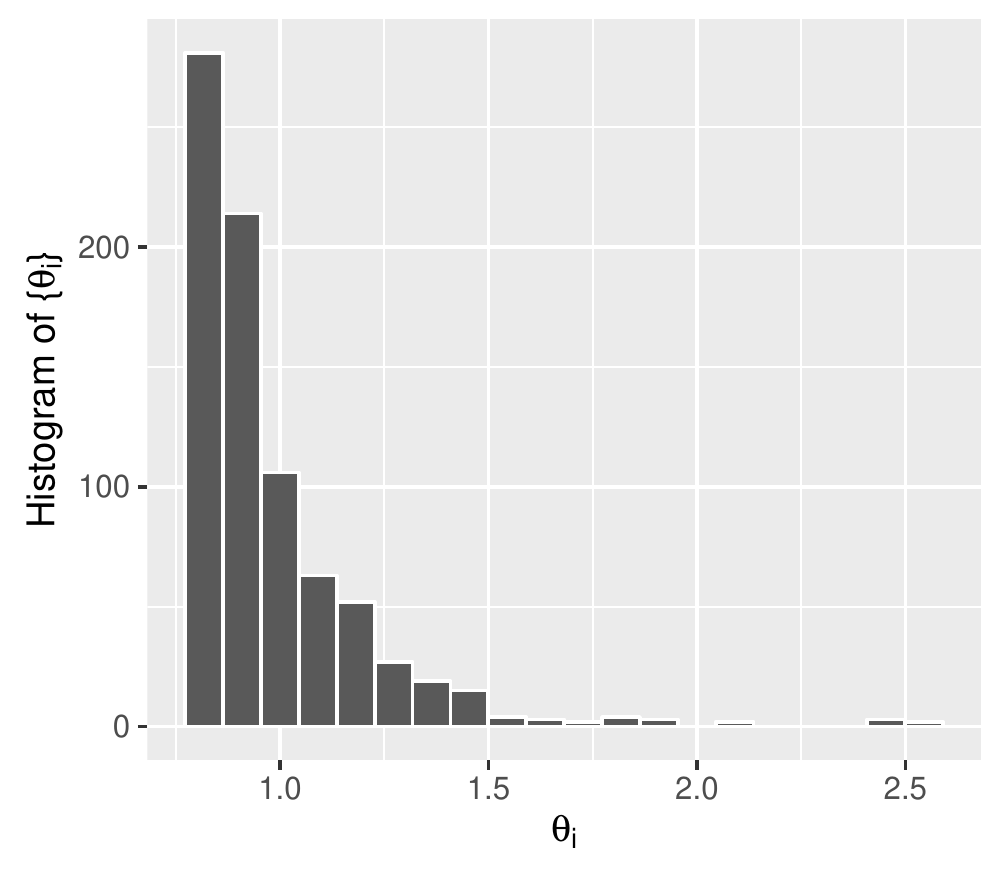}
\caption{Left panel: boxplots of misclassification proportions for the five algorithms over $100$ independent repetitions. Right panel: histogram of $\theta_i$.\label{fig:num2}}
\end{figure}

As in Scenario 1, we compare the performance of the five algorithms over $100$ independent repetitions. \prettyref{fig:num2} shows the boxplots of the misclassification proportions. 
The overall message is similar to Scenario 1, except that CMM performed almost as good as our procedures with refinement, but all of them outperformed Algorithm \ref{alg:algl3} and SCORE.
Despite its comparable performance on this task, CMM required noticeably longer running time than our procedures on the simulated datasets due to the involvement of convex programming.
Therefore, its scalability to large networks is more limited.

\section{Proofs}
\label{sec:proof}

This section presents proofs for some main results of the paper.
In Section \ref{sec:testing}, we first study a fundamental testing problem for community detection in DCBMs. 
The theoretical results for this testing problem are critical tools to prove minimax lower and upper bounds for community detection.
We then state the proof of \prettyref{thm:upper-nbar}. 
Proofs of the other main results are deferred to the appendices.



\subsection{A Fundamental Testing Problem}
\label{sec:testing}

As a prelude to all proofs, we consider the hypothesis testing problem \eqref{eq:2-test} that not only is fundamental to the study of minimax risk but also motivates the proposal of Algorithm \ref{alg:refine}.
To paraphrase the problem,
suppose $X = (X_1,\dots, X_{m},X_{m+1},\dots, X_{2m})$ have independent Bernoulli entries. 
Given $1 \geq p>q \geq 0$ and $\theta_0,\theta_1,\dots, \theta_{2m} > 0$ such that
\begin{equation}
	\label{eq:normalize-test}
\sum_{i=1}^{m}\theta_i=\sum_{i=m+1}^{2m}\theta_i=m.
\end{equation}
We are interested in understanding the minimum possible Type I+II error of testing
\begin{equation}
\label{eq:hypo}
\begin{aligned}
& H_0: X\sim \bigotimes_{i=1}^{m}\text{Bern}\left(\theta_0\theta_ip\right) \otimes \bigotimes_{i=m+1}^{2m}\text{Bern}\left(\theta_0\theta_iq\right) \\
& \quad \quad \quad \mbox{vs.}\quad
H_1: X\sim \bigotimes_{i=1}^{m}\text{Bern}\left(\theta_0\theta_i q\right) \otimes \bigotimes_{i=m+1}^{2m}\text{Bern}\left(\theta_0\theta_i p\right).	
\end{aligned}
\end{equation}
In particular, we are interested in the asymptotic behavior of the error for a sequence of such testing problems in which $p$, $q$ and the $\theta_i$'s scale with $m$ as $m\to\infty$.
First, we have the following lower bound result.
\begin{lemma}
	\label{lem:t-lower}
Suppose that as $m\to\infty$, $1<p/q = O(1)$ and $p \max_{0\leq i\leq 2m}\theta_i^2 = o(1)$. 
If $\theta_0 m (\sqrt{p}-\sqrt{q})^2 \to \infty$, 
\begin{equation*}
\inf_{\phi}\left(P_{H_0}\phi+P_{H_1}(1-\phi)\right)\geq 
\exp\left(-(1+o(1)) \theta_0 m (\sqrt{p}-\sqrt{q})^2\right).	
\end{equation*}
Otherwise if $\theta_0 m (\sqrt{p}-\sqrt{q})^2=O(1)$, there exists a constant $c\in (0,1)$ such that
\begin{equation*}
\inf_{\phi}\left(P_{H_0}\phi+P_{H_1}(1-\phi)\right)\geq c.
\end{equation*}
\end{lemma}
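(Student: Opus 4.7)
My plan is to reduce to the likelihood-ratio test via Neyman--Pearson and then apply a Hellinger-midpoint change of measure to get a sharp lower bound driven by the Bhattacharyya coefficient. Writing $L = \log(dP_{H_1}/dP_{H_0}) = \sum_{i=1}^{2m} L_i$, Neyman--Pearson gives
\[\inf_\phi\bigl(P_{H_0}\phi + P_{H_1}(1-\phi)\bigr) = P_{H_0}(L > 0) + P_{H_1}(L \leq 0).\]
Let $\rho = \int \sqrt{dP_{H_0}\,dP_{H_1}} = \prod_{i=1}^{2m}\rho_i$ denote the Bhattacharyya coefficient and let $Q$ be the Hellinger midpoint, $dQ = \sqrt{dP_{H_0}\,dP_{H_1}}/\rho$. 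Since $dP_{H_0} = \rho\,e^{-L/2}\,dQ$ and $dP_{H_1} = \rho\,e^{L/2}\,dQ$, I obtain the key identity
\[\inf_\phi\bigl(P_{H_0}\phi + P_{H_1}(1-\phi)\bigr) = \rho\,\E_Q\bigl[e^{-|L|/2}\bigr].\]

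I would then estimate $\rho$ sharply. With $a_i = \theta_0\theta_i p$ and $b_i = \theta_0\theta_i q$, a Taylor expansion gives
\[\rho_i = \sqrt{a_i b_i} + \sqrt{(1-a_i)(1-b_i)} = 1 - \tfrac12\theta_0\theta_i(\sqrt p - \sqrt q)^2 + O\bigl(\theta_0^2\theta_i^2 p^2\bigr),\]
and invoking the normalization $\sum_{i=1}^{2m}\theta_i = 2m$ from \eqref{eq:normalize-test} yields
\[\log\rho = -\theta_0 m(\sqrt p - \sqrt q)^2 + O\Bigl(\theta_0^2 p^2\textstyle\sum_i \theta_i^2\Bigr).\]
Using $\sum_i\theta_i^2 \leq 2m\max_i\theta_i$ together with the hypotheses $p/q = O(1)$ (so $(\sqrt p - \sqrt q)^2 \asymp p$) and $p\max_{0\leq i\leq 2m}\theta_i^2 = o(1)$, the remainder is $o(\theta_0 m(\sqrt p-\sqrt q)^2)$, so $\log\rho = -(1+o(1))\theta_0 m(\sqrt p-\sqrt q)^2$.

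Next I would lower bound the Jensen factor. By Jensen and Cauchy--Schwarz,
\[\log \E_Q\bigl[e^{-|L|/2}\bigr] \geq -\tfrac12 \E_Q|L| \geq -\tfrac12\sqrt{\E_Q[L^2]}.\]
A direct computation of the two-point distribution of each $L_i$ under $Q_i$ yields $\mathrm{Var}_{Q_i}(L_i) = O(\theta_0\theta_i\sqrt{pq}\log^2(p/q)) = O(\theta_0\theta_i p)$; moreover the mean contributions to $\E_Q L$ from the first and second halves of components carry opposite signs because the $p\leftrightarrow q$ roles are swapped, and combined with $\sum_{i\leq m}\theta_i = \sum_{i>m}\theta_i = m$ this forces $(\E_Q L)^2$ to be of no greater order than $\mathrm{Var}_Q L$. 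Summing gives $\E_Q[L^2] = O(\theta_0 m p)$. When $\theta_0 m(\sqrt p-\sqrt q)^2\to\infty$, this is $o\bigl([\theta_0 m(\sqrt p - \sqrt q)^2]^2\bigr)$, so $\sqrt{\E_Q[L^2]} = o(\theta_0 m(\sqrt p-\sqrt q)^2)$; combining with the estimate of $\rho$ produces the first conclusion. When instead $\theta_0 m(\sqrt p - \sqrt q)^2 = O(1)$, both $\rho$ and $\E_Q[e^{-|L|/2}]$ remain bounded below by absolute constants, yielding the constant lower bound.

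The hardest part will be making the Taylor expansion of $\rho_i$ uniform in $i$ and verifying the mean-cancellation that keeps $(\E_Q L)^2$ from dominating $\E_Q[L^2]$. This cancellation is essentially a consequence of the normalization \eqref{eq:normalize-test}: without it, a constant-order contribution to $\E_Q L$ would appear and would degrade the sharp constant $1$ in the exponent. The Bhattacharyya bound $\rho^2/2$ by itself would give an exponent twice as large, so the Hellinger-midpoint tilting combined with the symmetry of the testing problem is what buys the sharp leading constant.
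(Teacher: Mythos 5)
Your identity $\inf_\phi\bigl(P_{H_0}\phi+P_{H_1}(1-\phi)\bigr)=\rho\,\E_Q\bigl[e^{-|L|/2}\bigr]$ is correct, and the estimate $\log\rho=-(1+o(1))\theta_0 m(\sqrt p-\sqrt q)^2$ is fine (the per-component relative error in $\log\rho_i$ is $O(\theta_0\theta_i p)=o(1)$ uniformly). The gap is in the treatment of the Jensen factor. First, the variance simplification loses precisely the factor you need. You correctly write $\mathrm{Var}_{Q_i}(L_i)=O\bigl(\theta_0\theta_i\sqrt{pq}\log^2(p/q)\bigr)$, but the further relaxation to $O(\theta_0\theta_i p)$ discards $\log^2(p/q)\asymp\bigl((p-q)/p\bigr)^2$, which is what makes $\sqrt{pq}\log^2(p/q)\asymp(\sqrt p-\sqrt q)^2$. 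The lemma's hypothesis $1<p/q=O(1)$ allows $p/q\to1$, and in that regime the claimed implication ``$\E_Q[L^2]=O(\theta_0 m p)$ is $o\bigl([\theta_0 m(\sqrt p-\sqrt q)^2]^2\bigr)$'' is false: it would require $p^2/(p-q)^2=o\bigl(\theta_0 m(p-q)^2/p\bigr)$, which is not a consequence of the hypotheses. You must retain the sharper bound $\mathrm{Var}_Q(L)\asymp\theta_0 m(\sqrt p-\sqrt q)^2$, after which $\sqrt{\mathrm{Var}_Q(L)}=o\bigl(\theta_0 m(\sqrt p-\sqrt q)^2\bigr)$ does follow from $\theta_0 m(\sqrt p-\sqrt q)^2\to\infty$.

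Second, the mean-cancellation claim is more delicate than you indicate. Writing $g(\theta)=\E_{Q_i}[L_i]$ for a first-half component with parameter $\theta$, the linear part cancels via $\sum_{i\le m}\theta_i=\sum_{i>m}\theta_i$, but the residual involves $g''(0)\bigl(\sum_{i\le m}\theta_i^2-\sum_{i>m}\theta_i^2\bigr)$, which the normalization does not kill. A crude estimate of $g''(0)$ gives $O(\theta_0^2 p(p-q))$, leading to $|\E_Q L|=O(\theta_0^2 p(p-q)m\max_i\theta_i)$; the ratio of this to $\theta_0 m(\sqrt p-\sqrt q)^2$ is of order $\bigl(p/(p-q)\bigr)\cdot p\theta_0\max_i\theta_i$, which need not vanish when $p/q\to1$. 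What rescues the argument is an algebraic cancellation: in the expansion of $\sqrt{(1-a)(1-b)}\log\tfrac{1-b}{1-a}$ the $u^2$ coefficient vanishes identically because $p^2-q^2=(p+q)(p-q)$ cancels against the cross term, so in fact $g''(0)\asymp\theta_0^2\,\tfrac{(p-q)^3}{p^2}\cdot\tfrac{(p-q)^2}{p}$, which is small enough. This cancellation is not something one would expect to be ``forced'' by the linear normalization; it requires carrying out the second-order expansion, and omitting it leaves the claim unsupported.

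The paper takes a different route that avoids both issues. Rather than fixing $t=1/2$, it chooses the tilting parameter $t^\ast$ at the stationary point of $\prod_i B_i(t)$, which makes $\E_{\mathbb Q}L=0$ exactly; the Chernoff coefficient at $t^\ast$ is then lower-bounded uniformly over $t$ by $\exp(-(1+o(1))\theta_0 m(\sqrt p-\sqrt q)^2)$, and a Lindeberg CLT under $\mathbb Q$ (with centered summands) replaces your Jensen and Cauchy--Schwarz step. That is the standard Cram\'er--Chernoff device, and it is what makes the sharp constant come out cleanly without having to control a residual bias. Your fixed-Hellinger-tilt approach is a genuinely different and in principle simpler argument, but to make it rigorous you would need both the sharp variance bound and the second-order cancellation that your sketch does not establish.
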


According to Neyman-Pearson lemma, the optimal testing procedure is the likelihood ratio test. However, such a test depends on the values of $\{\theta_i\}_{i=1}^{2m},p,q$, and is not appropriate in practice. We consider an alternative test
\begin{equation}
\phi=\indc{\sum_{i=1}^{m}X_i<\sum_{i=m+1}^{2m}X_i}.\label{eq:totti}
\end{equation}
This test is simple, but achieves the optimal error bound.
\begin{lemma}\label{lem:t-upper}
For the testing function defined above, we have
$$P_{H_0}\phi+P_{H_1}(1-\phi)\leq 2\exp\left(-\theta_0m(\sqrt{p}-\sqrt{q})^2\right).$$
\end{lemma}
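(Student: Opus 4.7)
The plan is to prove Lemma~\ref{lem:t-upper} by a direct Chernoff (exponential moment) argument, treating the Type I and Type II errors separately and then observing that they admit the same bound by symmetry.

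First, I would reduce to the Type I error. Since $\phi = \indc{\sum_{i=1}^m X_i < \sum_{i=m+1}^{2m} X_i}$, swapping the roles of indices $\{1,\dots,m\}$ and $\{m+1,\dots,2m\}$ in the product measure maps $H_0$ to $H_1$ and sends the event $\{\phi=1\}$ to the event $\{\phi=0\}$ (up to the measure-zero boundary, which can be handled by the complementary inequality $\sum_{i=1}^m X_i \le \sum_{i=m+1}^{2m} X_i$ in the definition, or absorbed into the factor of $2$). Hence $P_{H_1}(1-\phi) \le P_{H_0}\phi$ after a harmless adjustment, and it suffices to show $P_{H_0}\phi \le \exp(-\theta_0 m(\sqrt{p}-\sqrt{q})^2)$.

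Next, I would apply a Chernoff bound to $S := \sum_{i=m+1}^{2m} X_i - \sum_{i=1}^m X_i$. Under $H_0$ the $X_i$'s are independent Bernoullis, so for any $t>0$,
\begin{equation*}
P_{H_0}\{S > 0\} \le \Expect_{H_0}[e^{tS}] = \prod_{i=1}^m \Expect[e^{-tX_i}]\prod_{i=m+1}^{2m}\Expect[e^{tX_i}].
\end{equation*}
Using the elementary bound $1-r+re^s \le \exp(r(e^s-1))$ for $r\in[0,1]$ together with the normalization $\sum_{i=1}^m \theta_i = \sum_{i=m+1}^{2m}\theta_i = m$ from~\eqref{eq:normalize-test}, each product collapses and one obtains
\begin{equation*}
\Expect_{H_0}[e^{tS}] \le \exp\bigl(\theta_0 m\bigl[p(e^{-t}-1) + q(e^t-1)\bigr]\bigr).
\end{equation*}
Optimizing over $t>0$ — setting the derivative $-pe^{-t}+qe^t$ to zero yields the standard choice $e^t = \sqrt{p/q}$ — produces the exponent $\theta_0 m(2\sqrt{pq}-p-q) = -\theta_0 m(\sqrt{p}-\sqrt{q})^2$. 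Thus $P_{H_0}\phi \le \exp(-\theta_0 m(\sqrt{p}-\sqrt{q})^2)$.

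Finally, adding the two identical bounds gives the factor $2$ in the statement. There is no real obstacle here; the only mildly delicate point is the symmetry argument that equates the Type II error bound to the Type I error bound without having to repeat the Chernoff calculation, and the handling of the boundary event $\{\sum_{i=1}^m X_i = \sum_{i=m+1}^{2m} X_i\}$ (which can be assigned to either hypothesis without changing the exponent, since the same Chernoff bound controls $P_{H_1}\{S \le 0\}$ verbatim under the relabeling).
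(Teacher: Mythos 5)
Your proposal is correct and uses essentially the same argument as the paper: a Chernoff bound on the difference $\sum_{i>m}X_i-\sum_{i\leq m}X_i$, the bound $1-r+re^s\leq e^{r(e^s-1)}$, the normalization $\sum_{i\leq m}\theta_i=\sum_{i>m}\theta_i=m$ to collapse the products, and the optimal choice $e^t=\sqrt{p/q}$, with the Type II error handled identically. One small caveat: the index-swapping map does not literally carry $H_0$ to $H_1$ in general since the $\theta_i$'s are attached to fixed indices, but as you note this is harmless because the direct Chernoff computation for $P_{H_1}(1-\phi)$ produces the same exponent (it depends on the $\theta_i$'s only through the two block sums, both equal to $m$).
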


Combining Lemma \ref{lem:t-lower} and Lemma \ref{lem:t-upper}, we find that the minimax testing error for the problem (\ref{eq:hypo}) is $e^{-(1+o(1))\theta_0m(\sqrt{p}-\sqrt{q})^2}$. This explains why the minimax rate for community detection in DCBM takes the form of $e^{-(1+o(1))I}$ in Corollary \ref{cor:minimax}. Moreover, the simple testing function (\ref{eq:totti}) serves as a critical component in Algorithm \ref{alg:refine}. The fact that (\ref{eq:totti}) can achieve the optimal testing error exponent in Lemma \ref{lem:t-upper} explains why our algorithm for community detection can achieve the minimax rate when the community sizes are equal (Theorem \ref{thm:eq-size-I}).

In order for Lemma \ref{lem:t-lower} to be applied to lower bounding the performance of community detection in DCBM, we need a version of Lemma \ref{lem:t-lower} that can handle approximately equal sizes. To be specific, 
suppose $X = (X_1,\dots, X_{m},\\ X_{m+1},\dots, X_{m+m_1})$ have independent Bernoulli entries. 
Given $1 \geq p>q \geq 0$ and $\theta_0,\theta_1,\dots, \theta_{m+m_1} > 0$ such that
\begin{equation}
	\label{eq:normalize-test-approx}
\frac{1}{m}\sum_{i=1}^{m}\theta_i, \frac{1}{m_1}\sum_{i=m+1}^{m+m_1}\theta_i\in [1-\delta,1+\delta].
\end{equation}
When $m$ and $m_1$ are approximately equal,
we are interested in understanding the minimum possible Type I+II error of testing
\begin{equation}
\label{eq:hypo-approx}
\begin{aligned}
& H_0: X\sim \bigotimes_{i=1}^{m}\text{Bern}\left(\theta_0\theta_ip\right) \otimes \bigotimes_{i=m+1}^{m+m_1}\text{Bern}\left(\theta_0\theta_iq\right) \\
& \quad \quad \quad \mbox{vs.}\quad
H_1: X\sim \bigotimes_{i=1}^{m}\text{Bern}\left(\theta_0\theta_i q\right) \otimes \bigotimes_{i=m+1}^{m+m_1}\text{Bern}\left(\theta_0\theta_i p\right).	
\end{aligned}
\end{equation}
The setting of Lemma \ref{lem:t-lower} is a special case where $m=m_1$ and $\delta=0$.

\begin{lemma}\label{lem:t-lower-aprox}
Suppose that as $m\to\infty$, $1<p/q = O(1)$, $p \max_{0\leq i\leq 2m}\theta_i^2 = o(1)$, $\delta=o(1)$ and $\left|\frac{m}{m_1}-1\right|=o(1)$. 
If $\theta_0 m (\sqrt{p}-\sqrt{q})^2 \to \infty$, 
\begin{equation*}
\inf_{\phi}\left(P_{H_0}\phi+P_{H_1}(1-\phi)\right)\geq 
\exp\left(-(1+o(1)) \theta_0 m (\sqrt{p}-\sqrt{q})^2\right).	
\end{equation*}
Otherwise, there exists a constant $c\in (0,1)$ such that
\begin{equation*}
\inf_{\phi}\left(P_{H_0}\phi+P_{H_1}(1-\phi)\right)\geq c.
\end{equation*}
\end{lemma}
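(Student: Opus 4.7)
The plan is to revisit the sharp Cram\'er--Chernoff argument behind \prettyref{lem:t-lower} and verify that it still produces the same asymptotic exponent under the two slack conditions of \prettyref{lem:t-lower-aprox}: $m_1/m=1+o(1)$ and the approximate normalization \eqref{eq:normalize-test-approx} with $\delta=o(1)$. Write $L=\sum_{i=1}^{m+m_1}\log\frac{dH_1}{dH_0}(X_i)$ for the log-likelihood ratio and $\psi(t)=\log\mathbb{E}_{H_0}[e^{tL}]$ for its log moment generating function. Since the coordinates are independent Bernoullis with parameters $(a_i,b_i)$ under $(H_0,H_1)$, we have $\psi(t)=\sum_{i=1}^{m+m_1}\log\bigl(a_i^{1-t}b_i^t+(1-a_i)^{1-t}(1-b_i)^t\bigr)$.

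The first step is to evaluate $\psi(1/2)$. The hypothesis $p\|\theta\|_\infty^2=o(1)$ allows a uniform Taylor expansion giving $\log\bigl(\sqrt{a_ib_i}+\sqrt{(1-a_i)(1-b_i)}\bigr)=-\tfrac{\theta_0\theta_i}{2}(\sqrt{p}-\sqrt{q})^2(1+o(1))$ term by term. Summing and inserting \eqref{eq:normalize-test-approx} together with $m_1/m=1+o(1)$ produces $\sum_{i=1}^{m+m_1}\theta_i=2m(1+o(1))$, whence
\[
\psi(1/2)=-\theta_0\,m\,(\sqrt{p}-\sqrt{q})^2\,(1+o(1)).
\]
This is the only step at which the two slack conditions enter, and the errors they introduce are absorbed into the $(1+o(1))$ prefactor.

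The second step is a change-of-measure lower bound using the Hellinger tilt $Q$ with $dQ=e^{-\psi(1/2)}\sqrt{dH_0\,dH_1}$. Under $Q$ the coordinates remain independent tilted Bernoullis with parameters of order $\theta_0\theta_i\sqrt{pq}$, so a direct calculation using $1<p/q=O(1)$ to keep $\log(p/q)$ bounded gives $\mathrm{Var}_Q(L)=\psi''(1/2)\asymp|\psi(1/2)|$, and $\mathbb{E}_Q[L]=\psi'(1/2)$ proportional to $\sum_{i>m}\theta_i-\sum_{i\leq m}\theta_i$, which the approximate normalization and $m_1/m=1+o(1)$ force to be $o(m)$; hence $|\mathbb{E}_Q[L]|=o(|\psi(1/2)|)$. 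When $\theta_0 m(\sqrt{p}-\sqrt{q})^2\to\infty$, both $|\mathbb{E}_Q[L]|$ and $\sqrt{\mathrm{Var}_Q(L)}=O(\sqrt{|\psi(1/2)|})$ are therefore $o(|\psi(1/2)|)$, and Chebyshev's inequality delivers the anti-concentration $Q\bigl(L\in[\mathbb{E}_Q[L]-2\sqrt{\mathrm{Var}_Q(L)},\,\mathbb{E}_Q[L]+2\sqrt{\mathrm{Var}_Q(L)}]\bigr)\geq 3/4$.

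Writing $P_{H_0}(L\geq 0)\geq e^{\psi(1/2)}\int_{\{L\geq 0\}}e^{-L/2}\,dQ$, restricting to the (suitably shifted) anti-concentration window, and noting that on this window the factor $e^{-L/2}$ contributes only $e^{o(|\psi(1/2)|)}=(1+o(1))$ on the log scale, yields $P_{H_0}(L\geq 0)\geq\exp\bigl((1+o(1))\psi(1/2)\bigr)$; the symmetric computation for $P_{H_1}(L<0)$ gives the same bound. Adding the two produces $\inf_\phi(P_{H_0}\phi+P_{H_1}(1-\phi))\geq\exp\bigl(-(1+o(1))\theta_0 m(\sqrt{p}-\sqrt{q})^2\bigr)$, as claimed. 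When $\theta_0 m(\sqrt{p}-\sqrt{q})^2$ does not diverge, $|\psi(1/2)|$ stays bounded and the same estimate returns a universal $c\in(0,1)$. The principal obstacle is ensuring that the tilted drift $\psi'(1/2)$ and the standard deviation $\sqrt{\psi''(1/2)}$ are both on the lower-order scale $o(|\psi(1/2)|)$, so that the threshold shift needed to align the anti-concentration window with $\{L\geq 0\}$ does not leak into the exponent; this is precisely where the hypotheses $\delta=o(1)$, $|m_1/m-1|=o(1)$, and $p\|\theta\|_\infty^2=o(1)$ jointly do their work.
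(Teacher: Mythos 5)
Your route differs from the paper's in one essential design choice, and that difference creates a subtle gap. The paper proves \prettyref{lem:t-lower-aprox} by first reducing to a $2m$-coordinate problem and then re-running the Cram\'er--Chernoff argument of \prettyref{lem:t-lower}, in which the tilt parameter $t$ is taken to be the stationary point of $t\mapsto\prod_i B_i$; by construction this choice makes the tilted drift $\sum_i\mathbb{E}_{\mathbb{Q}}W_i$ vanish identically, so the Lindeberg CLT step is completely insensitive to the size of the drift. The only adaptation needed for the approximate setting is a small correction to the bound on $\prod_i B_i$ (the derivation around \eqref{eq:jingran}), which is controlled via \prettyref{lem:J_expand}. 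You instead fix the Hellinger tilt $t=1/2$ and try to absorb the resulting nonzero drift $\psi'(1/2)$ into the $(1+o(1))$. That is a legitimate alternative and avoids the reduction step, but it shifts the whole burden onto a drift estimate that your sketch does not actually supply.

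Concretely, you write that $\psi'(1/2)$ is proportional to $\sum_{i>m}\theta_i-\sum_{i\leq m}\theta_i=o(m)$ and conclude $|\mathbb{E}_Q[L]|=o(|\psi(1/2)|)$, but that inference depends on the size of the proportionality constant, and the obvious bound fails. To leading order the per-coordinate tilted drift is $\theta_0\theta_i\bigl(\sqrt{pq}\log(q/p)+(p-q)\bigr)$, and each of the two terms in parentheses is of order $p-q$, which is much larger than $(\sqrt p-\sqrt q)^2\asymp|\psi(1/2)|/(\theta_0 m)$ when $p/q\to 1$; so the naive estimate $|\psi'(1/2)|\lesssim\theta_0(p-q)\cdot o(m)$ is not $o(|\psi(1/2)|)$. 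What rescues the argument is a cancellation: with $u=\sqrt{p/q}$,
\[
\sqrt{pq}\log\frac{q}{p}+(p-q)=q\bigl(u^2-1-2u\log u\bigr)=\frac{u-1}{3}\,(\sqrt p-\sqrt q)^2\,\bigl(1+O(u-1)\bigr),
\]
which is $O\bigl((\sqrt p-\sqrt q)^2\bigr)$ precisely because $p/q=O(1)$. You also need the higher-order (in $\theta_0\theta_i$) corrections to the per-coordinate drift to be $o(|\psi(1/2)|)$, which uses $p\max_i\theta_i^2=o(1)$ and involves further cancellations of the same flavor. Without spelling this out, the step from ``imbalance $=o(m)$'' to ``drift $=o(|\psi(1/2)|)$'' is unjustified; the paper's drift-zeroing choice of $t$ sidesteps this issue entirely. (A smaller point: when $\mathbb{E}_Q[L]<0$ the Chebyshev window around it may not meet $\{L\geq 0\}$; one should note that one of $Q(L\geq 0)$ and $Q(L<0)$ is at least $1/2$ and bound $P_{H_0}(L\geq 0)$ or $P_{H_1}(L<0)$ accordingly, which your phrase ``suitably shifted'' gestures at but does not pin down.)
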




\subsection{Proof of \prettyref{thm:upper-nbar}}

	\label{sec:messi}
Throughout the proof, we let $z$ denote the truth, $\hat{z}$ the estimator defined in \eqref{eq:mle} and $\tz$ a generic assignment vector.
In addition, we let $L$ denote the objective function in \eqref{eq:mle}.
In what follows, we focus on proving the upper bounds for $k\geq 3$ while the case of $k=2$ is deferred to Appendix \ref{sec:minimax-2}.

\paragraph{Outline and additional notation}
We have the following basic equality
\begin{align}
	\label{eq:basic-mle}
\mathbb{E}n\ell(\hat{z},z)=
\sum_{m=1}^n m\mathbb{P}(n\ell(\hat{z},z)=m).
\end{align}
Thus, to prove the desired upper bounds, we are to work out appropriate bounds for the individual probabilities $\mathbb{P}(n\ell(\hat{z},z)=m)$.
To this end, for any given $m$, our basic idea is to first bound $\Prob(L(\tilde{z}) > L(z))$ for any $\tilde{z}$ such that $n\ell(\hat{z},z) = m$ and then apply the union bound.
To carry out these calculations in details, we divide the entire proof into three major steps:
\begin{itemize}
\item In the first step, we derive a generic upper bound expression for the quantity  $\Prob(L(\tilde{z}) > L(z))$ for any deterministic $\tilde{z}$.

\item In the second step, we further materialize the upper bound expression according to different values of $m$ where $m = n\ell(\tilde{z},z)$.
In particular, we shall use different arguments in three different regimes of $m$ values. Together with the union bound, we shall obtain bounds for all probabilities $\mathbb{P}(n\ell(\hat{z},z)=m)$. 

\item In the last step, we supply the bounds obtained in the second step to \eqref{eq:basic-mle} to establish the theorem. 
Indeed, the bounds we derive in the second step decay geometrically once $m$ is larger than some critical value which depends on the rate of the error bounds.
Thus, we divide the final arguments here according to three different regimes of error rates.
\end{itemize}

After a brief introduction to some additional notation, we carry out these three steps in order.
We denote $n_{\min}=\min_{u\in [k]}|\{i:z(i)=u\}|$, $n_{\max}=\max_{u\in [k]}|\{i:z(i)=u\}|$ and $\theta_{\min}=\min_{i\in[n]}\theta_i$. Note that $\nmin\geq n/(\beta k)$, $\nmax\leq \beta n/k$ and $\theta_{\min}= \Omega(1)$. 
For any $t\in(0,1)$, we define
\begin{equation}\label{eq:Rt}
R_{t}=\frac{1}{n}\sum_{i=1}^n\exp\left(-(1-t)\theta_i\nmin(\sqrt{p}-\sqrt{q})^2\right).	
\end{equation}
In order to show $\mathbb{E}\ell(\hat{z},\ztrue)\leq \exp(-(1-o(1))I)$, it is sufficient to prove $\mathbb{E}\ell(\hat{z},\ztrue)\leq R_t$ for some $t=o(1)$, since 
$$R_{t}\leq \frac{1}{n}\sum_{i=1}^n\exp\left(-(1-t)\theta_in/(\beta k)(\sqrt{p}-\sqrt{q})^2\right)\leq \exp(-(1-t)I),$$ where the second inequality is by Jensen inequality.

\paragraph{Step 1: bounding $\mathbb{P}\left(L(\tz)>L(\ztrue)\right)$}
For any deterministic $\tz$, we have
\begin{align*}
\mathbb{P}\left(L(\tz)>L(\ztrue)\right)&=\p\Bigg(\sum_{\substack{i<j,\ztrue(i)=\ztrue(j)\\ \tz(i)\neq \tz(j)}}\Big(A_{ij}\log\frac{q(1-\theta_i\theta_jp)}{p(1-\theta_i\theta_jq)}+\log\frac{1-\theta_i\theta_jq}{1-\theta_i\theta_jp}\Big)\\
&~~~~~~~~+\sum_{\substack{i<j,\ztrue(i)\neq \ztrue(j)\\ \tz(i)= \tz(j)}}\Big(A_{ij}\log\frac{p(1-\theta_i\theta_jq)}{q(1-\theta_i\theta_jp)}+\log\frac{1-\theta_i\theta_jp}{1-\theta_i\theta_jq}\Big)>0\Bigg).
\end{align*}
When $\ztrue(i)=\ztrue(j)$, we have $\p(A_{ij}=1)=\theta_i\theta_jp'$ for some $p'\geq p$, and so
\begin{align*}
&
\E \exp\bigg(\frac{1}{2}\Big(A_{ij}\log\frac{q(1-\theta_i\theta_jp)}{p(1-\theta_i\theta_jq)}+\log\frac{1-\theta_i\theta_jq}{1-\theta_i\theta_jp}\Big)\bigg) \\
&=\theta_i\theta_j\sqrt{\frac{q}{p}}p'+\sqrt{\frac{1-\theta_i\theta_jq}{1-\theta_i\theta_jp}}(1-\theta_i\theta_jp')\\
&=\theta_i\theta_j\sqrt{qp}+\sqrt{(1-\theta_i\theta_jq)(1-\theta_i\theta_jp)}+\theta_i\theta_j(p'-p)\left(\sqrt{\frac{q}{p}}-\sqrt{\frac{1-\theta_i\theta_j q}{1-\theta_i\theta_jp}}\right)\\
&\leq  \exp\big(\log\Big(\sqrt{pq}\theta_i\theta_j + \sqrt{1-\theta_i\theta_jq}\sqrt{1-\theta_i\theta_jp}\big)\Big) 
\leq 
\exp(-\frac{1}{2}\theta_i\theta_j(\sqrt{q}-\sqrt{p})^2).
\end{align*}
Similarly, when $\ztrue(i)\neq \ztrue(j)$ we also have
\begin{align*}
&
\E\exp\left(\frac{1}{2}\Big(A_{ij}\log\frac{p(1-\theta_i\theta_jq)}{q(1-\theta_i\theta_jp)}+\log\frac{1-\theta_i\theta_jp}{1-\theta_i\theta_jq}\Big)\right) \\
&\leq  \exp\big(\log\Big(\sqrt{pq}\theta_i\theta_j + \sqrt{1-\theta_i\theta_jq}\sqrt{1-\theta_i\theta_jp}\big)\Big) 
\leq \exp(-\frac{1}{2}\theta_i\theta_j(\sqrt{q}-\sqrt{p})^2).
\end{align*}
For any assignment vector $\tz$,
Define membership matrix $\tilde{Y}\in\sth{0,1}^{n\times n}$ with $\tilde{Y}_{ij}= \indc{\tz(i)=\tz(j)}$ for all $i\neq j$ and zero otherwise. 
Let $Y$ be the membership matrix associated with the truth $z$.  
Note that the membership matrix is invariant under permutation of the community labels.  By applying the Chernoff bound with $t=\frac{1}{2}$ we have
\begin{equation}
\label{eq:mle-1}
\begin{aligned}
\mathbb{P}\left(L(\tz)>L(\ztrue)\right)
& \leq \prod_{\substack{i<j\\ \tilde{Y}_{ij}\neq Y_{ij}}}\exp\left(-\frac{1}{2}\theta_i\theta_j(\sqrt{p}-\sqrt{q})^2\right)\\
& = \prod_{\substack{i\neq j\\ \tilde{Y}_{ij}\neq Y_{ij}}}\exp\left(-\frac{1}{4}\theta_i\theta_j(\sqrt{p}-\sqrt{q})^2\right).
\end{aligned}
\end{equation}

\paragraph{Step 2: bounding $\Prob(n\ell(\hat{z},z) = m)$}
To obtain the desired bounds on these probabilities, we introduce a way to partition each community according to the values of the degree-correction parameters.
Given the truth $z$ and any deterministic assignment vector $\tz$,
let $\mathcal{C}_u=\{i\in[n]:\ztrue(i)=u\}$, $\Gamma_u=\{i\in[n]:\ztrue(i)=u,\tz(i)\neq u\}$ and $\Gamma = \cup_{u\in [k]} \Gamma_u$. 
Note that $\Gamma_u$ and $\Gamma$ depend on $\tilde{z}$.

Let $\folds \geq 2$ be a large enough constant integer to be determined later.
For each $\mathcal{C}_u$ we decompose it as 
$\mathcal{C}_u=\mathcal{C}_u^+\cup \mathcal{C}_u^-$ such that 
\begin{equation}
	\label{eq:Cu-decomp}
\mathcal{C}_u^+\cap \mathcal{C}_u^-=\emptyset,\quad
|\mathcal{C}_u^-| = \ceil{\frac{|\mathcal{C}_u|}{\folds}},\quad	
\min_{i\in \mathcal{C}_u^+}\theta_i\geq \max_{i\in \mathcal{C}_u^-}\theta_i.
\end{equation}
Due to the approximate normalization of degree-correction parameters, 
for sufficiently large values of $n$,
\begin{equation}
	\label{eq:Cu-}
\max_{i\in \mathcal{C}_u^-}\theta_i\leq 
{3}/{2}.	
\end{equation}
Since $|\mathcal{C}_u^+| \leq (\folds - 1) |\mathcal{C}_u^-|$, we can define a mapping $\tau_u : \mathcal{C}_u\rightarrow \mathcal{C}_u^-$ such that its restriction on $\mathcal{C}_u^-$ is identity.
Moreover, we could require that for any $i\in \mathcal{C}_u^-$, $|\tau_u^{-1}(i)|\leq \folds$.
Let $\tau$ be the mapping from $[n]$ to $\cup_{u=1}^k \mathcal{C}_u^-$ such that the restriction of $\tau$ on ${\mathcal{C}_u}$ is $\tau_u$.
{The main reason for introducing $\tau$ is to deal with the range of values the degree-correction parameters can take. 
The right side of \eqref{eq:mle-1} shows that the desired bounds depend crucially on quantities of the form $\sum_{i\in S}\theta_i$ for some set $S$.
For any set $S$, the sum $\sum_{i\in S}\theta_i$ is not necessarily upper bounded by a constant multiple of the size of the set $|S|$.
However, by \eqref{eq:Cu-}, we can always upper bound $\sum_{i\in S}\theta_{\tau(i)}$ by a constant multiple of $|S|$. 
This gives us a way to relate the probability bounds and the number of misclassified nodes.
Such a point can be seen more clearly as we go to explicit calculation below.
}

Let 
\begin{equation}
	\label{eq:m-prime}
m' = \eta n / k	
\end{equation}
for some $\eta = o(1)$ with $\eta^{-1} = o(I)$ and $k\leq n^\eta$. 
We now derive bounds for $\mathbb{P}(n\ell(\hat{z},\ztrue)=m)$ for $m\in [1,M]$, $(M,m']$ and $(m',n]$ separately.

\smallskip
\subparagraph{Case 1: $1\leq m \leq \folds$}
In this case, we have
\begin{eqnarray*}
\mathbb{P}(n\ell(\hat{z},\ztrue)=m) 
&\leq& \sum_{\tilde{z}:|\Gamma|=m}\exp\Big(-\frac{1}{2}\sum_{i\in \Gamma}\theta_i\Big((1-\delta)2\nmin-\sum_{i\in \Gamma}\theta_i\Big)(\sqrt{p}-\sqrt{q})^2\Big) \\
&\leq& \sum_{\tilde{z}:|\Gamma|=m}\exp\Big(-\frac{1}{2}\sum_{i\in \Gamma}\theta_{\tau(i)}\Big((1-\delta)2\nmin-\sum_{i\in \Gamma}\theta_{\tau(i)}\Big)(\sqrt{p}-\sqrt{q})^2\Big) \\
&\leq& \sum_{\tilde{z}:|\Gamma|=m}\prod_{i\in \Gamma}\exp\Big(-\frac{1}{2}\theta_{\tau(i)}\left((1-\delta)2\nmin-2\folds\right)(\sqrt{p}-\sqrt{q})^2\Big).
\end{eqnarray*}
Here, the first inequality comes from direct application of \eqref{eq:mle-1} and the union bound.
Since $\|\theta\|_\infty = o(n/k) = o(\nmin)$ 
and $M$ is a constant, we have $\sum_{i\in \Gamma}\theta_i = o(\nmin)$.
This, together with the monotonicity of the function $x(1-x)$ when $x$ is in the right neighborhood of zero, implies the second inequality.
The third inequality is due to \eqref{eq:Cu-}.
Since $\folds$ is a constant and $M/\nmin$ can be upper bounded by $\eta$ for large values of $n$, we further have
\begin{eqnarray*}
\mathbb{P}(n\ell(\hat{z},\ztrue)=m) 
&\leq& \sum_{\tilde{z}:|\Gamma|=m}\prod_{i\in \Gamma}\exp\Big(-\theta_{\tau(i)}(1-\delta-2\eta)2\nmin(\sqrt{p}-\sqrt{q})^2\Big) 
\\
&\leq& k^m\bigg(\sum_{i=1}^n\exp\Big(-\theta_{\tau(i)}(1-\delta-2\eta)\nmin(\sqrt{p}-\sqrt{q})^2\Big)\bigg)^m \\
&\leq& k^m\bigg(M\sum_{i=1}^n\exp\Big(-\theta_{i}(1-\delta-2\eta)\nmin(\sqrt{p}-\sqrt{q})^2\Big)\bigg)^m \\
&=& (knMR_{\delta+2\eta})^m.
\end{eqnarray*}
{Here and after, the notation $\sum_{\tilde{z}:|\Gamma|=m}$ means summing over all deterministic assignment vectors $\tilde{z}$ such that $|\Gamma| = n\ell(\tilde{z},z) = m$.}
The last inequality holds since for any $i\in \mathcal{C}_u^-$, $|\tau_u^{-1}(i)|\leq \folds$.

\smallskip
\subparagraph{Case 2: $M< m\leq m'$} 
{In this case, we cannot directly apply the argument in case 1 since we can no longer guarantee that $\sum_{i\in \Gamma}\theta_i = o(n_{\min})$ and so the second inequality of the last display no longer holds.}
To proceed,
we can further bound the rightmost side of \eqref{eq:mle-1} by $B_1\times B_2$, where
\begin{align*}
B_1 & = \prod_{\substack{(i,j):\ztrue(i)=\ztrue(j)\\ \tz(i)\neq \tz(j)}}\exp\left(-\frac{1}{4}\theta_i\theta_j(\sqrt{p}-\sqrt{q})^2\right),\\
B_2 & = \prod_{\substack{(i,j):\ztrue(i)\neq \ztrue(j)\\ \tz(i)= \tz(j)}}\exp\left(-\frac{1}{4}\theta_i\theta_j(\sqrt{p}-\sqrt{q})^2\right).
\end{align*}

In what follows, 
we focus on upper bounding $B_1$ and the same upper bound holds for $B_2$ by essentially repeating the arguments.
For $B_1$, we have
\begin{align}
B_1&=\prod_{u=1}^k\prod_{u'=1}^k\prod_{\substack{\{i:\ztrue(i)=u,\\\tz(i)=u' \}}}\prod_{\substack{\{j:\ztrue(j)=u,\\\tz(j)\neq u'\}}}\exp\bigg(-\frac{1}{4}\theta_i\theta_j(\sqrt{p}-\sqrt{q})^2\bigg)\label{eqn:simplify1}\\
&= \prod_{u=1}^k\prod_{u'\neq u}\exp\bigg(-\frac{1}{4}\sum_{\substack{\{i:\ztrue(i)=u,\\\tz(i)=u' \}}}\theta_i\sum_{\substack{\{j:\ztrue(j)=u,\\\tz(j)\neq u'\}}}\theta_j(\sqrt{p}-\sqrt{q})^2\bigg)\nonumber\\
&\;\;\;\;\times\prod_{u=1}^k\prod_{u'=u}\exp\bigg(-\frac{1}{4}\sum_{\substack{\{i:\ztrue(i)=u,\\\tz(i)=u' \}}}\theta_i\sum_{\substack{\{j:\ztrue(j)=u,\\\tz(j)\neq u'\}}}\theta_j(\sqrt{p}-\sqrt{q})^2\bigg)\nonumber
\\
&\leq \prod_{u=1}^k\prod_{u'\neq u}\exp\bigg(-\frac{1}{4}\sum_{\substack{\{i:\ztrue(i)=u,\\\tz(i)=u' \}}}\theta_i\sum_{\substack{\{j:\ztrue(j)=u,\\\tz(j)= u\}}}\theta_j(\sqrt{p}-\sqrt{q})^2\bigg)\nonumber\\
&\;\;\;\;\times\prod_{u=1}^k\prod_{u'\neq u}\exp\bigg(-\frac{1}{4}\sum_{\substack{\{i:\ztrue(i)=u,\\\tz(i)=u \}}}\theta_i\sum_{\substack{\{j:\ztrue(j)=u,\\\tz(j)= u'\}}}\theta_j(\sqrt{p}-\sqrt{q})^2\bigg). \label{eqn:simplify2}
\end{align}

\begin{figure}[!tb]
\includegraphics[trim=0mm 20mm 20mm 0mm, clip, width=0.35\textwidth]{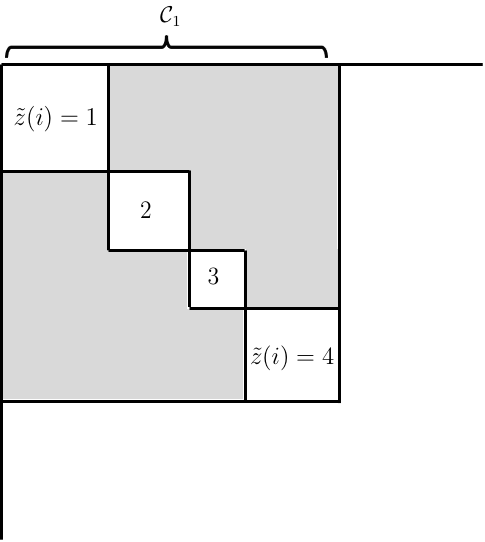}
\quad\quad\quad\quad\quad
\includegraphics[trim=0mm 20mm 20mm 0mm, clip, width=0.35\textwidth]{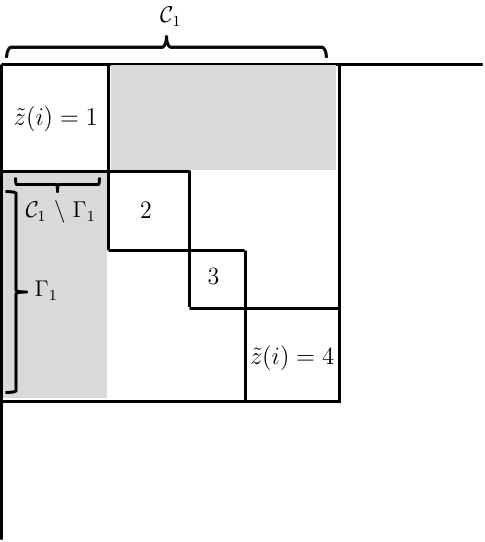}
\caption{Illustration of reduction to (\ref{eqn:simplify2}) when $k=4$. 
Only nodes from the first community are shown,
which are rearranged according to $\tilde{z}$.  
In the left panel the gray regions correspond to terms in (\ref{eqn:simplify1}). 
In the right panel the gray regions correspond to terms in (\ref{eqn:simplify2}). 
Note that the area of the gray regions in the left panel is larger than the one in the right panel. \label{fig:illustration}}
\end{figure}

\prettyref{fig:illustration} illustrates why the inequality in (\ref{eqn:simplify2}) holds. 
Furthermore, we notice that
\begin{align}
(\ref{eqn:simplify2})&= \prod_{u=1}^k\prod_{u'\neq u}\exp\bigg(-\frac{1}{2}\sum_{\substack{\{i:\ztrue(i)=u,\\\tz(i)=u' \}}}\theta_i\sum_{\substack{\{j:\ztrue(j)=u,\\\tz(j)= u\}}}\theta_j(\sqrt{p}-\sqrt{q})^2\bigg)\nonumber\\
&=\prod_{u=1}^k\exp\bigg(-\frac{1}{2}\sum_{i\in \Gamma_u}\theta_i\sum_{j\in \mathcal{C}_u\setminus \Gamma_u}\theta_j(\sqrt{p}-\sqrt{q})^2\bigg)\nonumber\\
&=\prod_{u=1}^k\exp\bigg(-\frac{1}{2}\sum_{i\in \Gamma_u}\theta_i\bigg(\sum_{i\in \mathcal{C}_u}\theta_i-\sum_{i\in \Gamma_u}\theta_i\bigg)(\sqrt{p}-\sqrt{q})^2\bigg).\label{eq:simplify3}
\end{align}
To further bound the right side of \eqref{eq:simplify3}, recall that $\theta_{\min} = \min_i \theta_i = \Omega(1)$.
Then 
\begin{align*}
\sum_{i\in \mathcal{C}_u}\theta_i-\sum_{i\in \Gamma_{u}}\theta_i = \sum_{\mathcal{C}_u\backslash \Gamma_u} \theta_i \geq  (|\mathcal{C}_u|-|\Gamma_u|)\theta_{\min}\geq  
\frac{|\mathcal{C}_u|\theta_{\min}}{2 }.
\end{align*}
{Here the last inequality holds since $|\Gamma_u|\leq |\Gamma| \leq m' = o(\nmin) \leq \frac{1}{2}|\mathcal{C}_u|$.}
Together with the property of the function $x(1-x), x\in[0,1]$, 
when $\folds \geq \frac{5}{\theta_{\min}}$,
we have
\begin{equation}
	\label{eq:mle-case2}
\begin{aligned}
\sum_{i\in \Gamma_{u}}\theta_i\bigg(\sum_{i\in \mathcal{C}_u}\theta_i-\sum_{i\in \Gamma_{u}}\theta_i\bigg)
& \geq \sum_{i\in \tau_u(\Gamma_{u})}\theta_i \bigg(\sum_{i\in \mathcal{C}_u}\theta_i-\sum_{i\in \tau_u(\Gamma_{u})}\theta_i\bigg)\\
& \geq \sum_{i\in \tau_u(\Gamma_{u})}\theta_i \bigg(\sum_{i\in \mathcal{C}_u}\theta_i-2\eta\nmin\bigg).	
\end{aligned}
\end{equation}
Here, the first inequality holds since $\sum_{i\in \tau_u(\Gamma_u)}	\theta_i \leq |\tau_u(\Gamma_u)| \max_{i\in \mathcal{C}_u^-}\theta_i \leq 2(M^{-1}|\mathcal{C}_u|+1)\leq \frac{1}{2}|\mathcal{C}_u|\theta_{\min}$.
The second inequality is due to \eqref{eq:Cu-}, $\nmin\geq \frac{n}{\beta k}-1$ and the fact $|\tau_u(\Gamma_u)|\leq |\Gamma_u| \leq \eta n/k$ in the current case.
Thus
\begin{align*}
B_1&\leq \prod_{u=1}^k\exp\left(-\frac{1}{2}\sum_{i\in \Gamma_{u}}\theta_i\left(\sum_{i\in \mathcal{C}_u}\theta_i-\sum_{i\in \Gamma_{u}}\theta_i\right)(\sqrt{p}-\sqrt{q})^2\right)\\
&\leq \prod_{u=1}^k\exp\left(-\frac{1}{2}
\sum_{i\in \tau_u(\Gamma_{u})}\theta_i\left(\sum_{i\in \mathcal{C}_u}\theta_i-2\eta\nmin\right)
(\sqrt{p}-\sqrt{q})^2\right)\\
&\leq \prod_{i\in\tau(\Gamma)}\exp\left(-\frac{1}{2}\theta_i\left(\sum_{j\in \mathcal{C}_{z(i)}}\theta_j-2\eta\nmin\right)(\sqrt{p}-\sqrt{q})^2\right)\\
&\leq \prod_{i\in\tau(\Gamma)}\exp\left(-\frac{1}{2}\theta_i(1-\delta-2\eta)
\nmin(\sqrt{p}-\sqrt{q})^2\right).
\end{align*}
Here, the last inequality is due to the approximate normalization constraint on the $\theta_i$'s.
Thus with the same bound on $B_2$ we obtain that for any $\tz$ such that $M < n\ell(\tz, \ztrue) \leq m'$,
\begin{align*}
\mathbb{P}\left(L(\tz)>L(\ztrue)\right)&\leq  \prod_{i\in\tau(\Gamma)}\exp\left(-\theta_i\left(1-\delta-2\eta \right)\nmin(\sqrt{p}-\sqrt{q})^2\right).
\end{align*}
Since for any $i\in \mathcal{C}_u^-$, $|\tau_u^{-1}(i)|\leq \folds$, we obtain that $|\tau(\Gamma)|\geq m/\folds$, and so we have 
\begin{align*}
& \mathbb{P}(n\ell(\hat{z},\ztrue)=m) \\
&\leq \sum_{\tilde{z}:|\Gamma|=m}k^m\prod_{i\in \tau(\Gamma)}\exp\left(-\theta_{i}(1-\delta-2\eta)\nmin(\sqrt{p}-\sqrt{q})^2\right) \\
&\leq {m\folds \choose m}{m\choose m/\folds}k^m\frac{1}{(m/M)!}\left(\sum_{i=1}^n
\exp\left(-\theta_{i}(1-\delta-2\eta)\nmin(\sqrt{p}-\sqrt{q})^2\right)\right)^{m/\folds} \\
&\leq (ek\folds)^{m}\left(\frac{e^2MnR_{\delta+2\eta}}{m/\folds}\right)^{m/\folds}.
\end{align*}
Here, the second inequality is based on counting and the details are as follows. Note that each term in $\prod_{i\in\tau(\Gamma)}$ is a product of at least $m/\folds$ terms.  First, there are at most ${m\choose m/\folds}$ of sets $\tau(\Gamma)$ that map to the same $m/\folds$-product. Then, there are at most ${m\folds\choose m}$ of sets $\Gamma$ that map to the same $\tau(\Gamma)$ (recall that for any $i\in \mathcal{C}_u^-$, $|\tau_u^{-1}(i)|<\folds$). For each $m/\folds$-product, it appear at most $(m/\folds)!$ times from the expansion of $nR_{\delta+2\eta}$, and that explains the existence of $1/(m/\folds)!$.
The last inequality holds since $\binom{n}{m}\leq \big(\frac{en}{m}\big)^m$ and $n!\geq \sqrt{2\pi}n^{n+1/2}e^{-n}$. 

\smallskip
\subparagraph{Case 3: $m> m'$} 
{In this case, we cannot use the same argument as in case 2 since \eqref{eq:mle-case2} does not necessarily hold.}
To proceed,
let $\Gamma_{u,u'}=\{i\in[n]:\ztrue(i)=u,\tz(i)=u'\}$ for any $u,u'\in[n]$. 
We have
\begin{align*}
B_1&=\prod_{u=1}^k\prod_{u'=1}^k\exp\Bigg(-\frac{1}{4}\sum_{\substack{\{i:\ztrue(i)=u,\\\tz(i)=u' \}}}\theta_i\sum_{\substack{\{j:\ztrue(j)=u,\\\tz(j)\neq u'\}}}\theta_j(\sqrt{p}-\sqrt{q})^2\Bigg)\\
&=\prod_{u=1}^k\prod_{u'=1}^k\exp\Bigg(-\frac{1}{4}\sum_{i\in \Gamma_{u,u'}}\theta_i\sum_{j\in \mathcal{C}_u\setminus \Gamma_{u,u'}}\theta_j(\sqrt{p}-\sqrt{q})^2\Bigg)\\
&=\prod_{u=1}^k\prod_{u'=1}^k\exp\Bigg(-\frac{1}{4}\sum_{i\in \Gamma_{u,u'}}\theta_i\bigg(\sum_{j\in \mathcal{C}_u}\theta_j-\sum_{j\in \Gamma_{u,u'}}\theta_j\bigg)(\sqrt{p}-\sqrt{q})^2\Bigg).
\end{align*}



To further proceed, we need to lower bound all $|\mathcal{C}_u\setminus \Gamma_{u,u'}|$ for all $u\neq u'$. 
To this end, we essentially follow the arguments leading to Lemma A.1 of \cite{zhang2015minimax}.
Let $n_{\max}$ and $n_{\min}$ be the maximum and the minimum community sizes.
We argue that we must have 
$|\mathcal{C}_u\setminus \Gamma_{u,u'}|\geq n_{\min}/9$ for all $u'\neq u$.
Indeed, if this were not the case, we could switch the labels $u$ and $u'$ in $z$.
This could reduce the Hamming distance between $z$ and $\tz$ by at least (see \prettyref{fig:Hamming} for illustration) 
\begin{align*}
& \hskip -2em |\Gamma_{u,u'}| - |\mathcal{C}_u \setminus \Gamma_{u,u'}| - |\sth{i:\tz(i) =u'}\setminus \Gamma_{u,u'}|\\
& \geq n_{\min}-\frac{1}{9}n_{\min} - \frac{1}{9}n_{\min} - (n_{\max} - (n_{\min}-\frac{1}{9}n_{\min}))\\
& \geq \frac{n}{k}\Big( \frac{5}{3\beta} - \beta \Big) > 0.
\end{align*}
Here, the last inequality holds when $1\leq \beta < \sqrt{5/3}$.
This leads to a contradiction since by definition, no permutation of the labels should be able to reduce $\ell(\tz,z)$.  

\begin{figure}[!tb]
\centering
\includegraphics[width=0.45\textwidth]{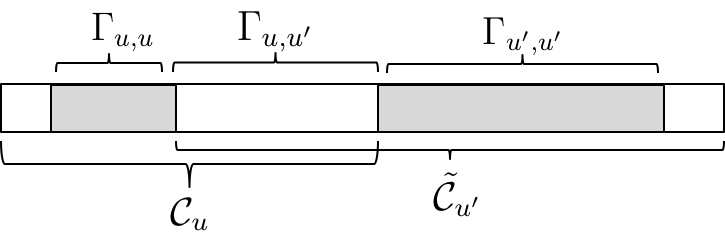}\quad\quad
\includegraphics[width=0.45\textwidth]{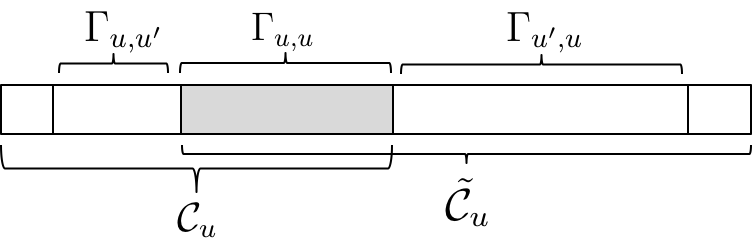}
\caption{In the left panel we display nodes in $\mathcal{C}_u\cup\tilde{\mathcal{C}}_{u'}$, where $\tilde{\mathcal{C}}_{u'}=\{i:\tz(i)=u'\}$. We also define  $\tilde{\mathcal{C}}_{u}$ in the same way. The gray parts indicates nodes correctly clustered, i.e., $\{i\in\mathcal{C}_u\cup\tilde{\mathcal{C}}_{u'}: \tz(i)=\ztrue(i)\}$. Note $|\Gamma_{u,u}|\leq |\mathcal{C}_u\setminus \Gamma_{u,u'}|$ and $|\Gamma_{u',u'}|\leq |\tilde{\mathcal{C}}_{u'}\setminus \Gamma_{u',u'}|$. The right panel displays the same nodes but after the labels $u$ and $u'$ flipped, and the gray part indicating nodes correctly clustered after flipping. \label{fig:Hamming}}
\end{figure}

%

In each $\Gamma_{u,u'}, \forall 1\leq u,u'\leq k$ we can find an arbitrary subset $\Gamma'_{u,u'}\subset \Gamma_{u,u'}$ such that $|\Gamma'_{u,u'}|=\eta |\Gamma_{u,u'}|$. In this way $\Gamma'\triangleq \cup_{u\in [k]}\cup_{u'\neq u}\Gamma'_{u,u'}$ satisfies $|\Gamma'|= \eta |\Gamma|\leq \eta m$ and $\sum_{i\in \Gamma'_{u,u'}}\theta_i\leq 2|\Gamma'_{u,u'}|\leq 2\eta |\mathcal{C}_u|$.


%

Note that for $\eta = o(1)$,
\begin{align*}
\sum_{i\in \mathcal{C}_u}\theta_i-\sum_{i\in \Gamma_{u,u'}}\theta_i 
\geq \theta_{\min}|\mathcal{C}_u\setminus \Gamma_{u,u'}|\geq \frac{\nmin\theta_{\min}}{9}
\geq 2\eta \frac{\beta n}{k} \geq 2\eta |\mathcal{C}_u|.
\end{align*}
Together with the property of the function $x(1-x), x\in[0,1]$, we have
\begin{align*}
& \sum_{i\in \Gamma_{u,u'}}\theta_i\bigg(\sum_{i\in \mathcal{C}_u}\theta_i-\sum_{i\in \Gamma_{u,u'}}\theta_i\bigg)\geq \sum_{i\in \Gamma'_{u,u'}}\theta_i \bigg(\sum_{i\in \mathcal{C}_u}\theta_i-\sum_{i\in \Gamma'_{u,u'}}\theta_i\bigg)\\
& ~~~~~~~ \geq \sum_{i\in \Gamma'_{u,u'}}\theta_i \big((1-\delta)|\mathcal{C}_u|-2\eta|\mathcal{C}_u|\big)
\geq \sum_{i\in \Gamma'_{u,u'}}\theta_i  (1-\delta-2\eta)\nmin .
\end{align*}
Then
\begin{align*}
B_1&\leq \prod_{u=1}^k\prod_{u'=1}^k\exp\bigg(-\frac{1}{4}\sum_{i\in \Gamma'_{u,u'}}\theta_i  (1-\delta-2\eta)\nmin(\sqrt{p}-\sqrt{q})^2\bigg) \\
&= \prod_{u=1}^k\exp\bigg(-\frac{1}{4}\sum_{i\in \Gamma'_{u}}\theta_i  (1-\delta-2\eta)\nmin(\sqrt{p}-\sqrt{q})^2\bigg) \\
&\leq \prod_{i\in \Gamma'}\exp\left(-\frac{1}{4}\theta_i(1-\delta-2\eta)\nmin(\sqrt{p}-\sqrt{q})^2\right)
\end{align*}
Thus with the same bound on $B_2$ we get
\begin{align*}
\mathbb{P}\left(L(\tz)>L(\ztrue)\right)&\leq  \prod_{i\in \Gamma'}\exp\left(-\frac{1}{2}\theta_i\left(1-\delta-2\eta\right)\nmin(\sqrt{p}-\sqrt{q})^2\right).
\end{align*}
Note we have an extra $1/2$ factor inside the exponent compared with Case 2. Since for each $\Gamma$ we can find a subset $\Gamma'$ with $|\Gamma'|=\eta|\tau(\Gamma)|\geq \eta m/\folds$ satisfying the above inequality, we have
\begin{align*}
& \p(n\ell(\hat{z},\ztrue)=m)\\
&\leq \sum_{\tilde{z}:|\Gamma|=m}k^m\prod_{i\in \Gamma'}\exp\left(-\frac{1}{2}\theta_{i}(1-\delta-2\eta)\nmin(\sqrt{p}-\sqrt{q})^2\right)\\
&\leq k^m \binom{m\folds}{m}\binom{m}{\eta m/\folds}\frac{1}{(\eta m/\folds)!}\Big(\sum_{i=1}^n\exp\Big(-\frac{1}{2}\theta_{i}(1-\delta-2\eta)\nmin(\sqrt{p}-\sqrt{q})^2\Big)\Big)^{\eta m/\folds}\\
&\leq (ek\folds)^m\left(\frac{e^2\folds nR^{1/2}_{\delta+2\eta}}{\eta^2m/\folds}\right)^{\eta m/\folds},
\end{align*}
where the last equality is due to Cauchy-Schwarz.

To sum up, till now we have derived the probability $\p(n\ell(\hat{z},\ztrue)=m)$ for each $1\leq m\leq n$ as follows
\begin{equation}
\p(n\ell(\hat{z},\ztrue)=m)\leq
\begin{cases}
(knMR_{\delta+2\eta})^m, & 1\leq m\leq \folds\\
(ek\folds)^{m}\left(\frac{e^2MnR_{\delta+2\eta}}{m/\folds}\right)^{m/\folds}, 
& \folds<  m\leq \frac{\eta n}{k}\\
(ek\folds)^m\left(\frac{e^2MnR^{1/2}_{\delta+2\eta}}{\eta^2m/M}\right)^{\eta m/M},
& m>\frac{\eta n}{k}.
\end{cases}
\label{eq:ind-prob-bd}
\end{equation}

\paragraph{Step 3: bounding $\mathbb{E}\ell(\hat{z},\ztrue)$}
As we have pointed out in the proof outline, we shall combine \eqref{eq:basic-mle} with \eqref{eq:ind-prob-bd} in this step to finish the proof. 
To this end, 
we divide the argument into three cases according to different possible growth rates of $R_{\delta+2\eta}$.

\smallskip
\subparagraph{Case 1: $R_{\delta+2\eta}\leq \frac{1}{2(ek\folds)^{\folds+2} n}$} Recall that $m'=\eta n/k$. Then 
\begin{align*}
\mathbb{E}n\ell(\hat{z},\ztrue)=\mathbb{P}(n\ell(\hat{z},\ztrue)=1)+\sum_{m=2}^{m'} m\mathbb{P}(n\ell(\hat{z},\ztrue)=m)+\sum_{m=m'+1}^nm\mathbb{P}(n\ell(\hat{z},\ztrue)=m).
\end{align*}
We have $\mathbb{P}(n\ell(\hat{z},\ztrue)=1)=kMnR_{\delta+2\eta}$ which is upper bounded by $1/2$. Together with $(ek\folds)^\folds e^2\folds nR_{\delta+2\eta}\leq 1/2$,  we have
\begin{align*}
\sum_{m=2}^{m'} m\mathbb{P}(n\ell(\hat{z},\ztrue)=m)&=\sum_{m=2}^{\folds} m\mathbb{P}(n\ell(\hat{z},\ztrue)=m)+\sum_{m=\folds +1}^{m'} m\mathbb{P}(n\ell(\hat{z},\ztrue)=m)\\
&\leq \sum_{m=2}^{\folds} m 2^{-m} + \sum_{m=\folds +1}^{m'}(ek\folds)^\folds (e^2\folds nR_{\delta+2\eta})m2^{-(m-\folds)/\folds}\\
&\leq C_1(ek\folds)^\folds e^2\folds nR_{\delta+2\eta},
\end{align*}
for some constant $C_1>1$ where the last inequality is due to the properties of power series. For $m > m'$ we have
\begin{align*}
\frac{\mathbb{P}(n\ell(\hat{z},\ztrue)=m)}{nR_{\delta+2\eta}}&\leq (ek\folds)^m\left(\frac{e^2\folds nR^{1/2}_{\delta+2\eta}}{\eta^2m/\folds}\right)^{\eta m/\folds-2}\\
&\leq (ek\folds)^m\left(\left(\frac{e^2\folds n}{\eta^2 m/\folds}\right)^\frac{1}{2}\left(\frac{e^2\folds nR_{\delta+2\eta}}{\eta^2 m/\folds}\right)^\frac{1}{2}\right)^{\eta m/\folds-2}.
\end{align*}
We are to show that the above ratio is upper bounded by $e^{-m}$. This is because $e^2\folds n/(\eta^2 m/\folds)\leq \eta^{-3}e^2M^2k$ since $m\geq \eta n/k$ and $e^2\folds nR_{\delta+2\eta}/(\eta^2 m/\folds)\leq 1/(2(k\folds)^\folds \eta^3n)$ since $nR_{\delta+2\eta}\leq 1/(2(e\folds)^\folds)$. Then for some constant $C_2>0$ we have
\begin{align*}
\frac{\mathbb{P}(n\ell(\hat{z},\ztrue)=m)}{nR_{\delta+2\eta}}\leq (ek\folds)^m\left(\frac{C_2}{\eta^3	n}\right)^{\frac{\eta m}{2\folds}}=\exp\left(m\log(ek\folds)+\frac{\eta m}{2\folds}\log\left(\frac{C_2}{\eta^3 n}\right)\right)\leq e^{-m},
\end{align*}
where the last inequality is due to the fact that $k\leq n^\eta$. By the property of power series we have
\begin{align*}
\sum_{m=m'+1}^nm\mathbb{P}(n\ell(\hat{z},\ztrue)=m)\leq nR_{\delta+2\eta}\sum_{m=m'+1}^n me^{-m}\leq C_3nR_{\delta+2\eta},
\end{align*}
for some constant $C_3>0$. Finally by Jensen's inequality and the assumption $\log k=o(I)$,
\begin{align*}
\mathbb{E}n\ell(\hat{z},\ztrue)\leq kMnR_{\delta+2\eta}+C_1(ek\folds)^\folds e^2\folds nR_{\delta+2\eta}+C_3nR_{\delta+2\eta} = n\exp(-(1-o(1))I).
\end{align*}

\subparagraph{Case 2: $R_{\delta+2\eta}\geq \frac{\folds\log n}{(ek\folds)^{\folds+2}n}$} 
Let $m_0=2(ek\folds)^{\folds+2}nR_{\delta+2\eta}$. 
Recall that $I\to\infty$ and that $\log k=o(I)$. 
So $\eta^{-1}=o(I)$ and $m_0\leq m'$.
We have
$$
\mathbb{E}\ell(\hat{z},\ztrue)\leq \frac{m_0}{n} +\sum_{m=m_0+1}^{m'}\mathbb{P}\left(n\ell(\hat{z},\ztrue)=m\right)+\sum_{m>m'}\mathbb{P}\left(n\ell(\hat{z},\ztrue)=m\right).
$$
To obtain the last display, we divide both sides of \eqref{eq:basic-mle} by $n$, replace all the $m$'s in front of the probabilities in the summation by $n$ and then upper bound the first $m_0$ probabilities by one.
To further bound the right side of the last display, we have
\begin{align*}
& \sum_{m=m_0+1}^{m'}\mathbb{P}\left(n\ell(\hat{z},\ztrue)=m\right) \leq \sum_{m=m_0+1}^{m'}\left(\frac{(ek\folds)^{\folds+2} nR_{\delta+2\eta}}{m_0}\right)^{m/\folds}\\
& \leq \sum_{m=m_0+1}^{m'}2^{-m/\folds}\leq M2^{-m_0/\folds}.
\end{align*}
Since $m_0\geq 2\folds\log n$, we have $2^{-m/\folds}\leq 2^{-2\log n}\leq m_0/n$. Thus
\begin{align*}
\sum_{m=m_0+1}^{m'}\mathbb{P}\left(n\ell(\hat{z},\ztrue)=m\right)\leq \frac{Mm_0}{n}.
\end{align*}
For $m\geq m'$, we are going to show $\mathbb{P}\left(n\ell(\hat{z},\ztrue)=m\right)\leq 2^{-\eta m/\folds}$. We have
\begin{align*}
\mathbb{P}\left(n\ell(\hat{z},\ztrue)=m\right)\leq \left(\frac{(ek\folds)^{\frac{\folds}{\eta}}e^2\folds^2 nR^{1/2}_{\delta+2\eta}}{\eta^2m'}\right)^{\eta m/\folds}\leq \left(\frac{(ek\folds)^{\frac{\folds}{\eta}+3}R^{1/2}_{\delta+2\eta}}{\eta^3}\right)^{\eta m/\folds}.
\end{align*}
Since $R_{\delta+2\eta}\leq \exp(-(1-\delta-2\eta)I)$ by Jensen's inequality, $\log k=o(I)$ and $\eta^{-1} =o(I)$, we have $\eta^{-3}(ek\folds)^{\frac{\folds}{\eta}+3}R^{1/2}_{\delta+2\eta}\leq 1/2$. Then
\begin{align*}
\sum_{m>m'}\mathbb{P}\left(n\ell(\hat{z},\ztrue)=m\right)\leq \sum_{m>m'}2^{-\eta m/\folds}\leq \eta^{-1} M2^{-\eta^2n/\folds}
\leq \frac{m_0}{n}.
\end{align*}
Thus by Jensen's inequality $\mathbb{E}\ell(\hat{z},\ztrue)\leq (2+\folds)m_0/n\leq \exp(-(1-o(1))I)$.

\smallskip
\subparagraph{Case 3: $\frac{1}{2(ek\folds)^{\folds+2} n}<R_{\delta+2\eta}< \frac{\folds\log n}{(ek\folds)^{\folds+2}n}$}
Let $m_0=2\folds \log n$.
As we have shown in Case 2, $M\log n\leq m'$. Then 
\begin{align*}
\mathbb{E}\ell(\hat{z},\ztrue)\leq \frac{m_0}{n}+\sum_{m=m_0+1}^{m'}\mathbb{P}\left(n\ell(\hat{z},\ztrue)=m\right)+\sum_{m>m'}\mathbb{P}\left(n\ell(\hat{z},\ztrue)=m\right).
\end{align*}
We have
\begin{align*}
 \sum_{m=m_0+1}^{m'}\mathbb{P}\left(n\ell(\hat{z},\ztrue)=m\right)&\leq \sum_{m=m_0+1}^{m'}\left(\frac{(ek\folds)^{\folds+2} nR_{\delta+2\eta}}{m_0}\right)^{m/\folds}\\
 & \leq  \sum_{m=m_0+1}^{m'}2^{-m/\folds}\leq M2^{-m_0}\leq \frac{m_0}{n}.
\end{align*}
For $m>m'$, in Case 2 we have shown $\sum_{m>m'}\mathbb{P}\left(n\ell(\hat{z},\ztrue)=m\right)\leq \sum_{m>m'}2^{-\eta m/\folds}\leq \eta^{-1} M2^{-\eta^2n/\folds}$, which is also upper bounded by $m_0/n$. Together we have $\mathbb{E}\ell(\hat{z},\ztrue)\leq 3m_0/n$. Since $2(ek\folds)^{\folds+2}R_{\delta+2\eta}\geq 1/n $ and $\log k=o(I)$, we have $n\exp(-(1-o(1))I)\geq M\log n$ for some positive sequence $o(1)$. Then $\mathbb{E}\ell(\hat{z},\ztrue)\leq \exp(-(1-o(1))I)$.




\appendix

\section{Additional Proofs of Main Results}

\subsection{Proof of \prettyref{thm:upper-nbar} for $k=2$}
\label{sec:minimax-2}

By the definition of the loss function,  $n\ell(\tz,\ztrue)\leq n/2$ for any $\tilde{z}\in[2]^n$. Therefore, we only need to calculate $\mathbb{P}(n\ell(\hat{z},z)=m)$ for $1\leq m \leq n/2$. 
We will keep using the definitions $\Gamma_{u,v}=\{i:\ztrue(i)=u,\tz(i)=v\}$, $\mathcal{C}_u=\Gamma_{u,1}\cup \Gamma_{u,2}$ and $\Gamma=\Gamma_{1,2}\cup \Gamma_{2,1}$ for all $u,v\in[2]$.
Recall in \prettyref{eq:mle-1} we have shown
\begin{align*}
\p\left(L(\tz)>L(\ztrue)\right) \leq \prod_{\substack{i< j\\ \tilde{Y}_{ij}\neq Y_{ij}}}\exp\left(-\frac{1}{2}\theta_i\theta_j(\sqrt{p}-\sqrt{q})^2\right).
\end{align*}
Since
\begin{eqnarray*}
\sum_{\substack{i< j\\ \tilde{Y}_{ij}\neq Y_{ij}}}\theta_i\theta_j&=&\sum_{i\in \Gamma_{1,1}}\theta_i\sum_{i\in \Gamma_{1,2}}\theta_i+\sum_{i\in \Gamma_{1,1}}\theta_i\sum_{i\in \Gamma_{2,1}}\theta_i+\sum_{i\in \Gamma_{1,2}}\theta_i\sum_{i\in \Gamma_{2,1}}\theta_i+\sum_{i\in \Gamma_{2,1}}\theta_i\sum_{i\in \Gamma_{2,2}}\theta_i\\
&\geq& \sum_{i\in\Gamma}\theta_i\left((1-\delta)n-\sum_{i\in\Gamma}\theta_i\right),
\end{eqnarray*}
we have
\begin{equation}
\p\left(L(\tz)>L(\ztrue)\right) \leq \exp\left(-\frac{1}{2}\sum_{i\in\Gamma}\theta_i\left((1-\delta)n-\sum_{i\in\Gamma}\theta_i\right)(\sqrt{p}-\sqrt{q})^2\right).\label{eq:nimanima}
\end{equation}
Denote $m'=\eta n$ for some $\eta=o(1)$ satisfying $\eta^{-1}=o(I)$. We define $\tau$ exactly the same way as in \prettyref{sec:messi}. We use the notation
\begin{equation*}
R_{t}=\frac{1}{n}\sum_{i=1}^n\exp\left(-(1-t)\theta_i\frac{n}{2}(\sqrt{p}-\sqrt{q})^2\right).	
\end{equation*}
Recall the constant $M$ used in \prettyref{sec:messi}.
\newline
\textbf{Case 1: $1\leq m\leq \folds$}. By (\ref{eq:nimanima}), we have
\begin{align*}
\mathbb{P}(n\ell(\hat{z},\ztrue)=m) \leq \sum_{|\Gamma|=m}\exp\left(-\frac{1}{2}\sum_{i\in\Gamma}\theta_i\left((1-\delta)n-\sum_{i\in\Gamma}\theta_i\right)(\sqrt{p}-\sqrt{q})^2\right).
\end{align*}
Using the argument in \prettyref{sec:messi}, we have $\p(n\ell(\tz,\ztrue)=m)\leq (2n\folds R_{\delta+2\eta})^m$.
\newline
\textbf{Case 2: $M\leq m\leq m'$}.
We have $\sum_{i\in\tau(\Gamma)}\theta_i\leq 2|\tau(\Gamma)|\leq 2\eta n$ due to \prettyref{eq:Cu-}. Note that $n-\sum_{i\in\Gamma}\theta_i=\sum_{i\in \Gamma^c}\theta_i\geq |\Gamma^c|\theta_{\min}\geq n\theta_{\min}/2$. For any $m\leq m'$, using the monotone property of $x(1-x)$ for $x\in[0,1]$, we have 
\begin{align*}
\sum_{i\in\Gamma}\theta_i\left((1-\delta)n-\sum_{i\in\Gamma}\theta_i\right)\geq \sum_{i\in\tau(\Gamma)}\theta_i\left((1-\delta)n-\sum_{i\in\tau(\Gamma)}\theta_i\right)\geq \sum_{i\in\tau(\Gamma)}\theta_i(1-\delta-2\eta)n.
\end{align*}
Thus, by (\ref{eq:nimanima}), we have
\begin{align*}
\p\left(L(\tz)>L(\ztrue)\right) &\leq \prod_{i\in\tau(\Gamma)}\exp\left(-\theta_i(1-\delta - 2\eta)\frac{n}{2}(\sqrt{p}-\sqrt{q})^2\right).
\end{align*}
Using the argument in \prettyref{sec:messi}, we have
\begin{align*}
\p(n\ell(\tz,\ztrue)=m)\leq (2e\folds)^{m}\left(\frac{e^2MnR_{\delta+2\eta}}{m/\folds}\right)^{m/\folds}.
\end{align*}
\newline
\textbf{Case 3: $m> m'$}. Under this scenario, we can take an arbitrary subset $\Gamma'\subset  \tau(\Gamma)$ such that $|\Gamma'|=\eta m/\folds$, which leads to $\sum_{i\in\Gamma'}\theta_i\leq 2\eta n$. Recall $n-\sum_{i\in\Gamma}\theta_i=\sum_{i\in \Gamma^c}\theta_i\geq |\Gamma^c|\theta_{\min}\geq n\theta_{\min}/2$. Using (\ref{eq:nimanima}), together with the property of $x(1-x)$ for $x\in[0,1]$, we have
\begin{align*}
\p\left(L(\tz)>L(\ztrue)\right) &\leq \exp\left(-\frac{1}{2}\sum_{i\in\Gamma'}\theta_i\left((1-\delta)n-\sum_{i\in\Gamma'}\theta_i\right)(\sqrt{p}-\sqrt{q})^2\right)\\
&\leq \prod_{i\in\Gamma'}\exp\left(-\theta_i(1-\delta-2\eta)\frac{n}{2}(\sqrt{p}-\sqrt{q})^2\right).
\end{align*}
By the argument used in \prettyref{sec:messi}, we have
\begin{align*}
\p(n\ell(\tz,\ztrue)=m)\leq  (2e\folds)^m\left(\frac{e^2\folds nR_{\delta+2\eta}}{\eta^2m/\folds}\right)^{\eta m/\folds}.
\end{align*}
Note that the above rate involves $R_{\delta+2\eta}$ instead of $R_{\delta+2\eta}^{1/2}$ for the case $k\geq 3$ in \prettyref{sec:messi}. This results in a tighter bound for  $\p\left(L(\tz)>L(\ztrue)\right)$.

Finally by applying the same techniques used in \prettyref{sec:messi}, we obtain the desired bound for $\E \ell(\tz,\ztrue)$. 


\subsection{Proof of Theorem \ref{thm:lower-nbar}}


We only state the proof for the case $k\geq 3$. The proof for the case $k=2$ can be derived using essentially the same argument.
For a label vector, recall the notation $n_u(z)=|\{i\in[n]:z(i)=u\}|$.
Under Condition N, there exists a $z^*\in [k]^n$ such that $n_1(z^*) \leq n_2(z^*) \leq n_3(z^*)\leq \cdots\leq n_k(z^*)$ with $n_1(z^*) = n_2(z^*)=\floor{n/(\beta k)}$, 
and that $(n_u(z^*))^{-1}{\sum_{i:z^*(i)=u}\theta_i} \in (1-\frac{\delta}{4}, 1+\frac{\delta}{4})$ for all $u\in [k]$. 

\smallskip
$1^\circ$
{As a first step, we define a community detection problem on a subset of the parameter space such that we can avoid the complication of label permutation.} 
To this end, 
given $z^*$, for each $u\in[k]$, let $T_u\subset\{i:z^*(i)=u\}$ with cardinality $\ceil{n_u(z^*)-\frac{\delta n}{4k^2\beta}}$ collect the indices of the largest $\theta_i$'s in $\{\theta_i:{z(i)=u}\}$. 
Let $T=\cup_{u=1}^k T_u$. 
Define
\begin{equation*}
\begin{aligned}
Z^*=\Big\{z\in [k]^n: &~~z(i)=z^*(i)\text{ for all }i\in T, \frac{n}{\beta k}-1 \leq n_u(z)\leq \frac{\beta n}{k}+1 \text{ for all }u\neq v\in [k]
\Big\}.		
\end{aligned}
\end{equation*}
Since $z^*\in Z^*$, the latter is not empty.
By the definition of $T$ and Condition N, $\max_{i\in T^c}\theta_i$ is bounded by a constant. Thus,
 for any $z$ such that $z(i)=z^*(i)$ for all $i\in T$, we have
\begin{equation*}
\frac{1}{n_u(z)}\sum_{\{i:z(i)=u\}}\theta_i\in (1-\delta, 1+\delta), \quad\text{for all }u\in [k].
\end{equation*}
Therefore, we can define a smaller parameter space $\calP_n^0 = \calP_n^0(\theta, p, q, k,\beta;\delta)\subset \calP_n(\theta, p, q, k,\beta;\delta)$ where
\begin{equation}
 \calP_n^0(\theta, p, q, k,\beta;\delta)
=\left\{P:P_{ij}=\theta_i\theta_jB_{z(i)z(j)}, z\in {Z}^*, B_{uu}=p,\forall u\in[k],\; B_{uv}=q,\forall u\neq v\right\}.
\end{equation}
So we have
\begin{equation}
	\label{eq:lowbd-reduce}
\inf_{\hat{z}}\sup_{\calP_n(\theta, p, q, k,\beta;\delta)}\mathbb{E} n\ell(\hat{z},z)
\geq 
\inf_{\hat{z}}\sup_{\calP_n^0}\mathbb{E} n\ell(\hat{z},z)
= \inf_{\hat{z}}\sup_{\calP_n^0}\mathbb{E}H(\hat{z},z),	
\end{equation}
where $H(\cdot,\cdot)$ is the Hamming distance. 
Here, the equality is due to the fact that for any two $z_1,z_2\in {Z}^*$ they share the same labels for all indices in $T$. Thus, we have $H(z_1,z_2)\leq \frac{1}{2}\delta \frac{n}{\beta k}$, and so when $\delta$ is small we have
$n\ell(z_1,z_2)=\inf_{\pi\in \Pi_k}H(\pi(z_1),z_2)= H(z_1,z_2)$.

\smallskip

$2^\circ$
We now turn to lower bounding the rightmost side of \eqref{eq:lowbd-reduce}, which relies crucially on our previous discussion in \prettyref{sec:testing}.
To this end, observe that
\begin{align}
\inf_{\hat{z}}\sup_{{\mathcal{P}}_0}\mathbb{E}H(\hat{z},z) &\geq \inf_{\hat{z}}\ave_{{Z^*}}\mathbb{E}H(\hat{z},z) 
\geq
\sum_{i\in T^c}\inf_{\hat{z}(i)}
\ave_{{Z^*}}\mathbb{P}(\hat{z}(i)\neq z(i)) \nonumber \\
&\geq c\frac{\delta}{k}n\frac{1}{|T^c|}\sum_{i\in T^c}\inf_{\hat{z}(i)}\ave_{{Z^*}}\mathbb{P}(\hat{z}(i)\neq z(i)),
\label{eq:lowbd-reduce-1}
\end{align}
for some constant $c>0$.
Here, $\ave$ stands for arithmetic average. 
The first inequality holds since minimax risk is lower bounded by Bayes risk.
The second inequality is due to the fact that for any $z\in Z^*$, $z(i)=z^*(i)$ for all $i\in T$, and so infimum can be taken over all $\hat{z}$ with $\hat{z}(i) = z^*(i)$ for $i\in T$.
The last inequality holds because $|T^c|\geq c\frac{ \delta n}{k}$ for some constant $c$ by its definition.

We now focus on lower bounding $\inf_{\hat{z}(i)}\ave_{{Z^*}}\mathbb{P}(\hat{z}(i)\neq z(i))$ for each $i\in T^c$.
Without loss of generality, suppose $1\in T^c$. 
Then we partition $Z^*$ into disjoint subsets $Z^* = \cup_{u = 1}^k Z^*_u$ where 
\begin{equation*}
Z^*_u=\{z\in Z^*: z(1)=u\},\quad u \in [k].
\end{equation*}
Note that for any $u\neq v$, there is a 1-to-1 correspondence between the elements in $Z^*_u$ and $Z^*_v$.
In particular, for each $z\in Z^*_u$, there exists a unique $z'\in Z^*_v$ such that $z(i) = z'(i)$ for all $i\neq 1$.
Thus, we can simultaneously index all $\{Z^*_u\}_{u=1}^k$ by the second to the last coordinates of the $z$ vectors contained in them.
We use $z_{-1}$ to indicate the subvector in $[k]^{n-1}$ excluding the first coordinate and collect all the different $z_{-1}$'s into a set $Z_{-1}$.
Then we have
\begin{align}
& \inf_{\hat{z}(1)}\ave_{{Z^*}}\mathbb{P}(\hat{z}(1)\neq z(1)) \nonumber \\
&~~ \geq \frac{1}{k(k-1)}\sum_{u< v}\inf_{\hat{z}(1)}\left(\ave_{Z^*_u}\mathbb{P}(\hat{z}(1)\neq u)+\ave_{Z^*_v}\mathbb{P}(\hat{z}(1)\neq v)\right) \nonumber \\
&~~ \geq \frac{1}{k(k-1)} 
\inf_{\hat{z}(1)}\left(\ave_{Z^*_1}\mathbb{P}(\hat{z}(1)\neq 1)+\ave_{Z^*_2}\mathbb{P}(\hat{z}(1)\neq 2)\right) \nonumber \\
&~~ \geq \frac{1}{k(k-1)} \frac{1}{|Z_{-1}|} \sum_{z_{-1}\in Z_{-1}}
\inf_{\hat{z}(1)}\left(\mathbb{P}_{z = (1, z_{-1})}(\hat{z}(1)\neq 1)+\mathbb{P}_{z=(2,z_{-1})}(\hat{z}(1)\neq 2)\right).
\label{eq:lowbd-reduce-2}
\end{align}

Note that by the definition of $z^*$ and $Z^*$, it is guaranteed that for either $(1, z_{-1})$ or $(2,z_{-1})$, $|\frac{n_1}{n_2}-1|=o(1)$.
Therefore, we can apply Lemma \ref{lem:t-lower-aprox} to bound from below each term in the summation of the rightmost side of the last display by 
$\exp\pth{-(1+\eta)\theta_1\frac{n}{\beta k}(\sqrt{p}-\sqrt{q})^2}$ for some $\eta=o(1)$. 
Together with \eqref{eq:lowbd-reduce} -- \eqref{eq:lowbd-reduce-2}, this implies that 
\begin{align*}
\inf_{\hat{z}}\sup_{\calP}\mathbb{E} \ell(\hat{z},z) 
& \geq c\frac{\delta}{k^3}\frac{1}{|T^c|} \sum_{i\in T^c} \exp\pth{-(1+\eta)\theta_i\frac{n}{\beta k}(\sqrt{p}-\sqrt{q})^2} \\
& \geq c\frac{\delta}{k^3}\frac{1}{n} \sum_{i=1}^n \exp\pth{-(1+\eta)\theta_i\frac{n}{\beta k}(\sqrt{p}-\sqrt{q})^2} \\
& \geq c\frac{\delta}{k^3} \exp\pth{-(1+\eta) I} 
= \exp\pth{-(1+o(1)) I}.
\end{align*}
Here, the first inequality is simple algebra. 
The second inequality holds since $T^c$ only contains within each community defined by $z^*$ the nodes with the smallest $\theta_i$'s.
The third inequality is a direct application of Jensen's inequality, and the last equality holds since $\log k=o(I)$ and $\log \frac{1}{\delta} = o(I)$.
This completes the proof.
\subsection{Proofs of \prettyref{lem:initial3} and \prettyref{cor:ini}}
We now prove Lemma \ref{lem:initial3} and Corollary \ref{cor:ini}, which characterize the performance of Algorithm \ref{alg:algl3}. To prove Lemma \ref{lem:initial3}, we need two auxiliary lemmas, whose proofs will be given in Appendix \ref{sec:pf-aux}.
In the rest of this part, we let 
$P = (P_{ij}) = (\theta_i\theta_j B_{z(i)z(j)})$ for notational convenience.

{The following lemma characterizes the connection between measure on misclassification and geometry of the point cloud. 
The result is not tied to any specific clustering algorithm or choice of norm.}

\begin{lemma}\label{lem:S-anderson}
Let $z\in[k]^n$ be the true label for a DCBM in $\calP_n'(\theta,p,q,k,\beta;\delta,\alpha)$.
Given any $\tilde{z}\in (\{0\}\cup[k])^n$, any $\{\tilde{v}_u\}_{u\in[k]}, \{V_i\}_{i\in[n]}\subset\mathbb{R}^n$ and any $b>0$, define
$$\tilde{V}_i=\tilde{v}_{\tilde{z}(i)}\quad\text{for all }i\in S_0^c,$$
where $S_0=\{i\in[n]:\tilde{z}(i)=0\}$.
Then, for any norm $\norm{\cdot}$ satisfying triangle inequality, as long as
\begin{equation}
\min_{z(i)\neq z(j)}\norm{V_i-V_j}\geq 2b,\label{eq:tttgap}
\end{equation}
we have
$$\min_{\pi\in\Pi_k}\sum_{\{i:\tilde{z}(i)\neq \pi(z(i))\}}\theta_i\leq \sum_{i\in S_0}\theta_i+(2\beta^2+1)\sum_{i\in S}\theta_i,$$
where $S=\left\{i\in S_0^c:\norm{\tilde{V}_i-V_i}\geq b\right\}$.
\end{lemma}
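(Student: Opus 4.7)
The plan is to leverage the separation condition on the target vectors $V_i$ to set up a partial correspondence between the estimated labels used on the ``clean'' set $S^c\cap S_0^c$ and the true labels, and then use a combinatorial counting argument to bound the extra misclassification contributed by true clusters that get ``split'' across several estimated labels.

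First I would observe that for any $i\in S^c\cap S_0^c$ the strict inequality $\|\tilde V_i-V_i\|<b$ holds by the definition of $S$. Combining this with the hypothesis $\|V_i-V_j\|\geq 2b$ whenever $z(i)\neq z(j)$ and the triangle inequality forces $\|\tilde V_i-\tilde V_j\|>0$ for any two clean nodes $i,j$ with distinct true labels, so that $\tilde z(i)\neq \tilde z(j)$. This well-definedness allows me to introduce a map $\tau(v)=z(i)$, for any clean $i$ with $\tilde z(i)=v$; the map sends the set of estimated labels appearing on clean nodes into $[k]$.

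Next I would partition $[k]=R_1\sqcup R_2\sqcup R_3$, where $R_1$ collects the true labels outside the image of $\tau$, $R_2$ consists of true labels $u$ with $|\tau^{-1}(u)|=1$, and $R_3$ consists of those with $|\tau^{-1}(u)|\geq 2$. Counting preimages gives $|R_2|+2|R_3|\leq k=|R_1|+|R_2|+|R_3|$, yielding the key combinatorial inequality $|R_3|\leq |R_1|$. Define $\pi\in\Pi_k$ by mapping each $u\in R_2\cup R_3$ to a chosen representative of $\tau^{-1}(u)$ and completing the assignment bijectively on $R_1$. Under this choice every clean node $i$ with $z(i)\in R_2$ is correctly labeled by $\pi$, so the set $\{i:\tilde z(i)\neq \pi(z(i))\}$ is contained in $S_0\cup (S\cap S_0^c)\cup \bigcup_{u\in R_3}\mathcal{C}_u$.

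The remaining work is to estimate $\sum_{u\in R_3}\sum_{i\in \mathcal{C}_u}\theta_i$. Using $n_u\leq \beta n/k+1$ together with $(1/n_u)\sum_{i\in \mathcal{C}_u}\theta_i\leq 1+\delta$ bounds each inner sum by a constant multiple of $\beta n/k$, so the total is at most a constant multiple of $|R_3|\cdot \beta n/k$; invoking $|R_3|\leq |R_1|$ and the matching lower bound $\sum_{i\in \mathcal{C}_u}\theta_i\geq (1-\delta)(n/(\beta k)-1)$ on clusters $u\in R_1$ converts this into $2\beta^2 \sum_{u\in R_1}\sum_{i\in \mathcal{C}_u}\theta_i$. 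Because every $u\in R_1$ satisfies $\mathcal{C}_u\subseteq S\cup S_0$ by the very definition of $R_1$, the last sum is controlled by $\sum_{i\in S}\theta_i+\sum_{i\in S_0}\theta_i$, and adding up the three contributions to misclassification gives the stated inequality. The delicate step will be this final chain of estimates: the combinatorial bound $|R_3|\leq |R_1|$ and the two-sided control on both cluster sizes and the cluster sums of $\theta_i$ must be deployed so that the split-cluster mass is absorbed into $(2\beta^2+1)\sum_{i\in S}\theta_i$ rather than inflating the coefficient of $\sum_{i\in S_0}\theta_i$.
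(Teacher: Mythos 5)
Your argument tracks the paper's proof essentially line by line: both take the decomposition of $[k]$ into $R_1$, $R_2$, $R_3$ borrowed from \cite{chen2015convexified}, both deduce $|R_3|\leq|R_1|$ from $|R_2|+2|R_3|\leq k=|R_1|+|R_2|+|R_3|$, and both convert cluster counts to $\theta$-mass via the two-sided normalization constraint. The well-definedness of your map $\tau$ is exactly the paper's observation that the separation condition plus the triangle inequality forces distinct estimated labels on clean nodes with distinct true labels, and your choice of $\pi$ and the containment $\{i:\tilde z(i)\neq\pi(z(i))\}\subseteq S_0\cup S\cup\bigcup_{u\in R_3}\mathcal{C}_u$ are both correct.

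The concern you flag at the end is real. For $u\in R_1$ the correct inclusion is $z^{-1}(u)\subseteq S\cup S_0$, not $z^{-1}(u)\subseteq S$; also note that in this lemma $\mathcal{C}_u$ denotes the \emph{clean} part of cluster $u$, which is empty for $u\in R_1$, so the set you need to control is the full cluster $z^{-1}(u)$, not $\mathcal{C}_u$. Carrying the computation through honestly yields the bound $(2\beta^2+1)\bigl(\sum_{i\in S_0}\theta_i+\sum_{i\in S}\theta_i\bigr)$, where the larger coefficient also multiplies the $S_0$ term; this is slightly weaker than the lemma's asymmetric statement with coefficient $1$ on $\sum_{i\in S_0}\theta_i$. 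The paper's own proof passes directly from $\sum_{i\in\cup_{u\in R_1}z^{-1}(u)}\theta_i$ to $\sum_{i\in S}\theta_i$ without addressing the $S_0$ contribution, so the imprecision you spotted is inherited from the source. It is harmless downstream because in Lemma~\ref{lem:initial3} the two terms $\sum_{i\in S_0}\theta_i$ and $\sum_{i\in S}\theta_i$ are bounded by the same expression and only absolute constants are at stake; nevertheless, the bound your argument (and the paper's) actually delivers is the symmetric one.
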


\begin{lemma}\label{lem:P-hat-bound}
Under the settings of Lemma \ref{lem:initial3}, let $\tau=C_1\left(np\norm{\theta}_{\infty}^2+1\right)$ for some sufficiently large $C_1>0$ in Algorithm \ref{alg:algl3}. Then, for any constant $C'>0$, there exists some $C>0$ only depending on $C_1,C'$ and $\alpha$ such that
$$\fnorm{\hat{P}-P}\leq C\sqrt{k(np\norm{\theta}_{\infty}^2+1)},$$
with probability at least $1-n^{-(1+C')}$ uniformly over $\calP_n'(\theta,p,q,k,\beta;\delta,\alpha)$.
\end{lemma}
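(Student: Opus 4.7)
The plan is to exploit the fact that $\hat{P}$ is the best rank-$k$ approximation of $T_\tau(A)$, to compare it against a rank-$k$ proxy of the mean matrix $P = \mathbb{E}A$, and finally to translate an operator-norm bound into a Frobenius-norm bound using the rank constraint. Let $\Theta=\diag(\theta_1,\ldots,\theta_n)$, let $Z\in\{0,1\}^{n\times k}$ be the membership matrix with $Z_{iu}=\indc{z(i)=u}$, and define the rank-$k$ matrix $M=\Theta Z B Z^T\Theta$. Since $P_{ij}=\theta_i\theta_j B_{z(i)z(j)}\indc{i\neq j}$, we have $P = M - D$, where $D=\diag(\theta_i^2 B_{z(i)z(i)})$ satisfies $\opnorm{D}\leq \alpha p\norm{\theta}_\infty^2$ and $\fnorm{D}\leq \sqrt{n}\,\alpha p\norm{\theta}_\infty^2$.

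First I would invoke the following classical concentration bound for the regularized (trimmed) adjacency matrix of a sparse inhomogeneous Erd\H{o}s--R\'enyi model: when $\tau=C_1(np\norm{\theta}_\infty^2+1)$ with $C_1$ large enough, there exist constants $C',C>0$ such that
\begin{equation*}
\opnorm{T_\tau(A)-P}\leq C\sqrt{np\norm{\theta}_\infty^2+1}
\end{equation*}
with probability at least $1-n^{-(1+C')}$. This is essentially the Feige--Ofek-type bound as adapted to heterogeneous Bernoulli models (see e.g.\ \cite{chin2015stochastic,lei2015consistency}); the role of $\tau$ is to kill the atypically-high-degree rows and columns that would otherwise blow up the spectral norm in the sparse regime $np\norm{\theta}_\infty^2=O(1)$. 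This concentration inequality is the main technical obstacle; the rest of the argument is essentially deterministic.

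Given this, since $M$ is a rank-$k$ matrix and $\hat{P}$ is the best rank-$k$ approximation of $T_\tau(A)$ (in both Frobenius and operator norm),
\begin{equation*}
\opnorm{\hat{P}-T_\tau(A)}\leq \opnorm{M-T_\tau(A)}\leq \opnorm{T_\tau(A)-P}+\opnorm{D}.
\end{equation*}
Two triangle-inequality applications give
\begin{equation*}
\opnorm{\hat{P}-M}\leq 2\opnorm{T_\tau(A)-P}+2\opnorm{D}\leq 2C\sqrt{np\norm{\theta}_\infty^2+1}+2\alpha p\norm{\theta}_\infty^2.
\end{equation*}
Since both $\hat{P}$ and $M$ have rank at most $k$, the difference $\hat{P}-M$ has rank at most $2k$, so $\fnorm{\hat{P}-M}\leq\sqrt{2k}\opnorm{\hat{P}-M}$.

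Finally, I would combine via
\begin{equation*}
\fnorm{\hat{P}-P}\leq \fnorm{\hat{P}-M}+\fnorm{D}\leq \sqrt{2k}\,\opnorm{\hat{P}-M}+\sqrt{n}\,\alpha p\norm{\theta}_\infty^2.
\end{equation*}
Because $p\norm{\theta}_\infty^2\leq 1/\alpha$ is forced by the Bernoulli constraint $\theta_i\theta_j B_{z(i)z(j)}\leq 1$, the diagonal contribution satisfies $\sqrt{n}\,p\norm{\theta}_\infty^2\leq\sqrt{np\norm{\theta}_\infty^2}$, which is absorbed into $\sqrt{k(np\norm{\theta}_\infty^2+1)}$. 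Plugging in the operator-norm bound yields $\fnorm{\hat{P}-P}\leq C\sqrt{k(np\norm{\theta}_\infty^2+1)}$ on the same event of probability at least $1-n^{-(1+C')}$, which is the desired conclusion. The only step that needs serious work is the spectral concentration inequality for $T_\tau(A)-P$; everything else is linear algebra plus triangle inequality.
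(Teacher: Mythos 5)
Your overall plan is sound and shares the two essential ingredients with the paper's own proof: the rank-$k$ deterministic proxy ($M$, which the paper calls $P'$) with $P=M-D$, the spectral concentration bound for $\opnorm{T_\tau(A)-P}$ (the paper cites Lemma~5 of \cite{gao2015achieving}), and the rank-$2k$ inequality converting operator norm to Frobenius norm. Where you genuinely diverge is in how you extract control from the Frobenius optimality of $\hat P$. The paper expands $\fnorm{\hat P - T_\tau(A)}^2 \le \fnorm{P'-T_\tau(A)}^2$ into a quadratic inequality in $\fnorm{\hat P-P}$ involving $\sup_{K:\,\fnorm{K}=1,\,\mathrm{rank}(K)\le 2k}\iprod{K}{T_\tau(A)-P}$. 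You instead invoke Eckart--Young--Mirsky to say $\hat P$ is optimal in operator norm as well, chain triangle inequalities for $\opnorm{\hat P - M}$, and then pass to Frobenius norm. Both routes are legitimate and of comparable length; yours, however, introduces an extra $\sqrt{k}\,\opnorm{D}$ term that the paper's expansion avoids, which puts more weight on how carefully you control the diagonal.

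There is a concrete gap in your final absorption step. You assert that $p\norm{\theta}_\infty^2\le 1/\alpha$ is ``forced by the Bernoulli constraint $\theta_i\theta_j B_{z(i)z(j)}\le 1$,'' but that constraint applies only to $i\ne j$; the diagonal entries of $\E A$ are set to zero and are not themselves subject to any probability bound. What the model does give you, after combining the normalization $\frac{1}{n_u}\sum_{z(i)=u}\theta_i\in[1-\delta,1+\delta]$ with $\norm{\theta}_\infty=o(n/k)$, is the weaker statement $p\norm{\theta}_\infty=O(1)$: the community containing the largest $\theta_i$ must contain another node $j$ with $\theta_j\ge 1/2$ for $n$ large, and then $\theta_i\theta_j p\le\theta_i\theta_j B_{uu}\le 1$. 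In contrast, $p\norm{\theta}_\infty^2$ can be unbounded (take a single $\theta_i\asymp\norm{\theta}_\infty\to\infty$ and $p\asymp 1/\norm{\theta}_\infty$; this is compatible with the parameter space). As a result your crude bound $\fnorm{D}\le\sqrt{n}\,\alpha p\norm{\theta}_\infty^2$ is \emph{not} dominated by $\sqrt{np\norm{\theta}_\infty^2+1}$ in general. The fix is to bound the diagonal more tightly: $\fnorm{D}^2 \le \alpha^2 p^2 \sum_i \theta_i^4 \le \alpha^2 p^2 \norm{\theta}_\infty^3 \sum_i\theta_i \le (1+\delta)\alpha^2 p^2\norm{\theta}_\infty^3 n$, and then use $p\norm{\theta}_\infty=O(1)$ to obtain $\fnorm{D}^2 \lesssim np\norm{\theta}_\infty^2$. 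The extra $\sqrt{k}\,\opnorm{D}$ term needs a similar (slightly more tedious) verification, again combining $p\norm{\theta}_\infty=O(1)$ with $\norm{\theta}_\infty = o(n/k)$. With those two corrections your argument closes; without them, the claimed absorption of the diagonal contribution does not follow.
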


\begin{proof}[Proof of Lemma \ref{lem:initial3}]
Let $P_i$ denote the $i\Th$ row of $P$ and $\bar{P}_i=\norm{P_i}_1^{-1}P_i$ the $\ell_1$ normalized row. 
By definition, for sufficiently large values of $n$,
\begin{equation}
\frac{pn}{2\beta k}\leq \frac{\norm{P_i}_1}{\theta_i}
=\sum_{j: j\neq i}\theta_jB_{z(i)z(j)}\leq 2\alpha np,\text{ for any }i\in[n],\label{eq:p1t}
\end{equation}
under the conditions $\delta=o(1)$ and $\norm{\theta}_{\infty}=o(n/k)$.

Note that $\bar{P}_i = \bar{P}_j$ when $z(i) = z(j)$.
Our first task is to lower bound $\norm{\bar{P}_i-\bar{P}_j}_1$ when $z(i)\neq z(j)$, which serves as the separation condition among different clusters. For any $i$ and $j$ such that $z(i)=u\neq v=z(j)$, we assume $\norm{P_i}_1/\theta_i\leq \norm{P_j}_1/\theta_j$ without loss of generality. Then, 
\begin{eqnarray}
\nonumber\norm{\bar{P}_i-\bar{P}_j}_1 &\geq& 
\sum_{l:z(l)=u} |{P}_{il} - {P}_{jl}| 
= \sum_{l:z(l)=u}\left|\frac{\theta_lB_{uu}}{\norm{P_i}_1/\theta_i}-\frac{\theta_lB_{uv}}{\norm{P_j}_1/\theta_j}\right| \\
\nonumber&=& \frac{1}{\norm{P_j}_1/\theta_j}\sum_{l:z(l)=u}\theta_l\left|\frac{\norm{P_j}_1/\theta_j}{\norm{P_i}_1/\theta_i}B_{uu}-B_{uv}\right| \\
\label{eq:magic}&\geq& \frac{p-q}{\norm{P_j}_1/\theta_j}\frac{n}{2\beta k} \\
\label{eq:up1t}&\geq& \frac{p-q}{4\alpha\beta kp}.
\end{eqnarray}
Here, (\ref{eq:magic}) holds since $\frac{\norm{P_j}_1/\theta_j}{\norm{P_i}_1/\theta_i}B_{uu}\geq p$ and $B_{uv}\leq q$, and (\ref{eq:up1t}) is due to (\ref{eq:p1t}). 
By switching $i$ and $j$,
the foregoing argument also works for the case where $\norm{P_i}_1/\theta_i> \norm{P_j}_1/\theta_j$.
 Hence,
\begin{equation}
\min_{z(i)\neq z(j)}\norm{\bar{P}_i-\bar{P}_j}_1\geq  \frac{p-q}{4\alpha\beta kp}. \label{eq:ngap}
\end{equation}

Let $\hat{z}\in(\{0\}\cup[k])^n$ and $\hat{v}_1,...,\hat{v}_k\in\mathbb{R}^n$ denote a solution to the optimization problem (\ref{eq:opt-km}) (with all nodes in $S_0$ assigned to the $0\Th$ community). 
Define matrix $\hat{V}\in \reals^{n\times n}$ with the $i\Th$ row $\hat{V}_i=\hat{v}_{\hat{z}(i)}$.  
If $\hat{z}(i)=0$, set $\hat{V}_i$ as the zero vector.
Define $S=\{i\in[n]:\norm{\hat{V}_i-\bar{P}_i}_1\geq \frac{p-q}{8\alpha\beta kp}\}$ and recall $S_0=\{i\in[n]:\norm{\hat{P}_i}_1=0\}$. 
Then, by the separation condition (\ref{eq:ngap}) and Lemma \ref{lem:S-anderson}, we have
\begin{equation}
\min_{\pi\in\Pi_k}\sum_{i:\hat{z}(i)\neq \pi(z(i))}\theta_i\leq (2\beta^2+1)\sum_{i\in S}\theta_i+\sum_{i\in S_0}\theta_i.\label{eq:s+s0}
\end{equation} 

In what follows, we derive bounds for $\sum_{i\in S}\theta_i$ and $\sum_{i\in S_0}\theta_i$, respectively. 
Recall that $\tilde{P}_i=\norm{\hat{P}_i}_1^{-1}\hat{P}_i$.
By the definition of $\hat{z}$ and $\hat{V}$, we have
\begin{equation}
\sum_{i=1}^n\norm{\hat{P}_i}_1\norm{\hat{V}_i-\tilde{P}_i}_1\leq (1+\epsilon)\sum_{i=1}^n\norm{\hat{P}_i}_1\norm{\bar{P}_i-\tilde{P}_i}_1.
\label{eq:by-def-km}
\end{equation}

In order to bound $\sum_{i\in S}\theta_i$, we first derive a bound for $\sum_{i\in S}\norm{\hat{P}_i}_1$. That is,
\begin{eqnarray}
\label{eq:y.1}\sum_{i\in S}\norm{\hat{P}_i}_1 &\leq& \frac{8\alpha\beta kp}{p-q}\sum_{i\in S}\norm{\hat{P}_i}_1\norm{\hat{V}_i-\bar{P}_i}_1 \\
\nonumber
& \leq &
\frac{8\alpha\beta kp}{p-q}\sum_{i\in S}
\pth{\norm{\hat{P}_i}_1\norm{\hat{V}_i-\tilde{P}_i}_1
+\norm{\hat{P}_i}_1\norm{\bar{P}_i-\tilde{P}_i}_1}
\\
\label{eq:y.2}&\leq& \frac{8(2+\epsilon)\alpha\beta kp}{p-q}\sum_{i=1}^n\norm{\hat{P}_i}_1\norm{\bar{P}_i-\tilde{P}_i}_1 \\
\label{eq:y.3}&\leq& \frac{16(2+\epsilon)\alpha\beta kp}{p-q}\sum_{i=1}^n\norm{\hat{P}_i-P_i}_1 \\
\label{eq:bs1} &\leq& \frac{16(2+\epsilon)\alpha\beta nkp}{p-q}\fnorm{\hat{P}-P},
\end{eqnarray}
where (\ref{eq:y.1}) uses the definition of $S$, (\ref{eq:y.2}) is by the inequality (\ref{eq:by-def-km}), and (\ref{eq:y.3}) is by the inequality $\norm{\norm{x}_1^{-1}x-\norm{y}_1^{-1}y}_1\leq \frac{2\norm{x-y}_1}{\norm{x}_1\vee\norm{y}_1}$ which in turn is due to the triangle inequality.

Now we are ready to bound $\sum_{i\in S}\theta_i$ as
\begin{eqnarray}
\label{eq:g.1}\sum_{i\in S}\theta_i &\leq& \frac{2\beta k}{pn}\sum_{i\in S}\norm{P_i}_1 \\
\label{eq:g.11}
&\leq& \frac{2\beta k}{pn}\sum_{i\in S}\left(\norm{\hat{P}_i}_1+\norm{\hat{P}_i-P_i}_1\right) \\
\label{eq:g.2}
&\leq& 
\frac{2\beta k}{pn}\left(\frac{16(2+\epsilon)\alpha\beta nkp}{p-q}\fnorm{\hat{P}-P}+n\fnorm{\hat{P}-P}\right) \\
\label{eq:g.3}&\leq& \frac{(66+32\epsilon)\alpha\beta^2k^2}{p-q}\fnorm{\hat{P}-P},
\end{eqnarray}
where (\ref{eq:g.1}) is by the inequality (\ref{eq:p1t}), \eqref{eq:g.11} is due to the triangle inequality, (\ref{eq:g.2}) uses \eqref{eq:bs1} and Cauchy-Schwarz, and (\ref{eq:g.3}) holds since $\alpha, \beta, k \geq 1$.

We now turn to bounding $\sum_{i\in S_0}\theta_i$. To this end, simple algebra leads to
\begin{eqnarray}
\label{eq:s01}\sum_{i\in S_0}\theta_i &\leq& \frac{2\beta k}{pn}\sum_{i\in S_0}\norm{P_i}_1 \\
\label{eq:s02}&\leq& \frac{2\beta k}{pn}\sum_{i=1}^n\norm{\hat{P}_i-P_i}_1 \\
&\leq& \frac{2\beta k}{p}\fnorm{\hat{P}-P} 
\leq
\frac{\alpha\beta^2 k^2}{p-q}\fnorm{\hat{P}-P},
\label{eq:s03}
\end{eqnarray}
where (\ref{eq:s01}) is by the inequality (\ref{eq:p1t}), (\ref{eq:s02}) uses the definition of $S_0$ and \eqref{eq:s03} is due to the Cauchy-Schwarz inequality.


Combining the bounds in (\ref{eq:g.3}), (\ref{eq:s03}) and (\ref{eq:s+s0}), we have
\begin{equation}
\min_{\pi\in\Pi_k}\sum_{\{i:\hat{z}(i)\neq \pi(z(i))\}}\theta_i
\leq 
\frac{C(1+\epsilon)k^2}{p-q}\fnorm{\hat{P}-P},
\label{eq:alast}
\end{equation}
where we have absorbed $\alpha$ and $\beta$ into the constant $C$.
 By Lemma \ref{lem:P-hat-bound}, we have
$$\min_{\pi\in\Pi_k}\sum_{\{i:\hat{z}(i)\neq \pi(z(i))\}}\theta_i\leq C\frac{(1+\epsilon)k^{5/2}\sqrt{n\norm{\theta}_{\infty}^2p+1}}{p-q},$$
with probability at least $1-n^{-(1+C')}$. 
This completes the proof.
\end{proof}

\begin{proof}[Proof of Corollary \ref{cor:ini}]
Under the condition $\min_i\theta_i=\Omega(1)$, the loss $\min_{\pi\in\Pi_k}\sum_{\{i:\hat{z}(i)\neq \pi(z(i))\}}\theta_i$ can be lower bounded by $n\ell(\hat{z},z)$ multiplied by a constant. Moreover, since $p\geq n^{-1}$, $k=O(1)$ and $\norm{\theta}_{\infty}=O(1)$, the rate $\frac{k^{5/2}\sqrt{n\norm{\theta}_{\infty}^2p+1}}{p-q}$ is bounded by $O\left(\frac{\sqrt{np}}{p-q}\right)=O\left(\sqrt{n}|\sqrt{p}-\sqrt{q}|^{-1}\right)$. Thus, it is sufficient to show $n^{-1/2}|\sqrt{p}-\sqrt{q}|^{-1}=O(I^{-1/2})$. This is true by observing that
$$e^{-I}\geq\frac{1}{n}\sum_{i=1}^n\exp\left(-\theta_i\frac{n}{k}(\sqrt{p}-\sqrt{q})^2\right)\geq \exp\left(-\norm{\theta}_{\infty}\frac{n}{k}(\sqrt{p}-\sqrt{q})^2\right).$$
Thus, the proof is complete.
\end{proof}

\subsection{Proofs of Theorem \ref{thm:eq-size-I}, Theorem \ref{thm:algo-J} and Corollary \ref{coro:I-J}}

Now we are going to give proofs of Theorem \ref{thm:eq-size-I}, Theorem \ref{thm:algo-J} and Corollary \ref{coro:I-J}. Note that both Theorem \ref{thm:eq-size-I} and Corollary \ref{coro:I-J} are direct consequences of Theorem \ref{thm:algo-J}. The main argument in the proof of Theorem \ref{thm:algo-J} is the following lemma.

\begin{lemma}
Suppose $1<p/q=O(1)$ and $\delta=o\left(\frac{p-q}{p}\right)$. If there exist two sequences $\gamma_1=o\left(\frac{p-q}{kp}\right)$ and $\gamma_2=o\left(\frac{p-q}{k^2p}\right)$, a constant $C_1>0$ and permutations $\{\pi_i\}_{i\in[n]}\subset \Pi_k$ such that
\begin{equation}
\min_{i\in[n]}\mathbb{P}\left(\frac{1}{n}\sum_{j=1}^n\theta_j\indc{\hat{z}_{-i}^0(j)\neq \pi_i(z(j))}\leq\gamma_1, \frac{1}{n}\sum_{j=1}^n\indc{\hat{z}_{-i}^0(j)\neq \pi_i(z(j))}\leq\gamma_2\right)\geq 1-n^{-(1+C_1)}, \label{eq:ini-zan}
\end{equation}
uniformly for all probability distributions in $\mathcal{P}_n'(\theta,p,q,k,\beta;\delta,\alpha)$.
Then, we have for all $i\in[n]$,
$$\mathbb{P}\left(\hat{z}_{-i}^0(i)\neq \pi_i(z(i))\right)\leq (k-1)\exp\left(-(1-\eta)\theta_i(n_{(1)}+n_{(2)})J_{t^*}(p,q)/2\right)+n^{-(1+C_1)}$$
uniformly for all probability distributions in $\mathcal{P}_n'(\theta,p,q,k,\beta;\delta,\alpha)$, where $\eta=o(1)$.
\end{lemma}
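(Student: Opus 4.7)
The plan is to decompose $\p\pth{\hat z^0_{-i}(i)\neq u^*}$, where $u^*:=\pi_i(z(i))$, by a union bound over the $k-1$ alternative labels $v\neq u^*$ and to reduce each pairwise comparison to a Chernoff argument in the spirit of \prettyref{lem:t-upper}. A key initial simplification is that $\hat z^0_{-i}$ is a function of $A_{-i}$, which is independent of the row $\{A_{ij}\}_{j\neq i}$ driving Step~2 of \prettyref{alg:prove_refine}; conditioning on $\hat z^0_{-i}$ therefore turns the voting statistic into a sum of independent Bernoullis with deterministic community labels. Absorbing the complement of the good event in \eqref{eq:ini-zan} into the additive $n^{-(1+C_1)}$ term, it remains to show that, uniformly on that event and for each $v\neq u^*$,
\[
\p\pth{\,\hat n_v^{-1}\sum_{j\in S_v}A_{ij}\,\geq\,\hat n_{u^*}^{-1}\sum_{j\in S_{u^*}}A_{ij}\,\big|\,\hat z^0_{-i}\,}\leq \exp\pth{-(1-\eta)\theta_i\tfrac{n_{(1)}+n_{(2)}}{2}J_{t^*}(p,q)},
\]
where $S_u=\{j:\hat z^0_{-i}(j)=u\}$ and $\hat n_u=|S_u|$.

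In the idealized case where $S_u$ is replaced by $S^*_u:=\{j:\pi_i(z(j))=u\}$, the statistic in the display is a sum of independent Bernoullis with parameters $\theta_i\theta_j p$ on the $S^*_{u^*}$-side and $\theta_i\theta_j q$ on the $S^*_v$-side, so a direct Chernoff bound applies. Optimizing the tilt produces, after use of the approximate normalization $|S^*_u|^{-1}\sum_{j\in S^*_u}\theta_j=1+o(\delta)$ from \eqref{eq:DCBM-space}, a rate of the form $\theta_i(n_{u^*}+n_v)J_{t_{uv}}(p,q)/2$ with $t_{uv}=n_{u^*}/(n_{u^*}+n_v)$ and $J_t$ as in \eqref{eq:Jtpq}. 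Minimizing over $v\neq u^*$ then amounts to minimizing $(m+m')J_{m/(m+m')}(p,q)$ over unordered pairs of community sizes, and by the monotonicity properties of $J_t$ recorded in Appendix~\ref{sec:property} the minimum is attained at the two smallest sizes $n_{(1)},n_{(2)}$ with tilt $t^*$, which reproduces the target exponent.

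To transfer this idealized bound to the statistic actually computed by the algorithm, I would use the two controls in \eqref{eq:ini-zan}: $|S_u\triangle S^*_u|\le n\gamma_2$ in cardinality and $\sum_{j\in S_u\triangle S^*_u}\theta_j\le n\gamma_1$ in weight. Revisiting the Chernoff calculation with the split $S_u=(S_u\cap S^*_u)\cup(S_u\setminus S^*_u)$, both the tilted mean and the denominator $\hat n_u$ are perturbed by a multiplicative factor $1+O\pth{kp\gamma_1/(p-q)}+O\pth{k^2 p\gamma_2/(p-q)}$; the assumptions $\gamma_1=o\pth{(p-q)/(kp)}$ and $\gamma_2=o\pth{(p-q)/(k^2p)}$ are tailored precisely to make this factor $1+o(1)$, so it can be absorbed into the multiplicative $(1-\eta)$ in the exponent.

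The main obstacle will be making this last perturbation step quantitative. A naive bound on the tilted mean would produce an additive shift of order $\theta_i p\gamma_1 n$ in the expected voting statistic, comparable to the target signal $\theta_i n_{(1)}(p-q)$ and sufficient on its own to destroy the rate. Keeping the perturbation as a purely multiplicative $1+o(1)$ correction on the exponent requires treating the bulk $S_u\cap S^*_u$ and the discrepancy $S_u\triangle S^*_u$ separately in the Chernoff computation and matching each piece to its appropriate control; this is precisely what explains why the hypothesis bundles both a weighted $\gamma_1$-bound and an unweighted $\gamma_2$-bound on the initializer.
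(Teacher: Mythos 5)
Your proposal is correct and follows essentially the same route as the paper: a union bound over the $k-1$ alternative labels, a Chernoff bound on each pairwise comparison statistic (exploiting that $\hat z^0_{-i}$ is $A_{-i}$-measurable and hence independent of row $i$), a split of the voting sets into the bulk $S_u\cap S^*_u$ and the discrepancy $S_u\triangle S^*_u$ to keep the perturbations from the initializer error multiplicative in the exponent, and finally the monotonicity/asymmetry lemmas for $J_t$ to collapse the pairwise minimum onto $(n_{(1)}+n_{(2)})J_{t^*}$. The paper organizes the perturbation analysis a bit differently — after a stochastic domination step it writes the Chernoff expectation explicitly as a product of one main term and three perturbation factors, bounding the latter on the good event using $\gamma_1$, $\gamma_2$, and $\delta$ — but this is exactly the bulk/discrepancy accounting you sketch, and your identification of why both the $\theta$-weighted $\gamma_1$-control and the cardinality $\gamma_2$-control are needed (means of the Bernoullis vs. the normalizers $\hat n_u$ and the tilt $\hat t$) is precisely the point. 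One minor caveat: under the stated hypotheses $\gamma_1=o\big(\tfrac{p-q}{kp}\big)$, the mean shift $\theta_i p\gamma_1 n$ is already $o$ of the signal $\theta_i n_{(1)}(p-q)$ rather than ``comparable'' to it, so a careful direct bound does not on its own destroy the rate; the real reason the split is needed is to keep the correction multiplicative in the full exponent $\theta_i n_{(1)}(p-q)^2/p$, which is what the condition is calibrated for.
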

\begin{proof}
In what follows, let $E_i$ denote the event in (\ref{eq:ini-zan}). We are going to derive a bound for $\mathbb{P}\left(\hat{z}_{-1}^0(1)\neq \pi_1(z(1))\text{ and }E_1\right)$. For the sake of brevity, we are going to use $\hat{z}$ and $z$ to denote $\hat{z}_{-1}^0$ and $\pi_1(z)$ in the proof with slight abuse of notaion.
Define $n_u=|\{i\in[n]:z(i)=u\}|$, $m_u=|\{i\in[n]:\hat{z}(i)=u\}|$ and $\hat{\Theta}_u=\sum_{\{i:\hat{z}(i)=u,z(i)=u\}}\theta_i$. Without loss of generality, consider the case $z(i)=1$. Then,
$$\mathbb{P}(\hat{z}(1)\neq 1\text{ and }E_1)\leq \sum_{l=2}^k\mathbb{P}(\hat{z}(1)= l\text{ and }E_1).$$
The arguments for bounding $\mathbb{P}(\hat{z}(1)= l\text{ and }E_1)$ are the same for $l=2,...,k$. Thus, we only give the bound for $l=2$ in details. By the definition, we have
\begin{equation}
\mathbb{P}(\hat{z}(1)= 2\text{ and }E_1)\leq \mathbb{P}\left(\frac{1}{m_2}\sum_{\{i:\hat{z}(i)=2\}}A_{1i}\geq\frac{1}{m_1}\sum_{\{i:\hat{z}(i)=1\}}A_{1i}\text{ and }E_1\right).\label{eq:CR7}
\end{equation}
Define independent random variables $X_i\sim \text{Bernoulli}(\theta_1\theta_iq)$, $Y_i\sim\text{Bernoulli}(\theta_1\theta_ip)$, and $Z_i\sim \text{Bernoulli}(\theta_1\theta_ip)$ for all $i\in[n]$. Then, a stochastic order argument bounds the right hand  side of (\ref{eq:CR7}) by
\begin{equation}
\mathbb{P}\left(\frac{1}{m_2}\sum_{\{i:\hat{z}(i)=2,z(i)=2\}}X_i+\frac{1}{m_2}\sum_{\{i:\hat{z}(i)=2,z(i)=1\}}Z_i\geq\frac{1}{m_1}\sum_{\{i:\hat{z}(i)=1,z(i)=1\}}Y_i\text{ and }E_1\right). \label{eq:M10}
\end{equation}
Using Chernoff bound, for any $\lambda>0$, we upper bound (\ref{eq:M10}) by
\begin{eqnarray}
\nonumber && \mathbb{E}\left\{ \prod_{\{i:\hat{z}(i)=2,z(i)=2\}}(\theta_1\theta_iqe^{\lambda/m_2}+1-\theta_1\theta_iq)\prod_{\{i:\hat{z}(i)=2,z(i)=1\}}(\theta_1\theta_i\alpha pe^{\lambda/m_2}+1-\theta_1\theta_i\alpha p) \right.\\
\nonumber && \left.\prod_{\{i:\hat{z}(i)=1,z(i)=1\}}(\theta_1\theta_ipe^{-\lambda/m_1}+1-\theta_1\theta_ip)\indc{E_1}\right\} \\
\nonumber &\leq& \mathbb{E}\left\{\exp\left(\sum_{\{i:\hat{z}(i)=2,z(i)=2\}}(\theta_1\theta_iqe^{\lambda/m_2}-\theta_1\theta_iq)+\sum_{\{i:\hat{z}(i)=2,z(i)=1\}}(\theta_1\theta_i\alpha pe^{\lambda/m_2}-\theta_1\theta_i\alpha p)\right)\right. \\
\nonumber && \left. \exp\left(\sum_{\{i:\hat{z}(i)=1,z(i)=1\}}(\theta_1\theta_ipe^{-\lambda/m_1}-\theta_1\theta_ip)\right) \indc{E_1}\right\} \\
\label{eq:an1} &=& \mathbb{E}\left\{ \exp\left(\theta_1m_2q(e^{\lambda/m_2}-1)+\theta_1m_1p(e^{-\lambda/m_1}-1)\right)  \indc{E_1}\right\} \\
\label{eq:an2} && \times \mathbb{E}\left\{ \exp\left((\hat{\Theta}_2-m_2)\theta_1q(e^{\lambda/m_2}-1\right)  \indc{E_1}\right\} \\
\label{eq:an3} && \times \mathbb{E}\left\{  \exp\left((\hat{\Theta}_1-m_1)\theta_1p(e^{-\lambda/m_1}-1)\right) \indc{E_1}\right\} \\
\label{eq:an4} && \times \mathbb{E}\left\{  \exp\left(\sum_{\{i:\hat{z}(i)=2,z(i)=1\}}\theta_1\theta_i\alpha p(e^{\lambda/m_2}-1)\right) \indc{E_1}\right\}.
\end{eqnarray}
In what follows, we set
$$\lambda=\frac{m_1m_2}{m_1+m_2}\log\frac{p}{q},$$
We are going to give bounds for the four terms (\ref{eq:an1}), (\ref{eq:an2}), (\ref{eq:an3}) and (\ref{eq:an4}), respectively. On the event $E_1$,
$$|\hat{\Theta}_2-m_2|\leq\left|\sum_{\{i:z(i)=2\}}\theta_i-n_2\right|+ |n_2-m_2|+\sum_{\{i:z(i)=2,\hat{z}(i)=1\}}\theta_i\leq \left(\gamma_1+\gamma_2+\frac{\delta\beta}{k}\right)n,$$
and
$$q|e^{\lambda/m_2}-1|= p^{\frac{m_1}{m_1+m_2}}q^{\frac{m_2}{m_1+m_2}}-q\leq p-q,$$
we have
$$\mathbb{E}\left\{ \exp\left((\hat{\Theta}_2-m_2)\theta_1q(e^{\lambda/m_2}-1\right)  \indc{E_1}\right\}\leq \exp\left(n\left(\gamma_1+\gamma_2+\frac{\delta\beta}{k}\right)\theta_1(p-q)\right),$$
which is a bound for (\ref{eq:an2}). A similar argument leads to a bound (\ref{eq:an3}), which is
$$\mathbb{E}\left\{  \exp\left((\hat{\Theta}_1-m_1)\theta_1p(e^{-\lambda/m_1}-1)\right) \indc{E_1}\right\}\leq\exp\left(n\left(\gamma_1+\gamma_2+\frac{\delta\beta}{k}\right)\theta_1(p-q)\right).$$
The last term (\ref{eq:an4}) has a bound
$$\mathbb{E}\left\{  \exp\left(\sum_{\{i:\hat{z}(i)=2,z(i)=1\}}\theta_1\theta_i\alpha p(e^{\lambda/m_2}-1)\right) \indc{E_1}\right\}\leq \exp\left(n\gamma_1\alpha\theta_1(p-q)\right).$$
Finally, we need a bound for (\ref{eq:an1}). With the current choice of $\lambda$,
$$-m_2q(e^{\lambda/m_2}-1)-m_1p(e^{-\lambda/m_1}-1)=\frac{1}{2}(m_1+m_2)J_{\frac{m_1}{m_1+m_2}}(p,q).$$
Note that
\begin{eqnarray*}
&& \left|(m_1+m_2)J_{\frac{m_1}{m_1+m_2}}(p,q)-(n_1+n_2)J_{\frac{n_1}{n_1+n_2}}(p,q)\right| \\
&\leq& |n_1+n_2-m_1-m_2|J_{\frac{n_1}{n_1+n_2}}(p,q) + (m_1+m_2)\left|J_{\frac{m_1}{m_1+m_2}}(p,q)-J_{\frac{n_1}{n_1+n_2}}(p,q)\right| \\
&\leq& n\gamma_2J_{\tau}(p,q)+n(1+\gamma_2)\left|J_{\tau}(p,q)-J_{\hat{\tau}}(p,q)\right|,
\end{eqnarray*}
where $\tau=\frac{n_1}{n_1+n_2}$ and $\hat{\tau}=\frac{m_1}{m_1+m_2}$.
We will give a bound for $\left|J_{\tau}(p,q)-J_{\hat{\tau}}(p,q)\right|$. Since $\left|\frac{\partial}{\partial \tau}J_{\tau}(p,q)\right|=\frac{1}{2}\left|(p-q)-p^tq^{1-t}\log\frac{p}{q}\right|\leq \frac{1}{2}|p-q|+\frac{1}{2}p|\log p-\log q|\leq |p-q|$, we have $\left|J_{\tau}(p,q)-J_{\hat{\tau}}(p,q)\right|\leq |p-q||\tau-\hat{\tau}|\leq \beta k\gamma_2(p-q)$. Hence, we have a bound for (\ref{eq:an1}), which is
\begin{eqnarray*}
&& \mathbb{E}\left\{ \exp\left(\theta_1m_2q(e^{\lambda/m_2}-1)+\theta_1m_1p(e^{-\lambda/m_1}-1)\right)  \indc{E_1}\right\} \\
&\leq& \exp\left(-\frac{1}{2}\theta_1(n_1+n_2)J_{\frac{n_1}{n_1+n_2}}(p,q)+\frac{1}{2}\theta_1n\gamma_2J_{\frac{n_1}{n_1+n_2}}(p,q)+\frac{1}{2}\theta_1n(1+\gamma_2)k\beta\gamma_2(p-q)\right).
\end{eqnarray*}
Combining the above bounds for (\ref{eq:an1}), (\ref{eq:an2}), (\ref{eq:an3}) and (\ref{eq:an4}), we have
\begin{eqnarray*}
&& \mathbb{P}\left(\hat{z}(1)=2\text{ and }E_1\right)\\
 &\leq& \exp\left(-\frac{1}{2}\theta_1(n_1+n_2)J_{\frac{n_1}{n_1+n_2}}(p,q)\right) \\
&& \times \exp\left(-\frac{1}{2}\theta_1n\gamma_2J_{\frac{n_1}{n_1+n_2}}(p,q)+\left[(2+\alpha)\gamma_1+(2+k\beta)\gamma_2+\frac{2\delta\beta}{k}\right]\theta_1n(p-q)\right).
\end{eqnarray*}
By the property of $J_t(p,q)$ stated in Lemma \ref{lem:J_expand}, $J_{\frac{n_1}{n_1+n_2}}(p,q)\geq (4\beta^2)^{-1}\frac{(p-q)^2}{p}$. Then, under the assumptions $\gamma_1=o\left(\frac{p-q}{p}\right)$, $\gamma_2=o\left(\frac{p-q}{pk}\right)$ and $\delta=o\left(\frac{k(p-q)}{p}\right)$, we have
$$\mathbb{P}\left(\hat{z}(1)=2\text{ and }E_1\right)\leq \exp\left(-\frac{1}{2}(1-\eta)\theta_1(n_1+n_2)J_{\frac{n_1}{n_1+n_2}}(p,q)\right),$$
for some $\eta=o(1)$. The same bound also holds for $\mathbb{P}\left(\hat{z}(1)=l\text{ and }E_1\right)$ for $l=2,...,k$. Thus, a union bound argument gives
$$\mathbb{P}(\hat{z}(1)\neq 1\text{ and }E_1)\leq (k-1)\exp\left(-\frac{1}{2}(1-\eta)\theta_1(n_1+n_2)J_{\frac{n_1}{n_1+n_2}}(p,q)\right).$$
Hence,
$$\mathbb{P}(\hat{z}(1)\neq 1)\leq (k-1)\exp\left(-\frac{1}{2}(1-\eta)\theta_1(n_1+n_2)J_{\frac{n_1}{n_1+n_2}}(p,q)\right)+n^{-(1+C_1)}.$$
Now let us use the original notation and apply the above argument for each node, which leads to the bound
$$\mathbb{P}\left(\hat{z}_{-i}^0(i)\neq \pi_i(z(i))\right)\leq (k-1)\exp\left(-\frac{1}{2}(1-\eta)\theta_i\min_{u\neq v}\left[(n_u+n_v)J_{\frac{n_u}{n_u+n_v}}(p,q)\right]\right)+n^{-C_1},$$
for all $\in[n]$.
By the property of $J_t(p,q)$ stated in Lemma \ref{lem:f_increasing}, $\min_{u\neq v}\left[(n_u+n_v)J_{\frac{n_u}{n_u+n_v}}(p,q)\right]= (n_{(1)}+n_{(2)})J_{t^*}(p,q)$, with $t^*$ specified by (\ref{eq:J-def}). Thus, the proof is complete.
\end{proof}

\begin{proof}[Proof of \prettyref{thm:algo-J}]
It is sufficient to check that the initial clustering step satisfies (\ref{eq:ini-zan}) with $\gamma_1=o\left(\frac{p-q}{kp}\right)$ and $\gamma_2=o\left(\frac{p-q}{k^2p}\right)$. This can be done using the bound in Lemma \ref{lem:initial3} under the assumptions (\ref{eqn:algo-J1}) and (\ref{eqn:algo-J2}). 
Note that the $n$ initial clustering results $\{\hat{z}_{-i}^0\}$ may not correspond to the same permutation. This problem can be taken care of by the consensus step (\ref{eq:consensus}). Details of the argument are referred to the proof of Theorem 2 in \cite{gao2015achieving}.
\end{proof}

\begin{proof}[Proofs of Theorem \ref{thm:eq-size-I} and Corollary \ref{coro:I-J}]
Theorem \ref{thm:eq-size-I} is a direct implication of Theorem \ref{thm:algo-J} by observing $I=J$ when $\beta=1$. The fact that $(n_{(1)}+n_{(2)})J_{t^*}(p,q)\geq 2n_{(1)}J_{1/2}(p,q)\geq \frac{2n}{\beta k}(\sqrt{p}-\sqrt{q})^2$ by Lemma \ref{lem:zongming} implies the result for the case $k\geq 3$ in Corollary \ref{coro:I-J}. For $k=2$, observe that
$$\frac{1}{n}\sum_{i=1}^n\exp\left(-\theta_i\frac{n}{2\beta}(\sqrt{p}-\sqrt{q})^2\right)\leq \left[\frac{1}{n}\sum_{i=1}^n\exp\left(-\theta_i\frac{n}{2}(\sqrt{p}-\sqrt{q})^2\right)\right]^{\beta}.$$
This implies the result for $k=2$ in  Corollary \ref{coro:I-J}.
\end{proof}

\section{Properties of {$J_t(p,q)$}}\label{sec:property}

In this section, we study the quantity $J_t(p,q)$ defined in (\ref{eq:Jtpq}). We will state some lemmas about some useful properties of $J_t(p,q)$ that we have used in the paper. Recall that for $p,q,t\in(0,1)$,
$$J_t(p,q)=2(tp+(1-t)q-p^tq^{1-t}).$$

\begin{lemma}\label{lem:f_increasing}
Given $p,q\in(0,1)$, let $f(x_1,x_2)=x_1p+x_2q-(x_1+x_2)p^\frac{x_1}{x_1+x_2}q^\frac{x_2}{x_1+x_2}$ where $x_1, x_2> 0$. Then the function $f$ is increasing in terms of $x_1$ and $x_2$, respectively.
\end{lemma}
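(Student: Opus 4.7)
The plan is to show monotonicity by direct differentiation. By the symmetry of the statement (interchanging the roles of $x_1,p$ with $x_2,q$), it suffices to verify that $\partial f/\partial x_1 \geq 0$ everywhere on the domain. Setting $t = x_1/(x_1+x_2)$, so that $\partial t/\partial x_1 = (1-t)/(x_1+x_2)$, I will compute
\begin{equation*}
\frac{\partial f}{\partial x_1} = p - p^t q^{1-t} - (x_1+x_2)\, p^t q^{1-t}\log(p/q)\cdot \frac{1-t}{x_1+x_2} = p - p^t q^{1-t}\bigl[1 + (1-t)\log(p/q)\bigr].
\end{equation*}

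Dividing through by the positive quantity $p^t q^{1-t}$ and using $p/(p^t q^{1-t}) = (p/q)^{1-t}$, the inequality $\partial f/\partial x_1 \geq 0$ is equivalent to
\begin{equation*}
(p/q)^{1-t} \geq 1 + (1-t)\log(p/q).
\end{equation*}
Setting $x = (1-t)\log(p/q) \in \mathbb{R}$, this is nothing but the elementary bound $e^x \geq 1+x$, which holds for all real $x$. Hence $\partial f/\partial x_1 \geq 0$ on the entire domain, so $f$ is nondecreasing in $x_1$. The analogous argument with the roles of $(x_1,p)$ and $(x_2,q)$ swapped yields monotonicity in $x_2$.

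There is no real obstacle here: the only subtlety is bookkeeping in the chain-rule computation of $\partial t/\partial x_1$ and recognizing that after clearing the common factor $p^t q^{1-t}$ the inequality reduces to the ubiquitous $e^x \geq 1+x$. Note also that the argument works for any $p,q>0$, not just $p,q\in(0,1)$, and that equality holds throughout precisely when $p=q$.
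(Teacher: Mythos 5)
Your proof is correct. You and the paper compute exactly the same first partial derivative
\begin{equation*}
\frac{\partial f}{\partial x_1} = p - p^t q^{1-t}\bigl[1 + (1-t)\log(p/q)\bigr],\qquad t = \frac{x_1}{x_1+x_2},
\end{equation*}
but you finish differently. The paper establishes nonnegativity indirectly: it shows $\partial^2 f/\partial x_1^2 \leq 0$, so that $\partial f/\partial x_1$ is nonincreasing in $x_1$, and then observes $\lim_{x_1\to\infty}\partial f/\partial x_1 = 0$, forcing the derivative to be nonnegative on the whole domain. You instead divide by the positive factor $p^t q^{1-t}$ and recognize the inequality as the elementary bound $e^x \geq 1+x$ with $x = (1-t)\log(p/q)$, which settles nonnegativity in one line without a second derivative or a limit. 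Your route is shorter and arguably more transparent, and your closing remark is right that it requires only $p,q>0$ (equality iff $p=q$). Your use of the $(x_1,p)\leftrightarrow(x_2,q)$ symmetry to cover monotonicity in $x_2$ is also clean; the paper simply says the other case is ``similar.''
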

\begin{proof}
By differentiating $f$ against $x_1$ we get
\begin{align*}
\frac{\partial f(x_1,x_2)}{\partial x_1}&=p-q\left(\frac{p}{q}\right)^\frac{x_1}{x_1+x_2}-q\left(\frac{p}{q}\right)^\frac{x_1}{x_1+x_2}\log \left(\frac{p}{q}\right)\frac{x_2}{x_1+x_2}.
\end{align*}
Thus $\lim_{x_1\rightarrow\infty}\frac{\partial f(x_1,x_2)}{\partial x_1}=0$.
Moreover,
\begin{align*}
\frac{\partial^2 f(x_1,x_2)}{\partial x_1^2}=-q\left(\frac{p}{q}\right)^\frac{x_1}{x_1+x_2}\log^2\left(\frac{p}{q}\right)\frac{x_2^2}{(x_1+x_2)^3}\leq 0,
\end{align*}
Therefore, $\frac{\partial f(x_1,x_2)}{\partial x_1}\geq 0$ for all $x_1,x_2> 0$. This shows $f(x_1,x_2)$ is increasing with respect to $x_1$. Similarly we can prove that $f(x_1,x_2)$ is also an increasing function in terms of $x_2$.
\end{proof}

\begin{lemma}\label{lem:t_asymmetric}
For any $0<q<p<1$ and $0<t\leq\frac{1}{2}$, we have
$$
J_t(p,q)\leq J_{1-t}(p,q).
$$
\end{lemma}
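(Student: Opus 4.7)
The plan is to reduce the inequality $J_t(p,q) \leq J_{1-t}(p,q)$ to a one-variable convexity statement about $\sinh$. First, using the definition $J_t(p,q)=2(tp+(1-t)q-p^tq^{1-t})$, a direct subtraction gives
$$J_{1-t}(p,q)-J_t(p,q) = 2\bigl[(1-2t)(p-q)-(p^{1-t}q^t-p^tq^{1-t})\bigr],$$
so it suffices to show $(1-2t)(p-q)\geq p^{1-t}q^t-p^tq^{1-t}$ for $t\in(0,1/2]$.

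Next, I will make the logarithmic substitution $\mu = (\log p + \log q)/2$, $d = \log p - \log q > 0$, and $s = 1-2t \in [0,1)$. Observing that $tx+(1-t)y = \mu - sd/2$ and $(1-t)x+ty = \mu + sd/2$ when $(x,y)=(\log p, \log q)$, both sides acquire a clean hyperbolic-sine form:
$$(1-2t)(p-q) = 2s\,e^\mu \sinh(d/2), \qquad p^{1-t}q^t - p^tq^{1-t} = 2e^\mu \sinh(sd/2).$$
After cancelling the positive factor $2e^\mu$, the claim reduces to $s\sinh(d/2) \geq \sinh(sd/2)$, i.e., writing $u=d/2 > 0$, to the inequality $\sinh(su) \leq s\sinh(u)$ for $s \in [0,1]$.

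Finally, this last inequality follows immediately from the convexity of $\sinh$ on $[0,\infty)$ (indeed $(\sinh)'' = \sinh \geq 0$ on $[0,\infty)$), combined with $\sinh(0)=0$:
$$\sinh(su) = \sinh\bigl(s\cdot u + (1-s)\cdot 0\bigr) \leq s\sinh(u)+(1-s)\sinh(0) = s\sinh(u).$$
This completes the reduction and hence the proof of the lemma. The only nontrivial step is recognizing the correct substitution; once it is seen that both quantities are of the form $2e^\mu \sinh(\cdot)$, the rest is a one-line application of Jensen's inequality for $\sinh$, so I do not anticipate any serious obstacle.
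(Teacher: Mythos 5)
Your proof is correct. The key computations check out: with $\mu=(\log p+\log q)/2$ and $d=\log p-\log q>0$ one indeed has $p-q=2e^\mu\sinh(d/2)$ and $p^{1-t}q^t-p^tq^{1-t}=2e^\mu\sinh(sd/2)$ with $s=1-2t\in[0,1)$, and the concluding step $\sinh(su)\leq s\sinh(u)$ is a standard consequence of the convexity of $\sinh$ on $[0,\infty)$ together with $\sinh(0)=0$. Your route differs from the paper's: the paper sets $S(t)=\tfrac12\bigl(J_t(p,q)-J_{1-t}(p,q)\bigr)$, computes $S''(t)=-\log^2(p/q)\,(p^tq^{1-t}-p^{1-t}q^t)$, observes $S''\geq 0$ on $(0,1/2]$, and then uses the boundary values $S(0)=S(1/2)=0$ to conclude $S\leq 0$. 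So both arguments ultimately exploit convexity, but yours reparametrizes the problem so that the convexity is that of a single fixed function ($\sinh$) and the conclusion is a one-line Jensen inequality, whereas the paper works directly with the $t$-dependent difference $S(t)$ and its second derivative. Your version is a bit more transparent and self-contained; the paper's is more mechanical but generalizes trivially to other ranges of $t$ without re-deriving a substitution.
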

\begin{proof}
Define $S(t)=\frac{1}{2}\left(J_t(p,q)-J_{1-t}(p,q)\right)=(2t-1)(p-q)-\left(q(\frac{p}{q})^t-p(\frac{q}{qp})^t\right)$. Then, we have
$$
S''(t)=-\log^2\left(\frac{q}{p}\right)\left(p^{t}q^{1-t}-p^{1-t}q^{t}\right)\geq 0.
$$
Since $S(0)=S(1/2)=0$, we have $S(t)\leq 0$ for all $t\in (0,1/2]$.
\end{proof}

\begin{lemma}\label{lem:zongming}
For any $0<q<p<1$ and $0<x_1\leq x_2$, we have
$$2x_1J_{1/2}(p,q)\leq (x_1+x_2)J_{\frac{x_1}{x_1+x_2}}(p,q)\leq (x_1+x_2)J_{1/2}(p,q).$$
\end{lemma}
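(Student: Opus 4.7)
The plan is to exploit concavity of $t \mapsto J_t(p,q)$ together with the fact that it vanishes at the endpoints $t=0$ and $t=1$. Writing $J_t(p,q)/2 = (tp + (1-t)q) - p^t q^{1-t}$, the first term is linear in $t$ and $p^t q^{1-t} = q(p/q)^t$ is strictly convex in $t$ (second derivative $q(p/q)^t \log^2(p/q) \ge 0$), so $J_t(p,q)$ is concave in $t$ on $[0,1]$. Moreover a direct computation gives $J_0(p,q) = J_1(p,q) = 0$.

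For the upper bound $(x_1+x_2) J_{x_1/(x_1+x_2)}(p,q) \le (x_1+x_2) J_{1/2}(p,q)$, since $t = x_1/(x_1+x_2) \le 1/2$, it suffices to show that the maximizer $t^*$ of $J_t(p,q)$ on $[0,1]$ satisfies $t^* \ge 1/2$, so that $J_t$ is nondecreasing on $[0,1/2]$. Setting $\frac{d}{dt}J_t(p,q) = 0$ gives $p^{t^*} q^{1-t^*} = (p-q)/\log(p/q)$, the logarithmic mean of $p$ and $q$. Since the logarithmic mean dominates the geometric mean, the right-hand side is at least $\sqrt{pq} = p^{1/2}q^{1/2}$, and since $t \mapsto p^t q^{1-t}$ is strictly increasing when $p > q$, this forces $t^* \ge 1/2$, as desired.

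For the lower bound $2 x_1 J_{1/2}(p,q) \le (x_1+x_2) J_{x_1/(x_1+x_2)}(p,q)$, rewrite it as $J_t(p,q) \ge 2t J_{1/2}(p,q)$ where $t = x_1/(x_1+x_2) \in (0,1/2]$. Here I invoke the elementary fact that if $f$ is concave on $[0,1]$ with $f(0) = 0$, then $f(t)/t$ is nonincreasing on $(0,1]$; indeed, for $0 < s \le t$, writing $s = (s/t)\,t + (1-s/t)\,0$ and applying concavity gives $f(s) \ge (s/t) f(t)$. Applied to $f(t) = J_t(p,q)$, this yields $J_t(p,q)/t \ge J_{1/2}(p,q)/(1/2) = 2 J_{1/2}(p,q)$ for all $t \in (0,1/2]$, which is exactly the claim after multiplying by $t(x_1+x_2) = x_1$.

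Neither step poses a real obstacle: concavity and the endpoint vanishing are immediate, and the only mildly delicate point is the logarithmic-mean vs.\ geometric-mean inequality used to locate $t^* \ge 1/2$, which is a standard fact that can be proved in one line (e.g., by Jensen's inequality applied to $x \mapsto e^x$).
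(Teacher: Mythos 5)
Your proof is correct, and it takes a genuinely different route from the paper's. For the lower bound, the paper cites Lemma~\ref{lem:f_increasing} (monotonicity of $f(x_1,x_2)=x_1p+x_2q-(x_1+x_2)p^{x_1/(x_1+x_2)}q^{x_2/(x_1+x_2)}$ in each argument, established by a separate derivative computation) and observes that the left side is $2f(x_1,x_1)$ and the middle is $2f(x_1,x_2)$; you instead use the elementary ``secant slope'' fact that a concave $f$ with $f(0)=0$ has $f(t)/t$ nonincreasing, which compresses that monotonicity lemma into one line of Jensen. For the upper bound, the paper combines concavity (giving $\frac{1}{2}(J_t+J_{1-t})\le J_{1/2}$) with Lemma~\ref{lem:t_asymmetric} ($J_t\le J_{1-t}$ for $t\le 1/2$, proved via a second-derivative argument on $S(t)=\frac{1}{2}(J_t-J_{1-t})$); you instead locate the maximizer $t^*$ directly by solving the first-order condition and invoking the logarithmic-mean versus geometric-mean inequality to conclude $t^*\ge 1/2$. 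Your approach is more self-contained in the sense that it avoids the two auxiliary lemmas, at the cost of importing the log-mean inequality; the paper's route has the advantage that Lemmas~\ref{lem:f_increasing} and~\ref{lem:t_asymmetric} are reused elsewhere in the appendix, so the work is amortized. Both arguments are complete and correct.
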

\begin{proof}
The first inequality $2x_1J_{1/2}(p,q)\leq (x_1+x_2)J_{\frac{x_1}{x_1+x_2}}(p,q)$ is a consequence of Lemma \ref{lem:f_increasing} and $x_1\leq x_2$. Since $\left(\frac{\partial}{\partial t}\right)^2J_t(p,q)=-2p^tq^{1-t}\log^2\left(\frac{p}{q}\right)\leq 0$, $J_t(p,q)$ is concave in $t$. Thus,
\begin{equation}
\frac{1}{2}\left(J_t(p,q)+J_{1-t}(p,q)\right)\leq J_{1/2}(p,q).\label{eq:conc-J}
\end{equation}
When $t\in (0,1/2]$, $J_t(p,q)\leq \frac{1}{2}\left(J_t(p,q)+J_{1-t}(p,q)\right)$ according to Lemma \ref{lem:t_asymmetric}. Thus, $J_t(p,q)\leq J_{1/2}(p,q)$, which leads to the second inequality $(x_1+x_2)J_{\frac{x_1}{x_1+x_2}}(p,q)\leq (x_1+x_2)J_{1/2}(p,q)$ by the assumption $x_1\leq x_2$.
\end{proof}

\begin{lemma}\label{lem:J_expand}
For any $0<p,q,t<1$, we have
\begin{equation}
2\min(t,1-t)(\sqrt{p}-\sqrt{q})^2\leq J_t(p,q)\leq 2(\sqrt{p}-\sqrt{q})^2.\label{eq:tiamat}
\end{equation}
Moreover, if $\max(p/q,q/p)\leq M$, then we have
\begin{equation}
J_t(p,q)\leq \left(2+\frac{4M^4}{3}\right)\min(t,1-t)\frac{(p-q)^2}{\min(p,q)}.\label{eq:therion}
\end{equation}
\end{lemma}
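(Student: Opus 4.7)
The plan is to derive both displays from Lemma~\ref{lem:zongming} (already established earlier in this appendix) together with a simple Taylor expansion argument.

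First, I would note the identity $J_t(p,q)=J_{1-t}(q,p)$, which follows directly from the definition \eqref{eq:Jtpq}. Since $J_{1/2}(p,q)=(\sqrt{p}-\sqrt{q})^2$ is symmetric in $(p,q)$, the inequality to be proved in \eqref{eq:tiamat} is invariant under the transformation $(t,p,q)\mapsto(1-t,q,p)$, so it suffices to handle $t\in(0,1/2]$. For such $t$, I would apply Lemma~\ref{lem:zongming} with $x_1=t$ and $x_2=1-t$, which satisfy $x_1\leq x_2$ and $x_1/(x_1+x_2)=t$. The lemma then immediately gives $2t\,J_{1/2}(p,q)\leq J_t(p,q)\leq J_{1/2}(p,q)$, from which \eqref{eq:tiamat} follows upon substituting $J_{1/2}(p,q)=(\sqrt{p}-\sqrt{q})^2$ (the constant $2$ on the right-hand side of \eqref{eq:tiamat} is loose).

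For \eqref{eq:therion}, the plan is to parameterize by $x=p/q-1$, so that the assumption $\max(p/q,q/p)\leq M$ forces $1+x\in[1/M,M]$. A direct algebraic manipulation of \eqref{eq:Jtpq} gives $J_t(p,q)=2q[\,1+tx-(1+x)^t\,]$. Taylor's theorem with integral remainder applied to $r\mapsto r^t$ at $r=1$ yields
\[
1+tx-(1+x)^t \;=\; t(1-t)\int_0^{x}(x-s)(1+s)^{t-2}\diff s.
\]
Since $1+s\in[1/M,M]$ for every $s$ between $0$ and $x$, I would bound $(1+s)^{t-2}\leq M^{2-t}\leq M^2$ pointwise and estimate the (signed) integral in absolute value by $M^2x^2/2$. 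This gives $J_t(p,q)\leq t(1-t)M^2(p-q)^2/q$. Combining with $t(1-t)\leq\min(t,1-t)$, $1/q\leq 1/\min(p,q)$, and the numerical inequality $M^2\leq 2+4M^4/3$ valid for all $M\geq 1$ then produces \eqref{eq:therion}.

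The main (minor) obstacle is purely cosmetic: matching the explicit constant $2+4M^4/3$ stated in the lemma. The Taylor-remainder argument above already yields the sharper constant $M^2$; in fact, if one instead bounds $J_t$ via the one-sided derivative at $t=0$ using the elementary inequality $r-1-\log r\leq (r-1)^2/2$ for $r\geq 1$, one can even obtain the constant $1$ in the regime $p\geq q$, and the case $p<q$ is handled by the same symmetry $(t,p,q)\mapsto(1-t,q,p)$. The only technical subtlety to track is the sign of $x$ when $p<q$, but this is handled uniformly in the Taylor expansion by the two-sided bound $1+s\geq 1/M$ valid for both signs of $x$ under the hypothesis on $p/q$.
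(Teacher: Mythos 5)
Your Taylor-expansion argument for \eqref{eq:therion} is correct and in fact gives the sharper constant $M^2$; it is arguably cleaner than the paper's route, which splits $J_t/(2t(1-t))$ into two pieces and expands each separately.

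However, there is a genuine gap in your treatment of \eqref{eq:tiamat}. Lemma~\ref{lem:zongming} is stated under the hypothesis $0<q<p<1$. Your symmetry $(t,p,q)\mapsto(1-t,q,p)$ lets you assume $t\in(0,1/2]$ \emph{or} $p\geq q$, but not both simultaneously: it is a single involution that couples the $t$-reduction to a $p\leftrightarrow q$ swap. Having used it to force $t\leq 1/2$, you no longer control the order of $p$ and $q$, so Lemma~\ref{lem:zongming} cannot be invoked as stated. In fact the intermediate inequality you extract from it, $J_t(p,q)\leq J_{1/2}(p,q)$ for $t\leq 1/2$, is false when $p<q$: for instance $J_{0.4}(0.1,0.5)\approx 0.1547 > 0.1528\approx J_{1/2}(0.1,0.5)$ (the maximizer of $t\mapsto J_t(p,q)$ lies strictly below $1/2$ when $p<q$, by Lemma~\ref{lem:t_asymmetric} applied to the swapped pair). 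The final display \eqref{eq:tiamat} is still true because the factor $2$ on the right saves you, but your chain of reasoning does not establish it. Relatedly, your parenthetical ``the constant $2$ is loose'' is not correct in general: taking $p/q\to\infty$, the maximizer $t^*$ tends to $1$ and $J_{t^*}/J_{1/2}\to 2$, so the $2$ is sharp. Two clean fixes: (a) follow the paper and reduce to $p\geq q$ by the symmetry, handle $t\leq 1/2$ via Lemma~\ref{lem:zongming} and $t>1/2$ via Lemma~\ref{lem:t_asymmetric} together with concavity; or (b) avoid the ordering entirely by noting that Lemma~\ref{lem:f_increasing} already gives the lower bound without any order on $p,q$ — indeed $f(t,1-t)\geq f(t,t)=tJ_{1/2}$ for $t\leq 1/2$ and $f(t,1-t)\geq f(1-t,1-t)=(1-t)J_{1/2}$ for $t>1/2$ — while the upper bound $J_t\leq 2J_{1/2}$ follows for all $t,p,q$ from concavity of $t\mapsto J_t$ together with $J_{1-t}\geq 0$.
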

\begin{proof}
Without loss of generality, let $p>q$.
We first consider the case $0<t\leq 1/2$. By Lemma \ref{lem:zongming}, we have
$$2\frac{x_1}{x_1+x_2}J_{1/2}(p,q)\leq J_{\frac{x_1}{x_1+x_2}}(p,q)\leq J_{1/2}(p,q).$$
Let $\frac{x_1}{x_1+x_2}=t$, and we have
\begin{equation}
2t(\sqrt{p}-\sqrt{q})^2\leq J_t(p,q)\leq (\sqrt{p}-\sqrt{q})^2.\label{eq:jiegun}
\end{equation}
Now we consider the case $1/2\leq t<1$. Let $s=1-t$. By Lemma \ref{lem:t_asymmetric} and (\ref{eq:jiegun}), we have
$$J_t(p,q)\geq J_s(p,q)\geq 2s(\sqrt{p}-\sqrt{q})^2=2(1-t)(\sqrt{p}-\sqrt{q})^2.$$
Using (\ref{eq:conc-J}) and (\ref{eq:jiegun}), we have
$$J_t(p,q)\leq 2J_{1/2}(p,q)-J_s(p,q)\leq 2J_{1/2}(p,q)-2s(\sqrt{p}-\sqrt{q})^2=2t(\sqrt{p}-\sqrt{q})^2\leq 2(\sqrt{p}-\sqrt{q})^2.$$
Hence,
\begin{equation}
2(1-t)(\sqrt{p}-\sqrt{q})^2\leq J_t(p,q)\leq 2(\sqrt{p}-\sqrt{q})^2.\label{eq:lolu}
\end{equation}
Combine (\ref{eq:jiegun}) and (\ref{eq:lolu}), and we can derive (\ref{eq:tiamat}) for $p>q$. A symmetric argument leads to the same result for $p<q$. When, $p=q$, the result trivially holds. Thus, the proof for (\ref{eq:tiamat}) is complete.

To prove (\ref{eq:therion}), we use the identity
$$\frac{1}{2t(1-t)}J_t(p,q)=p\frac{1}{1-t}\left(1-\left(\frac{q}{p}\right)^{1-t}\right)+q\frac{1}{t}\left(1-\left(\frac{p}{q}\right)^t\right).$$
By Taylor's theorem, we have
$$\frac{1}{\alpha}(1-x^{\alpha})=1-x+\frac{1}{2}(1-\alpha)(x-1)^2-\frac{1}{6}(\alpha-1)(\alpha-2)\xi^{\alpha-3}(x-1)^3,$$
for some $\xi$ between $x$ and $1$. Thus, using the condition that $\max(p/q,q/p)\leq M$, we have
$$\frac{1}{2t(1-t)}J_t(p,q)\leq \left(1+\frac{2M^4}{3}\right)\frac{(p-q)^2}{\min(p,q)}.$$
Then, we can derive (\ref{eq:therion}) by the fact that $t(1-t)\leq \min(t,1-t)$.
\end{proof}

\section{Proofs of Auxiliary Results}\label{sec:pf-aux}

\begin{proof}[Proof of Lemma \ref{lem:t-lower}]
Note that \eqref{eq:hypo} is a simple vs.~simple hypothesis testing problem.
By the Neyman--Pearson lemma, the optimal test is the likelihood ratio test $\phi$ which rejects $H_0$ if
\begin{eqnarray*}
&& \prod_{i=1}^{m}(\theta_0\theta_ip)^{X_i}(1-\theta_0\theta_ip)^{1-X_i}
\prod_{i=m+1}^{2m}(\theta_0\theta_iq)^{X_i}(1-\theta_0\theta_iq)^{1-X_i}
\\
&& ~~~~~~~~~~< 
\prod_{i=1}^{m}(\theta_0\theta_iq)^{X_i}(1-\theta_0\theta_iq)^{1-X_i}
\prod_{i=m+1}^{2m}(\theta_0\theta_ip)^{X_i}(1-\theta_0\theta_ip)^{1-X_i}.
\end{eqnarray*}
Therefore,
\begin{align*}
\p_{H_0}\phi
&=\p\Bigg(\sum_{i=1}^{m}\Big(X_i\log\frac{q(1-\theta_0\theta_ip)}{p(1-\theta_0\theta_iq)}-\log\frac{1-\theta_0\theta_ip}{1-\theta_0\theta_iq}\Big)
+\sum_{i=m+1}^{2m}\Big(X_i\log\frac{p(1-\theta_0\theta_iq)}{q(1-\theta_0\theta_ip)}-\log\frac{1-\theta_0\theta_iq}{1-\theta_0\theta_ip}\Big)>0\Bigg).
\end{align*}
To establish the desired bound for this quantity, we employ below a refined version of the Cramer--Chernoff argument \cite[Proposition 14.23]{van2000}. 
To this end, 
for any fixed $t>0$, define independent random variables $\{W_i\}_{i=1}^{2m}$ by
\begin{equation*}
\Prob\left(W_i=t\log\frac{q}{p}\right)=\theta_0\theta_ip,\quad \Prob\left(W_i=t\log\frac{1-\theta_0\theta_iq}{1-\theta_0\theta_ip}\right)=1-\theta_0\theta_ip,\quad \mbox{for $i=1,...,m$}, 
\end{equation*}
and
\begin{equation*}
\Prob\left(W_i=t\log\frac{p}{q}\right)=\theta_0\theta_iq,\quad \Prob\left(W_i=t\log\frac{1-\theta_0\theta_ip}{1-\theta_0\theta_iq}\right)=1-\theta_0\theta_iq,\quad \mbox{for $i=m+1,...,2m$}.
\end{equation*}
In addition, let
\begin{equation*}
B_i = \begin{cases}
(\theta_0\theta_ip)^{1-t}(\theta_0\theta_iq)^t+(1-\theta_0\theta_ip)^{1-t}(1-\theta_0\theta_iq)^t, & \quad i=1,\dots,m;\\
(\theta_0\theta_iq)^{1-t}(\theta_0\theta_ip)^t+(1-\theta_0\theta_iq)^{1-t}(1-\theta_0\theta_ip)^t, & \quad i=m+1,\dots, 2m.
\end{cases}
\end{equation*}
We lower bound $P_{H_0}\phi$ by
\begin{eqnarray*}
P_{H_0}\phi &=& \mathbb{P}\left(\sum_{i=1}^{m}W_i+\sum_{i=m+1}^{2m}W_i>0\right) \\
&\geq& \sum_{0<\sum_i w_i<L}\prod_{i=1}^{2m}\Prob(W_i=w_i) \\
&\geq& \pth{\prod_{i=1}^{2m}B_i} e^{-L}  \sum_{0<\sum_i w_i<L}\prod_{i=1}^{2m}\frac{P_i(w_i)e^{w_i}}{B_i} \\
&=&  \pth{\prod_{i=1}^{2m}B_i }e^{-L}\sum_{0<\sum_i w_i<L}\prod_{i=1}^{2m}Q_i(w_i) \\
&=& \pth{\prod_{i=1}^{2m} B_i} e^{-L}\mathbb{Q}\left(0<\sum_{i=1}^{2m} W_i<L\right),
\end{eqnarray*}
where
$$Q_i\left(W_i=t\log\frac{q}{p}\right)=\frac{(\theta_0\theta_ip)^{1-t}(\theta_0\theta_iq)^t}{B_i},\quad Q_i\left(W_i=t\log\frac{1-\theta_0\theta_iq}{1-\theta_0\theta_ip}\right)=\frac{(1-\theta_0\theta_ip)^{1-t}(1-\theta_0\theta_iq)^t}{B_i}$$
for $i=1,...,m$ and
$$Q_i\left(W_i=t\log\frac{p}{q}\right)=\frac{(\theta_0\theta_iq)^{1-t}(\theta_0\theta_ip)^t}{B_i},\quad Q_i\left(W_i=t\log\frac{1-\theta_0\theta_ip}{1-\theta_0\theta_iq}\right)=\frac{(1-\theta_0\theta_iq)^{1-t}(1-\theta_0\theta_ip)^t}{B_i}$$
for $i=m+1,...,2m$. 
We have also used the abbreviations $P_i(w_i) = \Prob(W_i=w_i)$ and $Q_i(w_i) = \bbQ(W_i=w_i)$.

To obtain the desired lower bound, we set $t$ to be the minimizer of $\prod_{i=1}^{2m}B_i$.
Since the minimizer is a stationary point, it satisfies
\begin{equation}
\sum_{i=1}^{2m}\mathbb{E}_{\bbQ}W_i=0.\label{eq:t-mean}
\end{equation}
For any $t,a,b\in(0,1)$, recall the definition of $J_t(a,b)$ in \eqref{eq:Jtpq}.
By \prettyref{lem:J_expand}, we have
\begin{equation}
J_t(1-a,1-b)\leq CaJ_t(a,b),\label{eq:lvlv}
\end{equation}
where $C$ only depends on the ratio $a/b$.
Therefore, under the condition $a\asymp b=o(1)$, \eqref{eq:lvlv} implies
\begin{equation}
\log\left(1-\frac{1}{2} J_t(a,b)-\frac{1}{2}J_t(1-a,1-b)\right)\geq -\frac{1}{2}(1+\eta)J_t(a,b),\label{eq:clear}
\end{equation}
for some $\eta=o(1)$ independent of $t$.
Using (\ref{eq:clear}), under the assumption that $1<p/q=O(1)$, we have
\begin{eqnarray}
\nonumber\prod_{i=1}^{2m} B_i &\geq& \exp\left(-\frac{1+\eta}{2}\sum_{i=1}^{m}J_t(\theta_0\theta_iq,\theta_0\theta_ip)-\frac{1+\eta}{2}\sum_{i=m+1}^{2m}J_t(\theta_0\theta_ip,\theta_0\theta_iq)\right) \\
\label{eq:jingran} &=& \exp\left(-(1+\eta){\theta_0 m}
\left(p+q-p^{1-t}q^t-q^{1-t}p^t\right)
\right).
\end{eqnarray}
Hence,
\begin{eqnarray*}
\min_{0\leq t\leq 1}\prod_{i=1}^{2m}B_i &\geq& \exp\left(-(1+\eta){\theta_0 m}\max_{0\leq t\leq 1}\left(p+q-p^{1-t}q^t-q^{1-t}p^t\right)\right) \\
&=& \exp\left(-(1+\eta){\theta_0 m}(\sqrt{p}-\sqrt{q})^2\right).
\end{eqnarray*}


We now turn to lower bounding $e^{-L}\mathbb{Q}\left(0<\sum_{i=1}^{2m} W_i<L\right)$ with $t$ satisfying (\ref{eq:t-mean}). 
To this end, we first calculate the variances of the $W_i$'s.
For $i=1,...,m$, there exists some constant $C>0$ such that
\begin{eqnarray*}
\text{Var}_{\bbQ}(W_i) &\leq& \mathbb{E}_{\bbQ}(W_i^2) \\
&\leq& \left(t\log\frac{p}{q}\right)^2Q_i\left(t\log\frac{q}{p}\right) + \left(t\log\frac{1-\theta_0\theta_iq}{1-\theta_0\theta_ip}\right)^2Q_i\left(t\log\frac{1-\theta_0\theta_iq}{1-\theta_0\theta_ip}\right) \\
&\leq& C\theta_0\theta_ip\left(\log(\theta_0\theta_ip)-\log(\theta_0\theta_iq)\right)^2+\left(\log(1-\theta_0\theta_ip)-\log(1-\theta_0\theta_iq)\right)^2 \\
&\leq& C\theta_0\theta_ip \frac{(\theta_0\theta_ip-\theta_0\theta_iq)^2}{(\theta_0\theta_iq)^2}+C(\theta_0\theta_ip-\theta_0\theta_iq)^2 \\
&\leq& C\frac{\theta_0\theta_i(p-q)^2}{p}.
\end{eqnarray*}
In addition, we have
\begin{equation*}
\mathbb{E}_{\bbQ}(W_i^2)\geq  \left(t\log\frac{p}{q}\right)^2Q_i\left(t\log\frac{q}{p}\right)\gtrsim \frac{\theta_0\theta_i(p-q)^2}{p},	\quad
\mbox{and} \quad
(\mathbb{E}W_i)^2=o\left(\frac{\theta_0\theta_i(p-q)^2}{p}\right).
\end{equation*}
Similar bounds hold for $W_i$, $i=m+1,...,2m$.
Thus, we obtain that
\begin{equation*}
\sum_{i=1}^{2m}\text{Var}_{\bbQ}(W_i)\asymp \frac{\theta_0 m (p-q)^2}{p}.	
\end{equation*}
Note that with $t\in [\gamma,1-\gamma]$ and $p/q=O(1)$, the value of $W_i$ is bounded by constant, for any $i\in[2m]$. 
Under the assumption that $\theta_0 m(\sqrt{p}-\sqrt{q})^2\rightarrow\infty$, we have $\sum_{i=1}^{2m}\text{Var}_{\bbQ}(W_i)\rightarrow\infty$, implying the indicator function $\indc{|W_i-\mathbb{E}W_i|>\epsilon\sqrt{\sum_{i=1}^{2m}\text{Var}_{\bbQ}(W_i)} }$ goes to 0 for every $i$.

Thus
\begin{equation*}
\lim_{m\rightarrow\infty}
\sum_{i=1}^{2m}\mathbb{E}(W_i-\mathbb{E}W_i)^2\indc{|W_i-\mathbb{E}W_i|>\epsilon\sqrt{\sum_{i=1}^{2m}\text{Var}_{\bbQ}(W_i)} }=0,	
\end{equation*}
for any constant $\epsilon>0$.
Together with \eqref{eq:t-mean}, the Lindeberg condition implies that under $\bbQ$, $\frac{\sum_{i=1}^{2m}W_i}{\sqrt{\sum_{i=1}^{2m}\text{Var}_{\bbQ}(W_i)}}$ converges to $N(0,1)$.
Taking $L=\sqrt{\sum_{i=1}^{2m}\text{Var}_{\bbQ}(W_i)}$, we have that for any $\eta = o(1)$, 
\begin{equation*}
e^{-L}\mathbb{Q}\left(0<\sum_{i=1}^{2m}W_i<L\right)\geq \exp\left(-\eta \theta_0m(\sqrt{p}-\sqrt{q})^2\right)	
\end{equation*}
for sufficiently large values of $m$.
This completes the proof when $\theta_0m(\sqrt{p}-\sqrt{q})^2\rightarrow\infty$.

When $\theta_0 m (\sqrt{p}-\sqrt{q})^2=O(1)$, then we have
\begin{eqnarray*}
\inf_{\phi}(P_{H_0}\phi+P_{H_1}(1-\phi)) &=& \int dP_{H_0}\wedge dP_{H_1} \\
&\geq& \frac{1}{2}\left(\int\sqrt{dP_{H_0}dP_{H_1}}\right)^2 \\
&=& \frac{1}{2}\left(\prod_{i=1}^{2m}\left(\theta_0\theta_i\sqrt{pq}+\sqrt{(1-\theta_0\theta_ip)(1-\theta_0\theta_iq)}\right)\right)^2 \\
&\geq& \frac{1}{2}\exp\left(-(2+\eta)\theta_0 m(\sqrt{p}-\sqrt{q})^2\right) \\
&\geq& c.
\end{eqnarray*}
This completes the proof.
\end{proof}

\begin{proof}[Proof of Lemma \ref{lem:t-upper}]
We bound $P_{H_0}\phi$ by
\begin{eqnarray*}
P_{H_0}\phi &\leq& \left(\prod_{i=n/2+1}^n\mathbb{E}e^{tX_i}\right)\left(\prod_{i=1}^{n/2}\mathbb{E}e^{-tX_i}\right) \\
&=& \exp\left(\sum_{i=n/2+1}^n\log\left(1-\theta_0\theta_iq+\theta_0\theta_iqe^t\right)+\sum_{i=1}^{n/2}\log\left(1-\theta_0\theta_ip+\theta_0\theta_ipe^{-t}\right)\right) \\
&\leq& \exp\left(\sum_{i=n/2+1}^n\left(-\theta_0\theta_iq+\theta_0\theta_iqe^t\right)+\sum_{i=1}^{n/2}\left(-\theta_0\theta_ip+\theta_0\theta_ipe^{-t}\right)\right) \\
&=& \exp\left(-\frac{\theta_0n}{2}\left(p+q-pe^{-t}-qe^t\right)\right) \\
&=& \exp\left(-\frac{\theta_0n}{2}(\sqrt{p}-\sqrt{q})^2\right),
\end{eqnarray*}
where we have set $e^t=\sqrt{p/q}$. The same bound can be established for $P_{H_1}(1-\phi)$.
\end{proof}

\begin{proof}[Proof of Lemma \ref{lem:t-lower-aprox}]
The proof is very similar to that of Lemma \ref{lem:t-lower}. Therefore, we only sketch the difference. Without loss of generality, let $\theta_1\geq \theta_2\geq...\geq\theta_m$, $\theta_{m+1}\geq\theta_{m+2}\geq...\geq\theta_{m+m_1}$, and $m\leq m_1$. Then, we have
$$\inf_{\phi}\left(P_{H_0}\phi+P_{H_1}(1-\phi)\right)\geq \inf_{\phi}\left(P_{\bar{H}_0}\phi+P_{\bar{H}_1}(1-\phi)\right),$$
where $\bar{H}_0$ and $\bar{H}_1$ correspond to the following two hypotheses.
\begin{equation*}
\begin{aligned}
& \bar{H}_0: X\sim \bigotimes_{i=1}^{m}\text{Bern}\left(\theta_0\theta_ip\right) \otimes \bigotimes_{i=m+1}^{2m}\text{Bern}\left(\theta_0\theta_iq\right) \\
& \quad \quad \quad \mbox{vs.}\quad
\bar{H}_1: X\sim \bigotimes_{i=1}^{m}\text{Bern}\left(\theta_0\theta_i q\right) \otimes \bigotimes_{i=m+1}^{2m}\text{Bern}\left(\theta_0\theta_i p\right).	
\end{aligned}
\end{equation*}
Bounding $\inf_{\phi}\left(P_{\bar{H}_0}\phi+P_{\bar{H}_1}(1-\phi)\right)$ is handled by the proof of Lemma \ref{lem:t-lower} except that we do not have the relation (\ref{eq:normalize-test}) exactly. This slightly change the derivation of (\ref{eq:jingran}), as we will illustrate below. By the definition of $J_t(\cdot,\cdot)$, we have
\begin{eqnarray*}
&& \frac{1}{2}\left(\sum_{i=1}^nJ_t(\theta_0\theta_iq,\theta_0\theta_ip)+\sum_{i=m+1}^{2m}J_t(\theta_0\theta_ip,\theta_0\theta_iq)\right) \\
&=& \left(\theta_0\sum_{i=1}^m\theta_i\right)(tq+(1-t)p-q^tp^{1-t}) + \left(\theta_0\sum_{i=m+1}^{2m}\theta_i\right)(tp+(1-t)p-p^tq^{1-t}) \\
&=& \theta_0m(p+q-p^{1-t}q^t-q^{1-t}p^t)+\theta_0m\left|\frac{1}{m}\sum_{i=1}^m\theta_i-1\right|(tq+(1-t)p-q^tp^{1-t}) \\
&& + \theta_0m\left|\frac{1}{m}\sum_{i=m+1}^{2m}\theta_i-1\right|(tp+(1-t)p-p^tq^{1-t}) \\
&\leq& \theta_0m(p+q-p^{1-t}q^t-q^{1-t}p^t)+C\eta\theta_0m(\sqrt{p}-\sqrt{q})^2,
\end{eqnarray*}
for some $\eta=o(1)$. The last inequality uses Lemma \ref{lem:J_expand} and the fact that $\delta=o(1)$. Since the term $C\eta\theta_0m(\sqrt{p}-\sqrt{q})^2$ is of smaller order compared with the targeted exponent, the desired result can be derived following the remaining proof of Lemma \ref{lem:t-lower}.
\end{proof}

\begin{proof}[Proof of Lemma \ref{lem:S-anderson}]
For each $u\in[k]$, we define
$$\mathcal{C}_u=\left\{i\in z^{-1}(u)\cap S_0^c: \norm{\tilde{V}_i-V_i}<b\right\}.$$
Following \cite{chen2015convexified}, we divide the sets $\{\mathcal{C}_u\}_{u\in[k]}$ into three groups. Define
\begin{eqnarray*}
R_1 &=& \{u\in[k]:\mathcal{C}_u=\varnothing\}, \\
R_2 &=& \{u\in[k]:\mathcal{C}_u\neq\varnothing, \forall i,j\in \mathcal{C}_u, \tilde{z}(i)=\tilde{z}(j)\}, \\
R_3 &=& \{u\in[k]:\mathcal{C}_u\neq \varnothing,\exists i,j\in\mathcal{C}_u, \text{s.t. }i\neq j, \tilde{z}(i)\neq\tilde{z}(j)\}.
\end{eqnarray*}
Then, it is easy to see that $\cup_{u\in[k]}\mathcal{C}_u=S_0^c\backslash S^c$ and $\mathcal{C}_u\cap\mathcal{C}_v=\varnothing$ for any $u\neq v$. Suppose there exists some $i\in\mathcal{C}_u$ and $j\in\mathcal{C}_v$ such that $u\neq v$ but $\tilde{z}(i)=\tilde{z}(j)$. Then, by the fact $\tilde{V}_i=\tilde{V}_j$, we have
$$\norm{V_i-V_j}\leq \norm{V_i-\tilde{V}_i}+\norm{V_j-\tilde{V}_j}<2b,$$
contradicting (\ref{eq:tttgap}). This means $\tilde{z}(i)$ and $\tilde{z}(j)$
 take different values if $i$ and $j$ are not in the same $\mathcal{C}_u$'s. By the definition of $R_2$, the nodes in $\cup_{u\in R_2}\mathcal{C}_u$ have the same partition induced by $z$ and $\tilde{z}$. Therefore,
 $$\min_{\pi\in\Pi_k}\sum_{\{i:\hat{z}(i)\neq \pi(z(i))\}}\theta_i\leq\sum_{i\in S_0}\theta_i+\sum_{i\in S}\theta_i+\sum_{i\in \cup_{u\in R_3}\mathcal{C}_u}\theta_i.$$
 It is sufficient to bound $\sum_{i\in \cup_{u\in R_3}\mathcal{C}_u}\theta_i$. By the definition of $R_3$, we observe that each $\mathcal{C}_u$ for some $u\in R_3$ contains at least two different labels given by $\tilde{z}$. Thus
 we have $|R_2|+2|R_3|\leq k$. Moreover, since $k=|R_1|+|R_2|+|R_3|$, we have $|R_3|\leq |R_1|$. This leads to
 \begin{eqnarray*}
 \sum_{i\in \cup_{u\in R_3}\mathcal{C}_u}\theta_i &\leq& |R_3|(1+\delta)\frac{\beta n}{k} \\
 &\leq& |R_1|(1+\delta)\frac{\beta n}{k} \\
 &\leq& \frac{1+\delta}{1-\delta}\beta^2\sum_{i\in\cup_{u\in R_1}\{i\in[n]:z(i)=u\}}\theta_i \\
 &\leq& \frac{1+\delta}{1-\delta}\beta^2\sum_{i\in S}\theta_i\\
 &\leq& 2\beta^2\sum_{i\in S}\theta_i.
 \end{eqnarray*}
 This completes the proof.
\end{proof}

\begin{proof}[Proof of Lemma \ref{lem:P-hat-bound}]
Define the matrix $P'\in\mathbb{R}^{n\times n}$ by $P'_{ij}=\theta_i\theta_jB_{z(i)z(j)}$ for each $i,j\in[n]$. Then, $P'$ has rank at most $k$ and differs from $P$ only by the diagonal entries.
By the definition of $\hat{P}$, we have $\fnorm{\hat{P}-T_{\tau}(A)}^2\leq \fnorm{P'-T_{\tau}(A)}^2$. After rearrangement, we have
\begin{eqnarray*}
\fnorm{\hat{P}-P}^2 &\leq& 2\left|\iprod{\hat{P}-P'}{T_{\tau}(A)-P}\right|+\fnorm{P'-P}^2 \\
&\leq& 2\fnorm{\hat{P}-P'}\sup_{\{K:\fnorm{K}=1,\text{rank}(K)\leq 2k\}}\left|\iprod{K}{T_{\tau}(A)-P}\right| + \fnorm{P'-P}^2 \\
&\leq& \frac{1}{4}\fnorm{\hat{P}-P'}^2+4\sup_{\{K:\fnorm{K}=1,\text{rank}(K)\leq 2k\}}\left|\iprod{K}{T_{\tau}(A)-P}\right|^2+ \fnorm{P'-P}^2 \\
&\leq& \frac{1}{2}\fnorm{\hat{P}-P}^2+\frac{3}{2}\fnorm{P'-P}^2+4\sup_{\{K:\fnorm{K}=1,\text{rank}(K)\leq 2k\}}\left|\iprod{K}{T_{\tau}(A)-P}\right|^2.
\end{eqnarray*}
Therefore,
\begin{equation}
\fnorm{\hat{P}-P}^2\leq 3\fnorm{P'-P}^2+8\sup_{\{K:\fnorm{K}=1,\text{rank}(K)\leq 2k\}}\left|\iprod{K}{T_{\tau}(A)-P}\right|^2.\label{eq:vardy}
\end{equation}
Apply singular value decomposition to $K$ and we get $K=\sum_{l=1}^{2k}\lambda_lu_lu_l^T$. Then,
$$\left|\iprod{K}{T_{\tau}(A)-P}\right|\leq \sum_{l=1}^{2k}|\lambda_l||u_l^T(T_{\tau}(A)-P)u_l|\leq \opnorm{T_{\tau}(A)-P}\sum_{l=1}^{2k}|\lambda_l|\leq \sqrt{2k}\opnorm{T_{\tau}(A)-P}.$$
By Lemma 5 of \cite{gao2015achieving}, $\opnorm{T_{\tau}(A)-P}\leq C\sqrt{n\alpha p\norm{\theta}_{\infty}^2+1}$ with probability at least $1-n^{-C'}$, where the constant $C'$ can be made arbitrarily large. Hence,
$$8\sup_{\{K:\fnorm{K}=1,\text{rank}(K)\leq 2k\}}\left|\iprod{K}{T_{\tau}(A)-P}\right|^2\leq C_1k(n\alpha p\norm{\theta}_{\infty}^2+1),$$
with probability at least $1-n^{-C'}$
Moreover,
$$3\fnorm{P'-P}^2=3\sum_{i=1}^n\theta_i^2B_{z(i)z(i)}^2\leq 3\alpha^2p^2\norm{\theta}_{\infty}n(1+\delta)\leq C_2\alpha^2p\norm{\theta}_{\infty}^2n.$$
Using (\ref{eq:vardy}), the proof is complete by absorbing $\alpha$ into the constant.
\end{proof}

%
%

\bibliographystyle{abbrvnat}
\bibliography{dcbm}

\end{document}